\documentclass[a4paper, 12pt, oneside]{amsart}
\usepackage[T1]{fontenc}
\usepackage[utf8]{inputenc}
\usepackage[dvipsnames]{xcolor}
\usepackage{amsbsy,amsmath,amsthm,amssymb,hyperref,paralist}
\usepackage[draft,multiuser,layout={inline,index}]{fixme}
\usepackage[edge-length=1cm]{dynkin-diagrams}
\usepackage{xfrac}
\usepackage{faktor}
\usepackage{tikz-cd}
\usepackage{tikz}
\usepackage{subcaption}
\usepackage{accents}
\usepackage{paralist}
\usepackage{scalerel}
\usepackage{stackengine}
\usepackage{placeins}
\usepackage{mathtools}
\usepackage{booktabs}
\usepackage{microtype}

\keywords{deformation theory, first-oder deformations, seminormal, toric variety, Minkowski decomposition, algebraic surfaces}
\subjclass{14M25, 14B07; 14B10, 14J17, 13F45, 14M05}

\usetikzlibrary{positioning}

\fxsetup{theme=color}
\FXRegisterAuthor{a}{anp}{AS}
\FXRegisterAuthor{h}{ans}{HS}

\usepackage[
hyperref=true,backref=true,
backend=bibtex,
style=alphabetic, % draft
citestyle=alphabetic, % draft
sorting=nyt,
url = false,
doi = false,
isbn = false,
giveninits=true
]{biblatex}  % bibliográfia
\usepackage{csquotes}
\newtheorem{maintheorem}{Theorem}
\newtheorem{theorem}{Theorem}[section]
\newtheorem{lemma}[theorem]{Lemma}
\newtheorem{proposition}[theorem]{Proposition}

\newtheorem{corollary}[theorem]{Corollary}
\newtheorem{remark}[theorem]{Remark}

\theoremstyle{definition}

\newtheorem{definition}[theorem]{Definition}
\newtheorem{example}[theorem]{Example}

\newcommand{\CC}{\mathbb{C}}

\newcommand{\RR}{\mathbb{R}}

\newcommand{\QQ}{\mathbb{Q}}
\newcommand{\ZZ}{\mathbb{Z}}

\newcommand{\A}{\mathbb{A}}

\DeclareMathOperator{\spec}{Spec}

\DeclareMathOperator{\supp}{supp}

\DeclareMathOperator{\relint}{relint}

\DeclareMathOperator{\Hom}{Hom}

\DeclareMathOperator{\Der}{Der}

\DeclareMathOperator{\Rel}{L}

\title[First-order deformations of seminormal toric varieties]{On the first-order deformations of seminormal affine toric varieties}
\author{Konstantin Egert and Hendrik Sü{\ss}}

\begin{document}
\maketitle

\begin{abstract}
  We provide a general description of the module $T^1_X$ of first-order infinitesimal deformations of a not necessarily normal affine toric variety $X$ with a special emphasis on  seminormal varieties satisfying Serre's $(S_2)$ condition. In the surface case we are able to give a very detailed and concrete picture which includes the dimension of each homogeneous component.
\end{abstract}

\section{Introduction}
The purpose of this paper is to take first steps towards an understanding of the deformation theory of seminormal toric varieties. An affine variety is called seminormal if every rational function $f$, for which $f^2$ and $f^3$ are regular,  is itself regular. There is also an obvious notion of seminormalisation. Over the complex numbers the seminormalisation is known to be a homeomorphism on the level of Euclidean topology. Hence, from a topological point of view it is sufficient to study seminormal singularities. Similarly when studying F-split singularities in positive characteristic it is sufficient to consider seminormal ones by \cite{zbMATH03542542}. Seminormal affine toric varieties have been studied e.g. in  \cite{zbMATH01584951,zbMATH05072890,zbMATH07767754, arXiv:2501.12091}. On the other hand, deformations of non-normal toric surfaces singularities and their Milnor fibres have  been considered in the context of fillings for contact structures on lens spaces  e.g. in \cite{zbMATH05183764,zbMATH05183778,sandorthesis}.

For now, our aim is to provide a description of the vector space $T^1_X$ of first order deformations of a seminormal normal affine toric variety $X$. In the normal case this has been done by Altmann in \cite{zbMATH00681806}. For a normal affine toric variety $X=X_\sigma$ associated to a polyhedral cone $\sigma$ the vector space $T^1_X$ is graded by the character lattice of the torus and the homogeneous components  can be expressed in terms of linear relations between elements of the Hilbert basis of the weight monoid consisting of the lattice points inside $\sigma^\vee$.
Altmann's approach uses normality at a crucial point and seems to fail in the non-normal setting. Here, we use a more direct approach, but arrive at a general formula very much in the same spirit as the one given in \cite{zbMATH00681806}.

Assume for now, that we work over a field of characteristic $0$. For a subset $E=\{w_0,\ldots,w_{r+1}\}$  of the character lattice we consider relations $q \colon \sum_{i} q^+_i w_i = \sum_i q^-_iw_i$ between the generators with coefficients $q_i^\pm \in \ZZ_{\geq 0}$. Those form a finitely generated  abelian group, which we denote by $\Rel(E)$. We set $\bar q = \sum_{i} q^+_i w_i$ ($=\sum_i q^-_iw_i$).
\begin{maintheorem}[see {Theorem~\ref{thm:t1-general-formula}}]
  Assume $X=\spec k[S]$ is a not necessarily normal affine toric variety and $E\subset S$ is a generating set of the corresponding weight monoid $S$. Then the homogeneous component of first-order deformations of degree $u$ is given by
   \[(T^1_X)_u \cong \left(\Rel(E^u_S) /\langle q \mid \bar q + u \notin S \rangle \right)^*\otimes k\]
   where $E^u_S=\{w \in E \mid u+w \notin S\}$.
 \end{maintheorem}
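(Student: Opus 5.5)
We compute $T^1_X$ from a presentation of $A=k[S]$. Let $E=\{w_0,\dots,w_{r+1}\}$, $P=k[x_0,\dots,x_{r+1}]$, and let $\pi\colon P\to A$, $x_i\mapsto \chi^{w_i}$. The kernel $I$ is the toric ideal. It is generated by the binomials $f_q=x^{q^+}-x^{q^-}$ with $q\in\Rel(E)$, and it is $M=\ZZ^{r+2}$-graded through $\deg x_i=w_i$. The standard exact sequence
\[\Der_k(P,A)\to \Hom_A(I/I^2,A)\to T^1_X\to 0\]
is graded by the character lattice, so we work in a fixed degree $u$. Homogeneous elements are as follows.
\begin{itemize}
\item A homogeneous derivation of degree $u$ is $x_i\mapsto c_i\chi^{w_i+u}$, where $c_i\in k$ and $c_i=0$ whenever $w_i+u\notin S$.
\item A homogeneous $\varphi\in\Hom(I/I^2,A)_u$ is given by scalars $\varphi(q)$ with $\varphi(f_q)=\varphi(q)\chi^{\bar q+u}$, and $\varphi(q)=0$ when $\bar q+u\notin S$.
\end{itemize}

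\emph{Step 1: describe $\Hom(I/I^2,A)_u$.} We first check that $\varphi$ is determined by, and corresponds exactly to, a map $q\mapsto\varphi(q)$ that is additive on $\Rel(E)$ and vanishes on relations with $\bar q+u\notin S$. Additivity comes from the syzygies among the binomials. For example,
\[f_{q+q'}=x^{q'^+}f_q+x^{q^-}f_{q'},\]
which gives $\varphi(q+q')=\varphi(q)+\varphi(q')$ after multiplying by the appropriate characters, since $A$ is a domain with characters linearly independent. Relations that differ by a common monomial factor, $x^a f_q$, give the same value. Conversely, such an additive map defines a homomorphism, because the module of syzygies of the $f_q$ is generated by these "shift" and "sum" type relations. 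This is where the grading and the domain property of $k[S]$ get used.

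One subtlety is that $\varphi(q)\chi^{\bar q+u}$ must make sense even when $\bar q+u\notin S$ for $q$ itself but $\bar q+a+u\in S$ for a shifted $q$. Additivity and the shift compatibility force consistency here, so we work with values in $k$ attached to relations whose degree lies in $S-u$. Hence
\[\Hom(I/I^2,A)_u\cong\Big(\Rel(E)/\langle q\mid \bar q+u\notin S\rangle\Big)^*\otimes k.\]

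\emph{Step 2: identify the image of the derivations.} A derivation $D=(c_i)$ induces $\varphi_D(q)=\sum_i (q^+_i-q^-_i)c_i$. Here $c_i$ is supported on $E\setminus E^u_S$, that is, on those $w_i$ with $w_i+u\in S$. So the image consists of the functionals on $\Rel(E)\otimes k\subset k^{E}$ that are restrictions of linear forms supported on $E\setminus E^u_S$. Quotienting by these kills exactly the information carried by coordinates outside $E^u_S$. The natural comparison is the map $\Rel(E^u_S)\to\Rel(E)$, which extends a relation by zero. Restricting functionals along this map gives
\[(T^1_X)_u\to \Big(\Rel(E^u_S)/\langle q\mid\bar q+u\notin S\rangle\Big)^*\otimes k.\]

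\emph{Step 3: show this map is an isomorphism.}
\begin{itemize}
\item \emph{Injectivity.} If $\varphi$ vanishes on relations supported in $E^u_S$, choose the $c_i$ for $i\notin E^u_S$ to match $\varphi$. Since we are in characteristic $0$, rational linear algebra on the lattice $\Rel(E)\otimes\QQ$ lets us split off the coordinates outside $E^u_S$. We then check that $\varphi-\varphi_D$ vanishes. For this we must show that every relation, modulo relations with $\bar q+u\notin S$ and modulo the span of coordinate-type contributions, reduces to one supported on $E^u_S$.
\item \emph{Surjectivity.} Extend a functional on $\Rel(E^u_S)$ to $\Rel(E)$, compatibly with the vanishing condition, by declaring suitable values on coordinates in $E\setminus E^u_S$.
\end{itemize}

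The main obstacle is the compatibility in Step 3. We must show that when a relation involves some $w_i$ with $w_i+u\in S$, it can be handled by the derivation part without conflicting with the condition $\bar q+u\notin S$. Concretely, if $q_i^+>0$ for such an $i$, then $\bar q+u=(w_i+u)+(\bar q-w_i)\in S$. So such relations are never killed by the vanishing condition. This observation is what makes the splitting consistent, and I would build the argument around it.
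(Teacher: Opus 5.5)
Your proposal follows the paper's proof. $\Hom(I/I^2,A)_u$ is identified with functionals on $\Rel(E)/\langle q\mid \bar q+u\notin S\rangle$, the degree-$u$ derivations become the coordinate functionals $p_i$ with $w_i+u\in S$, and your closing observation (relations with $\bar q+u\notin S$ are supported in $E^u_S$, using $\bar q=\pi(q^-)$ for negative coefficients) is exactly Lemma~\ref{lem:involved-generators}, which collapses the quotient to the dual of $\Rel_k(E^u_S)/\langle q\mid \bar q+u\notin S\rangle_k$. With that in hand, Step~3 is pure linear algebra: injectivity only needs that a functional vanishing on $\Rel_k(E)\cap k^{E^u_S}$ extends to one vanishing on $k^{E^u_S}$, so your extra reduction there is unnecessary; and your Step~1 syzygy claim for the full family of binomials $f_q$ can be checked elementarily degree by degree, in place of the paper's Gr\"obner/Schreyer argument (strictly, $x^{q'^+}f_q+x^{q^-}f_{q'}=x^c f_{q+q'}$ with $c=\min(q^++q'^+,q^-+q'^-)$, so you also need the shift relation there).
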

 
If we additionally assume  Serre's $(S_2)$ property and  seminormality, our formula simplifies. In this form it allows us to completely settle the case of a seminormal toric surface singularity $X$. Here, the normalisation $\tilde X$ is given by a two-dimensional cone $\sigma \subset \RR^2$ spanned by primitive lattice elements $v_1, v_2 \in \sigma \cap \ZZ^2$ and its dual cone $\sigma^\vee$. The restriction of the normalisation map to each of the two torus invariant curves is a covering of index $\ell_1$ and $\ell_2$ respectively. Let us number the lattice points on the
line $\{ u  \mid \langle v_i, u \rangle=j\}$ consecutively, where we choose the $0$-point  in such a way that the lattice points with positive numbers are exactly the ones which pair positively with the other ray generator. Let us write $u_{i}^{j}(m)$ for the $m$th lattice point within this numbering. We then have a  very explicit description of the dimensions of the graded components $(T^1_X)_u$ in terms of the dimensions of the corresponding graded components $(T^1_{\tilde X})_u$, of the space of first-order deformations of the normalisation.
\begin{maintheorem}[see Theorem~\ref{thm:T1dim-surface}] Let $X$ be a seminormal affine toric surface
  with covering indices $\ell_1,\ell_2 \geq 3$ over the non-normal locus and normalisation $\tilde X$. Then we have 
      \begin{equation*}
        \dim(T^1_{X})_u= d_1(u)+d_2(u)+\dim (T_{\tilde X}^1)_u
      \end{equation*}
      with
      \[d_i(u)=
        \begin{cases}
          1 & u=-u_{i}^{0}(m),\, 1\leq m < \ell_i\\
          \ell_i-2 & u=-u_{i}^{1}(m),\, m \leq 0\\
          \ell_i-1 & u=-u_{i}^{1}(1)\\
          \ell_i-m + 1& u=-u_{i}^{1}(m),\, 2\leq m < \ell_i\\
          1 & u=-u_{i}^{1}(\ell_i),\; \tilde X \not\cong \mathbb{A}^2\\
          0 & \text{otherwise}
        \end{cases}
      \]
      for $i=1,2$.           
\end{maintheorem}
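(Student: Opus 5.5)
The plan is to deduce the surface formula from Theorem~\ref{thm:t1-general-formula}, in the simplified form it takes for seminormal $(S_2)$ varieties. Throughout, $X$ is compared with its normalisation $\tilde X$.

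First I fix the combinatorial model. A seminormal $(S_2)$ toric surface has weight monoid
\[S=\{u\in\sigma^\vee\cap\ZZ^2 \mid \langle v_i,u\rangle=0 \Rightarrow u\in L_i,\ i=1,2\}.\]
Here $L_i\subset v_i^\perp\cap\ZZ^2$ is a sublattice of index $\ell_i$, so $S$ is obtained from $\tilde S=\sigma^\vee\cap\ZZ^2$ by deleting, on each boundary ray $\tau_i=\sigma^\vee\cap v_i^\perp$, the lattice points outside $L_i$. Next I choose a generating set $E$ of $S$: the Hilbert basis of $\tilde S$ with its two boundary generators $u_i^0(1)$ replaced by $u_i^0(\ell_i)$, together with the points $u_i^1(m)$, $1\le m\le \ell_i$ (after translation), which are needed to generate the first layer. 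The hypothesis $\ell_i\ge 3$ ensures that these generator sets near the two boundary rays behave uniformly.

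Second, I split the degrees $u$. For $\langle v_1,u\rangle\ge 2$ and $\langle v_2,u\rangle\ge 2$ after a suitable shift, and more generally whenever $u+w\notin S$ is equivalent to $u+w\notin\tilde S$ for all relevant $w$, the set $E^u_S$ and the relations killed by $\bar q+u\notin S$ should match those of the normal case. In that range Altmann's computation gives $(T^1_X)_u\cong(T^1_{\tilde X})_u$ and $d_i(u)=0$. The remaining degrees are those with $\langle v_i,u\rangle\in\{0,-1\}$, i.e.\ $u=-u_i^0(m)$ or $u=-u_i^1(m)$; these are exactly the degrees where the deleted boundary points interact with the translate $u+S$. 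Since $d_1$ and $d_2$ are supported near different rays, I will argue that the relations in $\Rel(E^u_S)$ decompose as a direct sum of a part living near $\tau_1$, a part near $\tau_2$, and the normal part. Then the dimension is additive, which gives $d_1(u)+d_2(u)+\dim(T^1_{\tilde X})_u$.

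Third, for each exceptional degree I compute $\Rel(E^u_S)$ modulo the relations $q$ with $\bar q+u\notin S$ explicitly.
\begin{itemize}
\item For $u=-u_i^0(m)$ with $1\le m<\ell_i$: $E^u_S$ consists of the boundary generator together with generators pairing to $0$ or $1$ with $v_i$. The surviving relations form a rank-one space spanned by a relation among the first-layer generators.
\item For $u=-u_i^1(m)$: I count, on the layer $\langle v_i,\cdot\rangle=1$, the generators $u_i^1(k)$ with $u+u_i^1(k)\notin S$. These are the $k$ for which $k-m\not\equiv0 \pmod{\ell_i}$ in the relevant range. I then count the independent relations among them whose sum $\bar q$ lands on the boundary ray at a point of $L_i$ after adding $u$. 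This produces the counts $\ell_i-2$, $\ell_i-1$ and $\ell_i-m+1$.
\item The last case, $u=-u_i^1(\ell_i)$, contributes one extra relation involving the opposite ray. That relation exists only if $\tilde X\not\cong\mathbb{A}^2$, i.e.\ only if the Hilbert basis has an interior element.
\end{itemize}

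The main obstacle is this third step: bookkeeping the lattice of relations modulo those with $\bar q+u\notin S$ uniformly in $m$, especially the boundary cases $m=1$ and $m=\ell_i$, and justifying the splitting of relations into local pieces. For the rank count I would first try to exhibit an explicit basis of the relevant relations: the ``consecutive'' relations $u_i^1(k)+u_i^1(k')=u_i^1(k+1)+u_i^1(k'-1)$ together with the relations expressing $\ell_i$ copies of the boundary step. Working in characteristic $0$ (so that tensoring with $k$ kills no torsion issues), I would then check linear independence by evaluating on suitable coordinates.
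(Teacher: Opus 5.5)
Your framework (Theorem~\ref{thm:t1-general-formula} with the $(S_2)$ simplification, and the generating set of Lemma~\ref{lem:generating-set}) is the right one. However, the step you flag as the main obstacle is where all the content lies, and the proposal gives no mechanism for it. You assert that $\Rel_k(E^u_S)$ modulo the span of the relations with $\bar q+u\notin S$ splits as a direct sum of a piece near each ray and a ``normal part'' of dimension $\dim(T^1_{\tilde X})_u$.

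Step~2 cannot supply the normal part. $E$ does not generate $\tilde S$, so nothing literally matches the normal computation. At $u=-2w_1$ one has $E^u_S=E$, while the corresponding set for the Hilbert basis is all of $\bar E$, and the two relation spaces have different dimensions. Your hypothesis on $u+w$ for single generators also does not control the sums $\bar q$. At the same $u=-2w_1$, where $\langle v_1,u\rangle=-2$, the relation $u_1^1(1)+u_1^1(3)=2u_1^1(2)$ has $\bar q+u=2w_0$. This lies in $\tilde S$ but not in $S$ when $\ell_1\ge 3$. So the claim that non-normal effects are confined to $\langle v_i,u\rangle\in\{0,-1\}$ is unproven.

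The local pieces also do not live in separate degrees:
\begin{itemize}
\item for $s=1$ the degree $-w_1=-u_1^1(1)=-u_2^1(1)$ carries both $d_1=\ell_1-1$ and $d_2=\ell_2-1$;
\item for $\tilde X\cong\A^2$ one has $-u_1^0(1)=-u_2^1(0)$.
\end{itemize}
Moreover, a single local piece is not the local $T^1$. For $u=-u_1^1(m)$ with $4\le m<\ell_1$ one has $\dim(T^1_X)_u=\ell_1-m+1<\ell_1-2=\dim(T^1_{U_1})_u$. So the part ``near the first ray'' is cut down by interaction with the rest of $E$.

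Finally, your explicit counts are set up incorrectly.
\begin{itemize}
\item For $u=-u_i^0(m)$, the set $E^u_S$ is not the boundary generator plus the generators pairing to $0$ or $1$ with $v_i$. For $\tilde X\not\cong\A^2$ it is $E$ minus the $u_i^1(k)$ with $k>m$.
\item For $u=-u_i^1(m)$, every relation has $\langle v_i,\bar q\rangle\ge 2$. So no $\bar q+u$ lies on the $i$-th ray, and counting relations landing in your lattice $L_i$ yields nothing.
\end{itemize}
In any case, what has to be computed is the codimension of the span of the killed relations, not a number of surviving ones.

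The missing idea is the paper's way of making the two rays interact in a controlled manner. By Lemma~\ref{lem:S2-T1} the span of killed relations is $K_1+K_2$, with $K_i=\langle q\in\Rel(E^{u,i})\mid \bar q+u\notin S_i\rangle_k$ (called $L_i$ in the paper). By Proposition~\ref{prop:quotients-under-localisation} one has $\dim K_i=\dim\Rel_k(E^{u,i})-\dim(T^1_{U_i})_u$. Here $(T^1_{U_i})_u$ is computed once and for all in the one-dimensional model of Lemma~\ref{lem:t1} (Remark~\ref{rem:correction-terms}). Inclusion--exclusion for $K_1+K_2$ then gives Lemma~\ref{lem:surface-T1-from-intersection},
\[\dim(T^1_X)_u=\max\Bigl\{0,\;2+\sum_{i=1,2}\bigl(\dim(T^1_{U_i})_u-c_{E^{u,i}}\bigr)-\#(E^{u,1}\cap E^{u,2})\Bigr\}.\]
Its non-formal ingredient is that $K_1\cap K_2\neq0$ forces $(T^1_X)_u=0$ (end of that proof, and Lemma~\ref{lem:non-trivial-intersection}).

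The additivity $d_1+d_2+\dim(T^1_{\tilde X})_u$ is then not a structural direct sum but a numerical identity. It is checked case by case from the counts of Lemma~\ref{lem:intersection}, together with the same formula applied to $\tilde S$, which yields $\tilde d$ (Corollary~\ref{cor:surface-t1-normal}). Without an analogue of this inclusion--exclusion and of the control of $K_1\cap K_2$, your plan does not reach the stated dimensions.
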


The paper is organised as follows. In §\ref{sec:seminormal-toric} we recall the description of seminormal affine toric varieties and the characterisation of Serre's $(S_2)$-property. In §\ref{sec:general-toric-t1} we prove our $T^1$-formula for general not necessarily normal affine toric varieties and discuss simplifications under the additional assumption of the $(S_2)$-property. In §\ref{sec:seminormal-case} we prove some preliminary results for seminormal affine toric varieties, which are then deployed in §\ref{sec:surfaces} to settle the surface case in characteristic $0$.

\subsection*{Acknowledgement} We thank Milena Hering for helpful discussions and in particular for pointing us to Schreyer's Theorem.
\section{Seminormal toric varieties}
\label{sec:seminormal-toric}
Let $M$ and $N$ be dual lattices and $M_\QQ=M \otimes \QQ$, $N_\QQ=N \otimes \QQ$ the corresponding $\QQ$-vector spaces. The pairing between dual lattices and vector spaces will be denoted by $\langle\cdot ,\cdot \rangle$. For a polyhedral cone $\sigma \subset N_\QQ$ we consider the dual cone $\sigma^\vee =  \{u \in M_\QQ \mid \forall_{v \in \sigma} \colon \langle u, v \rangle \geq 0\}$. We use the notation $\tau \prec \sigma$ to denote the face relation between cones $\tau$ and $\sigma$. The dual face $\tau^* \prec \sigma^\vee$ is defined by $\tau^\perp \cap \sigma^\vee$. For a ray $\rho$ its primitive lattice generator is denoted by $v_\rho$.

From now on we consider affine monoids $S \subset M$ spanning $M$ as an abelian group. We set $\sigma=(\QQ_{\geq 0}S)^\vee$. For general, not necessarily normal, affine toric varieties $X=\spec k[S]$ we have a correspondence between invariant open subsets and faces of $\sigma$. Indeed, for a face $\tau \prec \sigma$ the monoid ring  $k[S_\tau]$ with $S_\tau:=S + \ZZ(\tau^\perp \cap S)$ is a localisation of $k[S]$, defining an open invariant subset $U_\tau$ of $X$. Note, that by construction the dual cone of $\QQ_{\geq 0}S_\tau$ is $\tau$.

\begin{definition}
  We say that $S$ \emph{satisfies $(S_2)$} if $S=\bigcap_{\rho \in \sigma(1)} S_\rho$ holds.
\end{definition}
\begin{definition}
  In our setup the monoid $S$ is called \emph{normal} if $S=\bar S:= \sigma^\vee \cap M$. In general $\bar S$ is called the \emph{normalisation} of  $S$.
  
  The monoid $S$ is called \emph{seminormal}
  if
  \begin{equation}
    \label{eq:seminormality}
    S= \bigcup_{\tau \prec \sigma} G_\tau \cap \relint(\sigma^\vee  \cap \tau^\perp)
  \end{equation}
  holds, with $G_\tau=\langle S \cap \tau^\perp \rangle$ being the abelian subgroup generated by $S \cap \tau^\perp$.
  
  We call  $G_\tau \subset  \tau^\perp \cap M$ the \emph{sublattice associated to} $\tau \prec \sigma$ via $S$ and
  \begin{equation}
  G_\tau^*/(\tau^\perp \cap M)^* %(\tau^\perp \cap M)/G_\tau 
  \label{eq:assoc-finite-quot}  
\end{equation}
  the \emph{Galois group associated to} $\tau \prec \sigma$.
\end{definition}

\begin{remark}
  Note, that a seminormal monoid satisfies $(S_2)$ if and only if for every face $\tau \prec \sigma$
  \[
   G_\tau = \bigcap_{\tau \succ \rho \in \sigma(1)} G_\rho
  \]
  holds.
\end{remark}

By \cite{zbMATH03542542} in conjunction with \cite{zbMATH01584951} the monoid ring $k[S]$ is seminormal if and only if $S$ is seminormal in the above sense. By \cite[Cor.~3.4]{zbMATH05072890} the monoid ring satisfies $(S_2)$ if and only if the monoid satisfies $(S_2)$ in our sense.
\begin{remark}
  If $X=\spec \CC[S]$ is semi-normal, then the non-normal locus of $X$ consists exactly of the orbit closures
  which correspond to faces with non-trivial associated Galois group. Moreover, this group is, indeed, the Galois group of the covering obtained by restricting the normalisation map to the corresponding orbit.
\end{remark}

\begin{remark}
  If a monoid $S$ is normal it is completely determined by the cone $\sigma={(\QQ_{\geq 0}S)}^\vee$. If it is seminormal we additionally need the sublattices associated to the faces of $\sigma$ to be able to reconstruct $S$. Moreover, if $S$ also satisfies $(S_2)$, then it is already determined by $\sigma$ together with the sublattices associated to the rays.
\end{remark}

\begin{remark}
   It is straight forward to check, that in the case of a seminormal monoid $S$, the monoid $S_\tau$ is again seminormal and the sublattices associated to the faces of $\tau$ via $S_\tau$ are the same as those associated them as faces of $\sigma$ via $S$.
\end{remark}
\section{A general toric  $T^1$-formula}
\label{sec:general-toric-t1}
We study the graded components $(T^1_X)_u$ of the space of first order infinitesimal deformations of not necessarily normal affine toric varieties. We mainly follow the notation and ideas of \cite{zbMATH00681806} and \cite{zbMATH00895853}.

Consider a $k$-algebra $A=R/I$ with $R=k[z_1,\ldots,z_r]$ and the corresponding affine variety $X=\spec A$. The $A$-module $T^1_X$ parametrises first order deformations of $A$.  It can be calculated as
\begin{equation}
  T^1_X={\Hom_A(I/I^2,A)}/\Der_A(I/I^2,A),\label{eq:t1-general-formula}
\end{equation}
where $\Der_A(I/I^2,A)$ is the submodule generated by the homomorphisms $\partial_i \colon  f \mapsto \partial f/\partial z_i$. This definition is up to isomorphism independent of the representation $A=R/I$. If the $A$ is $M$-graded, all involved modules are naturally $M$-graded as well. Let us write e.g. $\Hom_A(I/I^2,A)_u$ for the vector space of homogeneous elements of degree $u$.

We consider the following general setup. Let $E=\{w_1,\ldots,w_r\} \subset S$ be a finite subset of an affine monoid $S$ and let $M$ be the free abelian group generated by $S$. This gives rise to  homomorphisms
\[\pi \colon \ZZ^r \to M,\; e_i \mapsto w_i, \qquad \pi|_{\ZZ_{\geq 0}^r} \colon \ZZ_{\geq 0}^r \to S.\]
and
We call \(\Rel(E)=\ker  \pi\) the group of integral relations of the set $E$. We write $\supp q= \{w_i \mid q_i \neq 0\}$ for the \emph{support} of a relation. Every relation $q$ can be written uniquely as $q^+-q^-$ with $q^+,q^- \in \ZZ_{\geq 0}^r$, such that the supports of $q^+$ and $q^-$ are disjoint. We sometimes write $\bar q$ as a short form for $\pi(q^+)$ (which coincides with $\pi(q^-)$). In the following if we write
$a-b \in \Rel(E)$, then it should be understood, that $a,b \in \ZZ_{\geq 0}^r$. Sometimes it will be more convenient to denote the relation $q=(q_1,\ldots,q_r)$ by $q \colon \sum_{i=1}^r q_r \cdot w_r=0$.

We also obtain an induced $k$-algebra homomorphism
\[k[z_1,\ldots,z_r] \to k[S],\; z_i \mapsto \chi^{w_i}.\]
The kernel $I$ of this homomorphism is generated by elements of the form $z^a-z^b$ with $a-b \in \Rel(E)$.
If $E$ was a generating set, then all the constructed homomorphisms are surjective and 
$k[S]$ is isomorphic to $k[z_1,\ldots,z_r]/I$. We set $\Rel_k(E) = \ker (\pi \otimes k)$. In general, we then have $\Rel(E) \otimes k \subset \Rel_k(E)$. In positive characteristic this inclusion can be strict. In any case, for subsets $G \subset \Rel(E)$ we may consider its $k$-span $\langle G \rangle_k:=\langle G \rangle \otimes k \subset \Rel_k(E)$. In the following we assume $E\subset S$ to be a generating set.
\begin{lemma}
  \label{lem:hom-iso}
  For $X=\spec k[S]$ and $E\subset S$ a generating set the map
  \begin{align}
    \Hom(\Rel(E) /\langle q \mid \bar q + u \notin S \rangle,k) &\to \Hom_{k[S]}(I/I^2,k[S])_u \label{eq:vs-iso}\\
    \varphi \quad&\mapsto \; \left(z^a - z^b \mapsto \varphi(a-b) \chi^{\pi(a)+u}\right) \nonumber
  \end{align}
  defines an isomorphism of vector spaces.

  Moreover, this extends to an isomorphism of $k[S]$-modules
  \begin{equation}
\bigoplus_{u \in M}\Hom(\Rel(E) /\langle q \mid \bar q + u \notin S \rangle,k) \stackrel{\sim}{\longrightarrow} \Hom_{k[S]}(I/I^2,k[S])\label{eq:module-iso}
\end{equation}
  where for $w \in S$ the multiplication with $\chi^w$ on the left-hand-side is given via the natural inclusion
  \[ \Hom(\Rel(E) /\langle q \mid \bar q + u \notin S \rangle,k) \hookrightarrow \Hom(\Rel(E) /\langle q \mid \bar q + u +w \notin S \rangle,k).\]
\end{lemma}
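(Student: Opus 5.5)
The plan is to exploit the fact that $I$ is a binomial ideal generated by $z^a-z^b$ with $a-b\in\Rel(E)$, and that $I$ and $k[S]$ are $M$-graded via $\deg z^a=\pi(a)$. Take a homogeneous $\psi\in\Hom_{k[S]}(I/I^2,k[S])_u$. For a binomial $z^a-z^b$ of degree $\pi(a)=\pi(b)$, the image $\psi(z^a-z^b)$ is homogeneous of degree $\pi(a)+u$. Hence it equals $c\,\chi^{\pi(a)+u}$ for a scalar $c$, and $c=0$ is forced whenever $\pi(a)+u\notin S$. So $\psi$ amounts to a function $\Phi(a,b)\in k$ on pairs with $\pi(a)=\pi(b)$. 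The strategy is to show three things: $\Phi(a,b)$ depends only on $q=a-b$; it is additive in $q$; and it vanishes on $q$ with $\bar q+u\notin S$. Conversely, any such $\varphi$ should define a well-defined $k[S]$-linear map on $I/I^2$.

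For the first two properties I would use $A$-linearity. Since $z^{c}(z^a-z^b)=z^{a+c}-z^{b+c}$, we get $\Phi(a+c,b+c)\chi^{\pi(a+c)+u}=\chi^{\pi(c)}\Phi(a,b)\chi^{\pi(a)+u}$. So $\Phi(a+c,b+c)=\Phi(a,b)$ whenever $\pi(a)+\pi(c)+u\in S$. The identity $(z^a-z^b)+(z^b-z^c)=z^a-z^c$ gives $\Phi(a,c)=\Phi(a,b)+\Phi(b,c)$. With these:
\begin{itemize}
\item Any pair $(a,b)$ representing $q$ equals $(q^++c,q^-+c)$ for a unique $c\ge0$. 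Set $\varphi(q):=\Phi(q^+,q^-)$ when $\bar q+u\in S$ and $\varphi(q):=0$ otherwise.
\item Translation invariance then shows $\Phi(a,b)=\varphi(q)$ whenever $\pi(a)+u\in S$. If instead $\pi(a)+u\notin S$, then $\Phi(a,b)$ does not matter, because the image is zero.
\item Additivity $\varphi(q+q')=\varphi(q)+\varphi(q')$ should follow from the cocycle identity. Choose representatives $(a,b)$ of $q$ and $(b,c)$ of $q'$ sharing a common shifted middle term, of degree high enough to land in $S$ (add a large $c_0$ with $\pi(c_0)\in S$ deep inside). Then undo the shift using translation invariance.
\end{itemize}

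The main obstacle is this step: translation invariance can only be inverted when the unshifted degree plus $u$ lies in $S$. Precisely there the quotient by $\langle q\mid \bar q+u\notin S\rangle$ must absorb the failure. So I need to check that $\varphi$, extended by additivity from generators, is a genuine homomorphism on $\Rel(E)$ killing exactly that subgroup. Concretely, I would define $\varphi$ on all of $\Rel(E)$ by $\varphi(q):=\Phi(q^++c,q^-+c)$ for any $c$ with $\pi(q^+ + c)+u\in S$; such $c$ exist since $S$ spans $M$ and contains elements deep in the interior. Translation invariance between any two such $c$ (compare both with $c+c'$) gives independence of $c$. Additivity then holds by the cocycle identity at a common high shift. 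Finally, if $\bar q+u\notin S$, then $z^{q^+}-z^{q^-}\in I$ maps to $0$. Its shifts are $z^c$ times it, so they also give $0$, and hence $\varphi(q)=0$.

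For the converse, given $\varphi$ I would define $\psi(z^a-z^b)=\varphi(a-b)\chi^{\pi(a)+u}$, reading the right side as $0$ when $\pi(a)+u\notin S$. I extend $k$-linearly over a binomial $k$-basis of $I$; the differences $z^a-z^b$ with $\pi(a)=\pi(b)$ span $I$, and their relations are generated by the cocycle identity within each degree. Well-definedness on these relations is exactly additivity of $\varphi$. $A$-linearity is the computation above, and it is here that vanishing on $\bar q+u\notin S$ is needed: a shift may move the degree into $S$ while the original was outside. Vanishing on $I^2$ follows since $(z^a-z^b)(z^c-z^d)=z^c(z^a-z^b)-z^d(z^a-z^b)$ and $\pi(c)=\pi(d)$, so the two images cancel. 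The two constructions are mutually inverse.

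For the module statement, multiplying $\psi$ by $\chi^w$ gives degree $u+w$, with $\chi^w\psi(z^a-z^b)=\varphi(a-b)\chi^{\pi(a)+u+w}$. This is the same $\varphi$, now viewed on the larger quotient. The subgroup $\langle q\mid \bar q+u+w\notin S\rangle$ is contained in $\langle q\mid\bar q+u\notin S\rangle$ because $S+w\subset S$, so this is the stated inclusion.
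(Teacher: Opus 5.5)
Your proposal is correct. Its forward direction is essentially the paper's, but it proves the converse by a different and more elementary route.

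\textbf{Forward direction.} The paper also reads off $\varphi(a-b)$ as the coefficient of $\chi^{\pi(a)+u}$. It gets independence of the representative from $\Phi(z^c(z^a-z^b))=\chi^{\pi(c)}\Phi(z^a-z^b)$. It gets additivity from $z^{a+a'}-z^{b+b'}=z^{a'}(z^a-z^b)+z^b(z^{a'}-z^{b'})$, which is your cocycle identity with middle term $z^{a'+b}$.

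Your detour through a ``high shift'' is valid but not needed. With the convention $\Phi(a,b)=0$ whenever $\pi(a)+u\notin S$, the identity $\psi(z^{a+c}-z^{b+c})=\chi^{\pi(c)}\psi(z^a-z^b)$ gives $\Phi(a+c,b+c)=\Phi(a,b)$ unconditionally: if $\pi(a)+u\notin S$, the right-hand side is already $0$. So the ``main obstacle'' you flag does not actually arise.

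If you keep the shift, justify the existence of $c$ by writing $\bar q+u=s_1-s_2$ with $s_i\in S$, which is possible because $S$ generates $M$. Being ``deep in the interior'' alone does not give membership in a non-normal $S$.

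\textbf{Converse.} The paper defines $\Phi$ only on a binomial Gr\"obner basis of $I$ and extends $k[S]$-linearly. Well-definedness then requires control of the syzygies of $I/I^2$, which the paper obtains in Lemma~\ref{lem:syzygies} via Buchberger's criterion and Schreyer's theorem.

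You instead define $\psi$ on all binomials and use that each graded piece $I_m$ is the kernel of $k[z_1,\ldots,z_r]_m\to k\chi^m$. Hence $I_m$ is spanned by the $z^a-z^b$ with $\pi(a)=\pi(b)=m$. Its linear relations are generated by the cocycle identities: for a fixed $a_0$ in the fibre, the $z^a-z^{a_0}$ form a basis, and a cocycle satisfies $F(a,b)=F(a,a_0)-F(b,a_0)$. Well-definedness is then literally additivity of $\varphi$. You check $k[z_1,\ldots,z_r]$-linearity on this $k$-spanning set, and vanishing on $I^2$ follows automatically.

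This avoids Gr\"obner bases and Schreyer's theorem altogether and makes the lemma self-contained. The paper's route additionally describes the syzygy module, but that description is not used anywhere else in the paper.
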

\begin{proof}
  The ideal $I$ is naturally $M$-graded via $\deg(z^a-z^b)=\pi(a)=\pi(b)$. A homogeneous element  $\Phi \in \Hom(I/I^2, k[S])$ of degree $u$ sends $z^a-z^b$ therefore to some element  in $k \cdot \chi^{\pi(a)+u}$. Let us denote the coefficient in front of $\chi^{\pi(a)+u}$ by $\varphi(a-b)$.  This defines a map $\varphi \colon \Rel(E) \to k$. Indeed, assume we have $a'-b'=a-b$. Then
  $a'=a+c$ and $b'=b+c$ and
  \[\Phi(z^{a'}-z^{b'})=\Phi(z^c(z^a-z^b))=\Phi(z^a-z^b)\chi^{\pi(c)}=\varphi(a-b)\chi^{\pi(a')+u}.\]
  Since $\varphi(a-b)\chi^{\pi(a)+u} \in k[S]$ holds, the map has to vanish on the relations $\langle q \in \Rel(E) \mid \bar q + u \notin S \rangle$.  It remains to show that the map $\varphi$  is a homomorphism. To this end we observe
  \begin{align*}
    \Phi(z^{a+a'}-z^{b+b'}) &=  \Phi(z^{a'}(z^{a}-z^{b})+z^{b}(z^{a'}-z^{b'}))\\
                            &= \Phi(z^{a'}(z^{a}-z^{b})) + \Phi(z^{b}(z^{a'}-z^{b'}))\\
                            &=\varphi(a-b)\chi^{\pi(a+a')+u} + \varphi(a'-b')\chi^{\pi(a'+b)+u}\\
                            &=\varphi(a-b)\chi^{\pi(a+a')+u} + \varphi(a'-b')\chi^{\pi(a+a')+u}\\
                            &=(\varphi(a-b) + \varphi(a'-b'))\chi^{\pi(a+a')+u}.
  \end{align*}
  Hence, $\varphi((a+a)'-(b+b'))=\varphi(a-b)+ \varphi(a'-b')$ holds.

  On the other hand, every group homomorphism
  \[\varphi \colon \Rel(E) /\langle q \mid \bar q + u \notin S \rangle \to k\]
  induces a module homomorphism $\Phi \colon I/I^2 \to k[S]$ of degree $u$ which can be defined on the elements of a binomial Gröbner basis of $I$ by   $\Phi(z^a-z^b)= \varphi(a-b)\chi^{\pi(a)+u}$. That $\Phi$ is indeed well defined in this way, follows from Lemma~\ref{lem:syzygies}, below.  It is also easy to see that both assignments are $k$-linear and inverse to each other.

  The last claim just follows by tracing the multiplication by $\chi^w$ through the constructed isomorphism.
\end{proof}

\begin{lemma}
  \label{lem:syzygies}
In the situation described above, assume \[G=\{(z^{a_1}-z^{b_1}),\ldots, (z^{a_m}-z^{b_m})\}\] is a Gröbner basis for $I$.
Then the syzygies between the corresponding generators of $I/I^2$ are generated by relations of the form
  \begin{equation}
    \label{eq:syzygy}
    \sum_i \lambda_i \chi^{u_i} \cdot (z^{a_i}-z^{b_i})=0 \text{ with }
      \sum_i \lambda_i (a_i-b_i)=0
    \end{equation}
\end{lemma}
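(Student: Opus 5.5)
The approach is to derive the statement from Buchberger's criterion in the form of Schreyer's theorem. For a Gröbner basis $G=\{g_i=z^{a_i}-z^{b_i}\}$ of $I$ with respect to a term order, the syzygy module of $(g_1,\ldots,g_m)$ over $R=k[z_1,\ldots,z_r]$ is generated by the reduced S-pair syzygies. For each pair $i<j$ we set $m_{ij}=\operatorname{lcm}(\mathrm{in}(g_i),\mathrm{in}(g_j))$. We then reduce the S-polynomial $S(g_i,g_j)=\frac{m_{ij}}{\mathrm{in}(g_i)}g_i-\frac{m_{ij}}{\mathrm{in}(g_j)}g_j$ to zero by the division algorithm. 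The resulting standard expression $S(g_i,g_j)=\sum_k h_{ijk}g_k$ gives the syzygy $s_{ij}$. Since $I$ is generated by binomials and every S-polynomial of binomials is again a binomial (a difference of two monomials), each division step subtracts a monomial multiple of a single $g_k$. Hence every coefficient is a sum of terms $\pm z^{c}$, and $s_{ij}$ is a relation $\sum_\ell \epsilon_\ell z^{c_\ell} g_{k_\ell}=0$ with $\epsilon_\ell=\pm1$.

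The first key step is to check that such a binomial syzygy has the stated form. Each reduction step rewrites a monomial $z^{p}$ as $z^{p'}$ with $z^p-z^{p'}=\pm z^{c}g_k$, so that $p-p'=\pm(a_k-b_k)$. Following the chain from the starting binomial $z^{m_{ij}/\ldots}$ to zero, the exponent vectors telescope. Concretely, the S-polynomial itself satisfies $\frac{m_{ij}}{\mathrm{in}(g_i)}z^{a_i}-\frac{m_{ij}}{\mathrm{in}(g_j)}z^{a_j}=0$ in exponent terms. As a consequence, the signed sum $\sum_\ell \epsilon_\ell(a_{k_\ell}-b_{k_\ell})$ over all steps, together with $\pm(a_i-b_i)$ and $\pm(a_j-b_j)$, equals the difference of the start and end exponents of the chain, namely $0$. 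Passing to $I/I^2$ and then to $k[S]$, the monomial coefficients $z^{c}$ become $\chi^{\pi(c)}=\chi^{u_\ell}$. This gives exactly the form \eqref{eq:syzygy} with $\lambda_\ell=\epsilon_\ell$.

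The second step passes from syzygies over $R$ to syzygies of the images of the $g_i$ in $I/I^2$ as an $A$-module. The syzygies of $I/I^2$ are the $A$-linear relations $\sum f_i\bar g_i=0$, meaning $\sum f_ig_i\in I^2$. Such relations come from two sources. The first is $R$-syzygies reduced mod $I$, which are handled above. The second consists of the Koszul-type relations $g_j\cdot g_i - g_i\cdot g_j$ and, more generally, coefficients lying in $I$. Coefficients in $I$ vanish already in $A$, so this source is trivial. To justify that nothing else occurs, write $\sum f_ig_i=\sum h_{ij}g_ig_j$. Then $(f_i-\sum_j h_{ij}g_j)_i$ is an $R$-syzygy, and it differs from $(f_i)$ by coefficients in $I$. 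Finally, because everything is $M$-graded, each homogeneous syzygy splits into the $\chi^{u}$-monomial form.

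The main obstacle I expect is the telescoping bookkeeping in the first step. The division algorithm may reduce either term of the current binomial, and a reduction step can cancel monomials. The signs $\epsilon_\ell$ therefore have to be tracked so that the exponent relation $\sum\lambda_\ell(a_{k_\ell}-b_{k_\ell})=0$ really holds. I would handle this by treating the reduction as a path in the fibre graph $\pi^{-1}(\bar q)$. In that graph each edge is a move by some $\pm(a_k-b_k)$, and a closed path yields a zero sum of moves.
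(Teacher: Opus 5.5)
Your proposal is correct and follows essentially the same route as the paper: Schreyer's theorem for the binomial Gr\"obner basis, where each S-polynomial reduction step contributes a term whose exponent difference cancels. Your telescoping along the fibre is just the paper's step-by-step summation of the three-term binomial identities, and you then lift syzygies of the $\bar g_i$ in $I/I^2$ to $R$-syzygies as the paper does. Your explicit lift $(f_i-\sum_j h_{ij}g_j)_i$ spells out a step that the paper only asserts.
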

\begin{proof}
  We make the following elementary, yet, crucial observation. Assume we have $z^a-z^b, z^{a'}-z^{b'} \in I$
  and choose $c,c' \in \ZZ_{\geq 0}^r$ such that
  \begin{equation}
    \label{eq:s-poly-and-reduction}
    z^c(z^a-z^b)\pm z^{c'} (z^{a'}-z^{b'}) = z^{a''}-z^{b''}
  \end{equation}
  is a binomial again. Then $(a-b) \pm (a'-b') - (a''-b'')=0$ holds. Hence, every relation 
  \begin{equation}
    \label{eq:induction-step}
     \chi^{\pi(c)}(z^a-z^b)\pm \chi^{\pi(c')} (z^{a'}-z^{b'}) - (z^{a''}-z^{b''})=0,
  \end{equation}
  over $k[S]$ arising in this way, fulfils indeed the condition in (\ref{eq:syzygy}). On the other hand, $k$-linear combinations of relations of the type (\ref{eq:syzygy}) will be again of this type.

   Forming Buchberger's S-polynomial for a pair of elements of the Gröbner basis leads to a relation (\ref{eq:induction-step}), which involves two elements of the Gröbner basis and one new binomial $(z^{a''}-z^{b''})$.
   We now fully reduce $(z^{a''}-z^{b''})$ with respect to $G$, but each reduction step corresponds to an identity as in (\ref{eq:s-poly-and-reduction}).   For each such reduction we add the corresponding relation of the type (\ref{eq:induction-step}). By Bucherberger's criterion the full reduction with respect to $G$ gives $0$. Hence, we end up with a relation which only involves the images of the elements of $G$. Schreyer's Theorem \cite[Chapter 5]{zbMATH02163714} tells us that these relations indeed generate the full syzygy module.

  Note, that Schreyer's theorem actually deals with the syzygies between the elements of $G$. However, every syzygy between the images of the elements of $G$ in the $k[S]$-module $I/I^2$ can be lifted to a syzygy of the elements of $G$ as elements of the $k[z_1,\ldots,z_r]$-module $I$.
\end{proof}

For any subset $F \subset M$ and $u \in M$ we set $F_S^u = \{w \in F \mid u+w \notin S\}$.
\begin{theorem}
  \label{thm:t1-general-formula}
  Assume $X=\spec k[S]$ is a not necessarily normal toric variety and $E\subset S$ is a generating set of the monoid $S$. We have
   \[(T^1_X)_u \cong \left(\Rel_k(E^u_S) /\langle q \in \Rel(E^u_S) \mid \bar q + u \notin S \rangle_k \right)^*.\]

   Moreover, multiplication with $\chi^w$ on the left-hand-side corresponds to the natural linear map
   \[
     \left(\Rel_k(E^u_S) /\langle q  \mid \bar q + u \notin S \rangle_k \right)^* \to  \left(\Rel_k(E^{u+w}) /\langle q \mid \bar q + u+w \notin S \rangle_k \right)^*
   \]
   which is induced by the restriction of a linear form on $\Rel_k(E^u_S)$ to $\Rel_k(E^{u+w}) \subset \Rel_k(E^u_S)$. In particular, $\chi^w$ annihilates those elements in $(T^1_X)_u$, which correspond to linear forms that vanish on $\Rel_k(E^{u+w})$. 
 \end{theorem}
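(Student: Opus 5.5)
We combine Lemma~\ref{lem:hom-iso} with an explicit description of the derivations $\Der_A(I/I^2,A)_u$ and then simplify the quotient. By Lemma~\ref{lem:hom-iso}, $\Hom(I/I^2,k[S])_u$ is the space of group homomorphisms $\varphi\colon\Rel(E)\to k$ vanishing on $\langle q\mid \bar q+u\notin S\rangle$. Such a $\varphi$ is the same as a $k$-linear form on $\Rel(E)\otimes k$ vanishing on that span. First I will check that these forms extend to $\Rel_k(E)$ in the way the statement needs. The cleanest route is to work directly with $\Rel_k(E)=\ker(\pi\otimes k)$ as the natural domain. Let $D_i=\ker(\ldots)$ denote the degree shift below; a derivation $\partial_i$ applied to $z^a-z^b$ gives $a_i z^{a-e_i}-b_i z^{b-e_i}$, which maps to $(a_i-b_i)\chi^{\pi(a)-w_i}$. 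So $\chi^{u+w_i}\partial_i$ is homogeneous of degree $u$ whenever $u+w_i\in S$. It corresponds to the linear form $q\mapsto q_i$, i.e.\ the $i$-th coordinate $e_i^*$ restricted to relations.

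Hence $\Der_A(I/I^2,A)_u$ is the span of the coordinate forms $e_i^*$ with $w_i\notin E^u_S$, that is, with $u+w_i\in S$. These forms automatically vanish on relations $q$ with $\bar q+u\notin S$. Indeed, if $q_i\neq 0$ then, say, $q^+_i>0$, so $\bar q+u=(u+w_i)+\pi(q^+-e_i)\in S$. Consequently
\[(T^1_X)_u\cong\{\varphi\in \Rel_k(E)^*: \varphi|_{N_u}=0\}\,/\,\langle e_i^*\mid w_i\notin E^u_S\rangle,\]
where $N_u=\langle q\mid\bar q+u\notin S\rangle_k$. Next I dualise. A quotient of a space of forms by a span of forms is dual to the common kernel of those forms inside the annihilated space. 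Here the common kernel of the $e_i^*$ with $w_i\notin E^u_S$ in $\Rel_k(E)$ is exactly $\Rel_k(E^u_S)$, the relations supported on $E^u_S$. By the observation above, every $q\in\Rel(E)$ with $\bar q+u\notin S$ is already supported in $E^u_S$. So $N_u$ is the $k$-span of $\{q\in\Rel(E^u_S)\mid \bar q+u\notin S\}$, and the dual computation gives $(\Rel_k(E^u_S)/N_u)^*$. All spaces are finite dimensional, so the dualities are harmless.

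The module statement follows by transporting the multiplication rule from Lemma~\ref{lem:hom-iso}. Multiplication by $\chi^w$ sends $\varphi$ to the same form, now regarded in degree $u+w$. Under the identification above, this becomes restriction of the linear form from $\Rel_k(E^u_S)$ to $\Rel_k(E^{u+w}_S)$, and $E^{u+w}_S\subset E^u_S$ holds since $S+w\subset S$. The kernel statement is then immediate.

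The main obstacle is the step from group homomorphisms on $\Rel(E)$, as delivered by Lemma~\ref{lem:hom-iso}, to linear forms on $\Rel_k(E)$. In positive characteristic $\Rel(E)\otimes k\subsetneq\Rel_k(E)$ can occur, so I will have to verify that Lemma~\ref{lem:hom-iso} really yields forms on $\Rel_k(E)$. My first attempt is to redo the well-definedness argument of Lemma~\ref{lem:syzygies}, tensoring the syzygy relations with $k$. If that fails, I will fall back on showing that the discrepancy is absorbed by the quotient: in the ambiguous directions, all derivations together with $N_u$ must already account for the difference, which I would check via the Smith normal form of $\pi$.
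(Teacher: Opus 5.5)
Your route is the paper's. You identify $\Hom_{k[S]}(I/I^2,k[S])_u$ via Lemma~\ref{lem:hom-iso}, match the degree-$u$ derivations $\chi^{u+w_i}\partial_i$ (with $u+w_i\in S$) with the coordinate projections $p_i$, and dualise, using that the common kernel of these projections in $\Rel_k(E)$ is $\Rel_k(E^u_S)$. Your explicit check that every $q$ with $\bar q+u\notin S$ is supported on $E^u_S$ is needed for the paper's second equality, since it is what makes the $p_i$ vanish on $N_u$. The paper only proves this afterwards, in Lemma~\ref{lem:involved-generators}. Your treatment of the module statement via $E^{u+w}_S\subset E^u_S$ is also fine.

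The one step you leave open does remain open in your write-up, and neither remedy you propose targets it. The issue is pure linear algebra, not syzygies. Since $E$ generates $S$ and $S$ generates $M$, the map $\pi\colon\ZZ^r\to M$ is surjective onto a free abelian group. So $0\to\Rel(E)\to\ZZ^r\to M\to 0$ splits, and tensoring with $k$ gives $\Rel(E)\otimes k=\ker(\pi\otimes k)=\Rel_k(E)$ in every characteristic. Hence $\Hom(\Rel(E)/\langle q\mid\bar q+u\notin S\rangle,k)=(\Rel_k(E)/N_u)^*$. This is exactly the identification the paper makes, without comment, in the first line of its proof.

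Redoing Lemma~\ref{lem:syzygies} with coefficients in $k$ concerns the well-definedness of $\Phi$. Lemma~\ref{lem:hom-iso} already settles that for arbitrary $k$-valued $\varphi$, and it says nothing about whether forms on $\Rel(E)\otimes k$ and on $\Rel_k(E)$ coincide. You also should not look for a discrepancy that is ``absorbed by the quotient''. For the generating set $E$ there is none. For the non-generating subset $E^u_S$, the gap between $\Rel(E^u_S)\otimes k$ and $\Rel_k(E^u_S)$ is real and survives into $(T^1_X)_u$; it produces the extra $\dim_k(G_\rho\otimes k)$ terms in the remark on positive characteristic after Lemma~\ref{lem:t1}. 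Your Smith normal form fallback would succeed, but only because it shows that all elementary divisors of the surjection $\pi$ equal $1$, which is the splitting above.
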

\begin{remark}
  Note that by \cite{zbMATH00895853} the torsion submodule of the module of differentials of $X$ is described by an almost identical formula, where the condition of \emph{not} being contained in $S$ just has to be replaced by the condition of being contained in $S$. 
\end{remark}
 \begin{proof}[Proof of Theorem~\ref{thm:t1-general-formula}]
   We have
   \begin{align*}
    \left(\Rel_k(E) /\langle q \mid \bar q + u \notin S \rangle_k \right)^*  &\cong \left(\Rel(E) /\langle q \mid \bar q + u \notin S \rangle \otimes k \right)^* \\
     &\cong \Hom(\Rel(E) /\langle q \mid \bar q + u \notin S \rangle,k)
   \end{align*}

   Now a homogeneous element of degree $u$ in $\Der_A(I/I^2,A)$ has the form
  \[\chi^{u+w_i} \partial_i \colon I/I^2 \to k[S],\quad f \mapsto \chi^{u+w_i} \cdot \partial f/\partial z_i\]
  with $u+w_i \in S$. The isomorphism, considered in the proof of Lemma~\ref{lem:hom-iso}, sends such an element to the projection
  \[p_i \colon \Rel(E) \to k, \; a-b \mapsto a_i-b_i,\]
  since \[\partial (z^a-z^b)/\partial z_i = (a_i-b_i)\chi^{\pi(a)-w_i}.\]
  Hence, we obtain
  \begin{align*}
    (T^1_X)_u &=  \left.\left(\Rel_k(E) /\langle q \mid \bar q + u \notin S \rangle_k \right)^*\middle/ \langle p_i \mid u+w_i \in S \rangle_k\right.\\
              &=\left(\left. (\Rel_k(E) \cap \langle p_i \mid  u+w_i \in S\rangle^\perp_k) \middle/ \langle q \mid \bar q + u \notin S \rangle_k \right. \right)^*\\
             &= \left(\Rel_k(E^u_S) /\langle q \mid \bar q + u \notin S \rangle_k \right)^*.
  \end{align*}
\end{proof}

\begin{lemma}
  \label{lem:involved-generators}
  Assume we have $E \subset S$ and $q \in L(E)$ with $\bar q+u \notin S$. In this situation we have $q \in L(E^u_S)$.
\end{lemma}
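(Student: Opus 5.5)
The argument is a direct contrapositive using only that $S$ is a monoid, i.e.\ closed under addition. We show that every element of $\supp q$ lies in $E^u_S$. Since the coefficients of $q$ outside $E^u_S$ are then zero, $q$ lies in $\Rel(E^u_S)\subset\Rel(E)$, viewing relations among elements of $E^u_S$ as those relations of $E$ supported on $E^u_S$.

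Write $q=q^+-q^-$ with $q^\pm\in\ZZ_{\geq 0}^r$ of disjoint support, so that
\[\bar q=\pi(q^+)=\pi(q^-)=\sum_j q^+_j w_j=\sum_j q^-_j w_j.\]
Suppose some $w_i\in\supp q$ satisfies $u+w_i\in S$. Then $q_i\neq 0$, so either $q^+_i>0$ or $q^-_i>0$. Because $\bar q$ is computed by both $q^+$ and $q^-$, we may treat the two cases symmetrically; say $q^+_i\geq 1$.

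Then
\[\bar q+u=(u+w_i)+(q^+_i-1)w_i+\sum_{j\neq i}q^+_j w_j .\]
The first summand lies in $S$ by assumption. The remaining summands are nonnegative integer combinations of elements of $E\subset S$, so they also lie in $S$. As $S$ is closed under addition, $\bar q+u\in S$, which contradicts the hypothesis $\bar q+u\notin S$.

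Hence every $w_i$ with $q_i\neq0$ satisfies $u+w_i\notin S$, that is, $\supp q\subset E^u_S$, and so $q\in\Rel(E^u_S)$.

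There is no real obstacle here. The only point needing care is the bookkeeping identification of $\Rel(E^u_S)$ with the relations of $E$ supported on $E^u_S$. This identification is the one implicitly used in the proof of Theorem~\ref{thm:t1-general-formula}, where $\Rel_k(E^u_S)$ is identified with $\Rel_k(E)\cap\langle p_i\mid u+w_i\in S\rangle_k^\perp$.
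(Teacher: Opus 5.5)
Your proposal is correct and is essentially the paper's argument. The paper also takes $w_i\in\supp q$ with, without loss of generality, a positive coefficient, writes $\bar q+u=\pi(q^+-e_i)+w_i+u$ with $\pi(q^+-e_i)\in S$, and concludes $w_i+u\notin S$; you phrase the same computation as a contrapositive.
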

\begin{proof}
  Assume $w_i \in \supp q$. W.l.o.g we may assume that $w_i$ appears with positive coefficient in $q$. Then
  $q^+ -e_i \in \ZZ^r_{\geq}$ and we have
  \[\bar q+u=\pi(q^+-e_i)+\pi(e_i)+u=\pi(q^+-e_i)+w_i+u\]
  Since $\pi(q-e_i)\in S$ and $\bar q+u \notin S$ it follows that $w_i+u \notin S$. 
\end{proof}

\begin{lemma}
  \label{lem:S2-T1}
  Assume $X=\spec k[S]$ satisfies $(S_2)$, $\sigma=(\QQ_{\geq 0}S)^\vee$ and $E$ is a generating set of $S$. Then  we have
  \begin{equation}
    \sum_\rho \langle q \in \Rel(E^u_{S_\rho}) \mid \bar q+u \notin S_\rho\rangle  = \langle q \in \Rel(E^{u}) \mid \bar q + u \notin S \rangle \label{eq:S2-T1}
  \end{equation}
  and
  \begin{equation}
    \label{eq:S2-T1-formula}
     (T^1_X)_u \cong \left(\Rel_k(E^u_S) /\sum_\rho \langle q \in \Rel(E^u_{S_\rho}) \mid \bar q+u \notin S_\rho\rangle_k  \right)^*.
  \end{equation}
\end{lemma}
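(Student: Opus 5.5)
We prove the two inclusions in \eqref{eq:S2-T1} separately, and then obtain \eqref{eq:S2-T1-formula} by substituting into Theorem~\ref{thm:t1-general-formula}.

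\emph{The inclusion ``$\subseteq$''.} Since $S \subset S_\rho$, the condition $u+w \notin S_\rho$ implies $u+w\notin S$. Hence $E^u_{S_\rho} \subset E^u_S$, and so $\Rel(E^u_{S_\rho}) \subset \Rel(E^u_S)$. Similarly, $\bar q + u \notin S_\rho$ implies $\bar q + u \notin S$. Therefore every generator on the left-hand side lies in $\langle q \in \Rel(E^u_S) \mid \bar q+u\notin S\rangle$. By Lemma~\ref{lem:involved-generators} this is the same group as $\langle q \in \Rel(E) \mid \bar q+u\notin S\rangle$, which is how the right-hand side is to be read.

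\emph{The inclusion ``$\supseteq$''.} Let $q\in\Rel(E)$ with $\bar q + u \notin S$. By $(S_2)$ we have $S=\bigcap_\rho S_\rho$, so there is a ray $\rho$ with $\bar q+u\notin S_\rho$. It remains to show $q\in \Rel(E^u_{S_\rho})$. For this we apply Lemma~\ref{lem:involved-generators} with the monoid $S_\rho$ in place of $S$. Its proof only used that $E\subset S_\rho$, that $\pi(\ZZ^r_{\geq0})\subset S_\rho$ (true since $E\subset S\subset S_\rho$), and that $\bar q+u\notin S_\rho$. It shows that every $w_i\in\supp q$ satisfies $w_i+u\notin S_\rho$, i.e.\ $w_i \in E^u_{S_\rho}$. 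Hence $q$ is one of the generators of the $\rho$-summand on the left. Since the right-hand side is generated by such $q$, this gives the inclusion.

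\emph{The formula \eqref{eq:S2-T1-formula}.} Tensor \eqref{eq:S2-T1} with $k$ (taking $k$-spans) and substitute into Theorem~\ref{thm:t1-general-formula}. The only care needed is that the span $\langle\cdot\rangle_k$ of a sum of subgroups is the sum of their spans. This holds because $\langle G\rangle\otimes k$ is by definition the image in $\Rel_k(E)$, and images commute with sums.

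The main subtlety is checking that Lemma~\ref{lem:involved-generators} applies with $S_\rho$ even though $E$ need not generate $S_\rho$. Inspecting its proof shows that generation is never used, only $E\subset S_\rho$. Everything else is formal.
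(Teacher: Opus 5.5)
Your proposal is correct and follows the paper's proof essentially step for step. The inclusion $\subseteq$ comes from $S\subset S_\rho$. The reverse inclusion uses $(S_2)$ to find a ray with $\bar q+u\notin S_\rho$ and then applies Lemma~\ref{lem:involved-generators} with $S_\rho$ in place of $S$. The formula follows by substituting into Theorem~\ref{thm:t1-general-formula}. You also check two points the paper leaves implicit: that the lemma only needs $E\subset S_\rho$, not that $E$ generates $S_\rho$, and that $k$-spans commute with sums.
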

\begin{proof}
 Since we have $S \subset S_\rho$ for every $\rho \in \sigma(1)$, we get the inclusion
\begin{equation*}
\bigcup_\rho \{q \in \Rel(E^u_{S_\rho}) \mid \bar q+u \notin S_\rho\}  \subset  \left\{ q \in \Rel(E^{u}_S) \mid \bar q + u \notin S \right\}.\label{eq:bigcup}
\end{equation*}
Now, assume we have a relation $q \in \Rel(E^u_S)$ with $\bar q+u \notin S$. Then by the $(S_2)$-property we have $\bar q+u \notin S_\rho$ for at least one $\rho \in \sigma(1)$. By applying Lemma~\ref{lem:involved-generators} to $q \in \Rel(E^u_S)$ and $S_\rho$ we obtain $q \in \Rel(E^u_{S_\rho})$. This shows also the other inclusion and we obtain (\ref{eq:S2-T1}). The formula in (\ref{eq:S2-T1-formula}) follows by Theorem~\ref{thm:t1-general-formula}.
\end{proof}

\begin{lemma}
  \label{lem:quotients-under-localisation}
  Let $S$ be an affine monoid and $E \subset S$ an finite subset and $F = E \cup (-B) $ for some $B \subset E \cap S \cap (-S)$. In this situation we have
  \[
    \Rel_k(E^u_S)/ \langle q  \mid u + \bar q \notin S \rangle_k  \cong \Rel_k(F^u_S)/ \langle q \mid u + \bar q \notin S \rangle_k.
  \]
\end{lemma}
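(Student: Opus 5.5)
The plan is to compare the two quotients through an explicit pair of maps: the natural inclusion $\iota\colon \Rel_k(E^u_S)\hookrightarrow \Rel_k(F^u_S)$ one way, and a retraction $\psi$ the other way. The retraction moves every occurrence of a generator $-w$, $w\in B$, to the other side of the relation as $w$.

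First dispose of the trivial case. Every $w\in B$ is a unit of $S$, so $u+w\in S$ if and only if $u\in S$, if and only if $u-w\in S$. If $u\in S$, then $u+w\in S$ for every $w\in S$, so $E^u_S$ is empty. For $-w\in -B$ we also have $u-w\in S$, so $F^u_S$ is empty too, and both sides of the claim vanish. Hence assume $u\notin S$. Then all of $B$ lies in $E^u_S$ and all of $-B$ lies in $F^u_S$, and we have $F^u_S=E^u_S\cup(-B)$. (If some $-w$ already lies in $E$, nothing changes essentially: such an element is itself a unit in $E$ and the argument goes through verbatim.)

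Next define $\psi$ on $k^{F^u_S}$ by $e_{-w}\mapsto -e_w$ for $w\in B$ and by the identity on the basis vectors indexed by $E^u_S$. This is compatible with $\pi\otimes k$, so it maps $\Rel_k(F^u_S)$ into $\Rel_k(E^u_S)$, and $\psi\circ\iota=\mathrm{id}$. The kernel of $\psi$ restricted to $\Rel_k(F^u_S)$ is spanned by the integral relations $r_w\colon w+(-w)=0$ for $w\in B$. Any $q$ can be written as $q=\psi(q)+\sum_w c_w r_w$ with $c_w$ the coefficient of $-w$ in $q$; over $k$ this holds with $c_w\in k$, and the $r_w$ are integral, so their $k$-span lies in the image of $\Rel(F^u_S)\otimes k$.

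Now check that both maps respect the subgroups $\langle q\mid u+\bar q\notin S\rangle$. For $r_w$ we have $r_w^+=e_w+e_{-w}$, $r_w^-=0$, hence $\bar r_w=0$ and $u+\bar r_w=u\notin S$. So every $r_w$ lies in the subgroup for $F$. The inclusion $\iota$ preserves $\bar q$, so it maps the subgroup for $E$ into the subgroup for $F$. Conversely, for an integral $q\in\Rel(F^u_S)$ with $u+\bar q\notin S$ we get $\psi(q)=q-\sum_w c_w r_w$, a difference of elements of the subgroup for $F$. This is not quite enough, since we need $\psi(q)$ in the subgroup for $E$.

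This is the main point to verify, directly on the value $\overline{\psi(q)}$. Passing from $q$ to $\psi(q)$ adds units $w$ to one side, after which common terms cancel. Adding a unit $w$ to $\bar q$ cannot bring $u+\bar q$ into $S$, since $w^{-1}=-w\in S$. Cancelling a common summand $s\in S$ also preserves the condition: if $u+\bar q'-s\in S$, then $u+\bar q'\in S$. Therefore $u+\overline{\psi(q)}\notin S$, and $\psi$ maps the subgroup for $F$ into the subgroup for $E$.

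Hence $\iota$ and $\psi$ descend to mutually inverse maps between the two quotients. We have $\psi\iota=\mathrm{id}$ on the nose, and $\iota\psi(q)\equiv q$ modulo the span of the $r_w$, which lies in the subgroup. Since $\psi$ is defined over $\ZZ$, all statements pass to the $k$-spans, and this gives the claimed isomorphism.
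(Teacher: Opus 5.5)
Your proof is correct and is essentially the paper's argument: your retraction $\psi$ is exactly the paper's substitution $q_i'=q_i-\mu_i q_w$, the key step that the condition $u+\bar q\notin S$ is unaffected because the elements of $B$ are units is the same, and surjectivity comes in both cases from $r_w=q_w$ lying in the subgroup, since $\bar q_w=0$ and $u\notin S$. The differences are cosmetic: you get injectivity from $\psi\circ\iota=\mathrm{id}$ rather than from the paper's intersection identity, you treat all of $B$ at once instead of inducting on $\#B$, and your handling of cancellations in $\overline{\psi(q)}$ is more careful than the paper's formula $\bar q_i'=\bar q_i+\mu_i w$. One small correction: in the case $-w\in E$ the argument does not go through verbatim, because $e_{-w}$ would then be indexed twice; instead discard such $w$ from $B$, which leaves $F$ unchanged.
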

\begin{proof} By using  induction we can reduce the claim to the case that $B=\{w\}$ for some $w \in E \cap S \cap (-S)$.  We note, that either one has $u\in S$ -- in this case $F^u_S$ and $E^u_S$ are both empty -- or $F^u_S=E^u_S \cup \{-w\}$ holds. In the first case the claim holds trivially. Hence, we may assume that we are in the second case.   Now, we want to show the identity
  \begin{equation}
\langle q \in \Rel(E^u_S) \mid u + \bar q \notin S \rangle_k =  \langle q \in \Rel(F^u_S) \mid u + \bar q \notin S \rangle_k \cap \Rel_k(E^u_S).\label{eq:span-identity}
\end{equation}
  Indeed, one inclusion is clear. To show the other inclusion consider any relation from the right-hand-side $q=\sum_ i \lambda_i q_i  \in \Rel_k(E^u_S)$ with $q_i \in \Rel(F^u_S)$ and $\bar q_i + u  \notin S$.
  Assume $-w$ appears with coefficient $\mu_i$ in $q_i$. We by setting $q_i'=q_i - \mu_i q_w$, with $q_w\colon 1 \cdot w +  1 \cdot(-w)=0$, we obtain relations
  $q_i' \in \Rel(E^u_S)$. Since we have $\bar q'_i=\bar q_i+\mu_i w$ and $w+S=S$, it follows that we have the following equivalent conditions
  \[
    \bar q'_i +u  \notin S \quad\Leftrightarrow\quad \bar q_i +u  + \mu_i w  \notin S \quad\Leftrightarrow\quad  \bar q_i +u  \notin S.
  \]
Therefore $q_i'$ is even an element of $\langle q \in \Rel(E^u_S) \mid u + \bar q \notin S \rangle$. We can express $q$ as follows.
  \[
    q=\sum_i \lambda_i q_i' + \Big(\sum_i \lambda_i \mu_i \Big)q_{w}
  \]
  Since $q$ and $\sum_i \lambda_i q_i'$ are elements of $\Rel_k(E^u_S)$, but $q_w \colon 1 \cdot w + 1 \cdot (-w)=0$ is not, we must have $\left(\sum_i \lambda_i \mu_i \right)=0$ and $q$ is therefore an element of the left-hand-side of (\ref{eq:span-identity}).   Now, it follows from (\ref{eq:span-identity}) that we have, a natural inclusion
  \begin{equation}
    \Rel_k(E^u_S)/ \langle q  \mid u + \bar q \notin S \rangle_k  \;\hookrightarrow\; \Rel_k(F^u_S)/ \langle q \mid u + \bar q \notin S \rangle_k.\label{eq:embedding-of-quotients}
  \end{equation}
  Now, take any relation in $\Rel_k(F^u_S)$ by adding an appropriate multiple of $q_w$ we obtain a relation in $\Rel_k(E^u_S)$. But the relation $q_w$ is an element of $\langle q \mid u + \bar q \notin S \rangle$, since $\bar q_w=0$ and we assumed that $u \notin S$. Hence, every class in the right-hand-side of (\ref{eq:embedding-of-quotients}) has a representative in $\Rel_k(E^u_S)$. It follows that the inclusion in (\ref{eq:embedding-of-quotients}) is, in fact, an isomorphism.
\end{proof}
\begin{proposition}
  \label{prop:quotients-under-localisation}
  Consider a monoid $S$ with generating set $E$ and $X=\spec k[S]$. Then for $\tau \prec \sigma=(\QQ_{\geq 0}S)^\vee$ we have
  \[
   (T_{U_\tau}^1)_u \cong \left(\Rel_k(E^u_{S_\tau}) /\langle q \in \Rel(E^u_{S_\tau}) \mid \bar q + u \notin S_\tau \rangle_k \right)^*.
 \]
 although $E$ is not a generating set of $S_\tau$.
\end{proposition}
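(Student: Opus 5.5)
The plan is to enlarge $E$ to a generating set of $S_\tau$ by adjoining the negatives of the generators lying in $\tau^\perp$, apply Theorem~\ref{thm:t1-general-formula} to that set, and then remove the adjoined elements again using Lemma~\ref{lem:quotients-under-localisation}.

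\emph{Step 1: a generating set for $S_\tau$.} Set $B = E \cap \tau^\perp$ and $F = E \cup (-B)$. Since $E$ generates $S$, the face $S\cap\tau^\perp$ is generated by $B$. Indeed, if a sum of elements of $E$ lies in $\tau^\perp$, then each summand pairs nonnegatively with $\tau$, so each summand lies in $\tau^\perp$. Hence $\ZZ(\tau^\perp\cap S)$ is generated as a monoid by $B\cup(-B)$. It follows that $F$ generates $S_\tau = S + \ZZ(\tau^\perp\cap S)$ as a monoid. Moreover $U_\tau = \spec k[S_\tau]$, and $S_\tau$ spans the same group $M$.

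\emph{Step 2: apply the general formula.} Theorem~\ref{thm:t1-general-formula}, applied to the monoid $S_\tau$ with generating set $F$, gives
\[
(T^1_{U_\tau})_u \cong \left(\Rel_k(F^u_{S_\tau}) /\langle q \in \Rel(F^u_{S_\tau}) \mid \bar q + u \notin S_\tau \rangle_k \right)^*.
\]

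\emph{Step 3: remove the added elements.} Apply Lemma~\ref{lem:quotients-under-localisation} with the monoid $S_\tau$ in place of $S$, the finite set $E\subset S\subset S_\tau$, and $B \subset E\cap S_\tau\cap(-S_\tau)$. The last inclusion holds because every $w\in B$ lies in $\tau^\perp\cap S$, so $\pm w\in S_\tau$. The lemma only requires $E$ to be a finite subset of the monoid, not a generating set, so it applies. It yields
\[
\Rel_k(F^u_{S_\tau})/\langle q \mid u+\bar q\notin S_\tau\rangle_k \cong \Rel_k(E^u_{S_\tau})/\langle q \mid u+\bar q\notin S_\tau\rangle_k .
\]
Dualising and combining with Step 2 gives the claim.

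The main obstacle is checking that Lemma~\ref{lem:quotients-under-localisation} really applies in this generality. Its proof has to go through with $S$ replaced by $S_\tau$ and $E$ not generating. The two facts it uses are $w+S_\tau=S_\tau$ for $w\in B$, and $u\notin S_\tau$ in the nontrivial case. One subtlety is the convention for what $\bar q$ means for relations involving the negative elements $-w$. Another is whether $F^u_{S_\tau}=E^u_{S_\tau}\cup\{-w\}$, which needs $u-w\notin S_\tau$; this holds when $u\notin S_\tau$, because $S_\tau$ is invariant under adding $\pm w$. Also, $E$ and $-B$ may overlap (e.g.\ $w$ and $-w$ both in $E$). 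In that case one treats $F$ as the disjoint union indexing the generators. This is harmless, because $\Rel$ is defined via the indexing map $\pi$.
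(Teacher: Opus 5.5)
Your proposal is correct and follows the paper's proof: you extend $E$ by $-B$ with $B\subset E\cap\tau^\perp$ to a generating set $F$ of $S_\tau$, apply Theorem~\ref{thm:t1-general-formula} to $F$, and use Lemma~\ref{lem:quotients-under-localisation} with $S_\tau$ in place of $S$ to pass back to $E$. You also check that $B$ generates $S\cap\tau^\perp$, that the lemma needs no generating hypothesis, and what happens when $u\notin S_\tau$; the paper leaves these details implicit.
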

\begin{proof}
  For every $\tau \prec \sigma$ the set $E$ can be extended to a generating system $F=E \cup (-B)$ of $S_\tau$, where $B \subset E \cap \rho^\perp$.
We now apply Lemma~\ref{lem:quotients-under-localisation} to $E$ and $F$ and obtain
\begin{equation*}
  \Rel_k(E^{u}_{S_\tau})/\langle q \mid \bar q + u \notin S_\tau \rangle_k \cong \Rel_k(F^{u}_{S_\tau})/\langle q  \mid \bar q + u \notin S_\tau \rangle_k
  \cong (T_{U_\tau}^1)^*_u.
\end{equation*}
\end{proof}

\begin{definition}
  \label{def:critical}
   If $(T^1_{U_\rho})_u \neq 0$ holds for at  least one $\rho \in \sigma(1)$, then we say $u\in M$ is a \emph{critical weight} for $T^1_X$. Otherwise we say it is \emph{non-critical}.
\end{definition}

\begin{proposition}
  \label{prop:S2-formula}
  Assume $X=\spec k[S]$ satisfies $(S_2)$, $\sigma=(\QQ_{\geq 0}S)^\vee$ and $E$ is a generating set of $S$. For every non-critical weight $u\in M$  we have
  \begin{equation}
    \label{eq:normal-formula}
    (T^1_X)_u=\left( \Rel_k(E^u_S) \middle/ \sum_{\rho\in \sigma(1)} \Rel_k(E^u_{S_\rho}) \right)^*
  \end{equation}
  with $E^u_{S_\rho}= \{w \in E \mid w +u \notin S_\rho \}$.
\end{proposition}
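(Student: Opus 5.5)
Starting from Lemma~\ref{lem:S2-T1}, formula (\ref{eq:S2-T1-formula}) already gives
\[
(T^1_X)_u \cong \left(\Rel_k(E^u_S) \middle/ \sum_\rho \langle q \in \Rel(E^u_{S_\rho}) \mid \bar q+u \notin S_\rho\rangle_k \right)^*,
\]
so it suffices to show that for each ray $\rho$ and each non-critical $u$ we have
\[
\langle q \in \Rel(E^u_{S_\rho}) \mid \bar q+u \notin S_\rho\rangle_k = \Rel_k(E^u_{S_\rho}).
\]
The inclusion ``$\subseteq$'' is immediate. For ``$\supseteq$'', the plan is to use non-criticality at the ray $\rho$.

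First, interpret the quotient at $\rho$. By Proposition~\ref{prop:quotients-under-localisation}, applied with $\tau=\rho$, we have
\[
(T^1_{U_\rho})_u \cong \left(\Rel_k(E^u_{S_\rho}) /\langle q \in \Rel(E^u_{S_\rho}) \mid \bar q + u \notin S_\rho \rangle_k \right)^*.
\]
Since $u$ is non-critical, the left side vanishes. The dual of a finite-dimensional vector space vanishes only if the space does, so the quotient is zero. This gives exactly the desired equality of the two subspaces for each $\rho$.

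Next, substitute this equality for every $\rho\in\sigma(1)$ into (\ref{eq:S2-T1-formula}). This replaces the denominator by $\sum_\rho \Rel_k(E^u_{S_\rho})$. Here $\Rel_k(E^u_{S_\rho})\subset \Rel_k(E^u_S)$, because $S\subset S_\rho$ implies $E^u_{S_\rho}\subset E^u_S$. The result is formula (\ref{eq:normal-formula}).

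The only step needing care is checking that Proposition~\ref{prop:quotients-under-localisation} applies to the set $E$ itself, which need not generate $S_\rho$, with the same index sets $E^u_{S_\rho}$ as in Lemma~\ref{lem:S2-T1}. That proposition was formulated precisely for this situation, via Lemma~\ref{lem:quotients-under-localisation}, so I expect no real obstacle. I would also note that the spans appearing in both places are the same $k$-spans $\langle\cdot\rangle_k$, so the identification is literally an equality of subspaces of $\Rel_k(E^u_S)$ and not merely an abstract isomorphism.
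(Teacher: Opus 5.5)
Your proposal is correct and follows the paper's proof. Non-criticality together with Proposition~\ref{prop:quotients-under-localisation} gives $\Rel_k(E^u_{S_\rho})=\langle q \in \Rel(E^u_{S_\rho}) \mid \bar q + u \notin S_\rho \rangle_k$ for every ray $\rho$, and substituting this into formula (\ref{eq:S2-T1-formula}) of Lemma~\ref{lem:S2-T1} yields the claim.
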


\begin{proof}
By Proposition~\ref{prop:quotients-under-localisation} we have  $\left(\Rel_k(E^{u}_{S_\tau})/\langle q \mid \bar q + u \notin S_\tau \rangle_k\right)^* 
 \cong (T_{U_\tau}^1)_u$. But  we assumed $T^1_{U_\rho}=0$. Hence we get the identity \[\Rel_k(E^u_{S_\rho})=\langle q \in \Rel(E^u_{S_\rho}) \mid \bar q + u \notin S_\rho \rangle_k.\]
Now, Lemma~\ref{lem:S2-T1}  implies the claim.
\end{proof}

We recover a result by Altmann from \cite{zbMATH00681806}.
\begin{corollary}
  \label{cor:normal-case}
  Assume $X=\spec k[S]$ is normal, $\sigma=(\QQ_{\geq 0}S)^\vee$ and $E$ is a generating set of $S$. Then we have
  \begin{equation*}
    (T^1_X)_u=\left( \Rel_k(E^u) \middle/ \sum_{\rho\in \sigma(1)} \Rel_k(E^u_{\rho}) \right)^*
  \end{equation*}
  with  $E^u=\{w \in E \mid w +u \notin  \sigma^\vee\}$ and $E^u_\rho= \{w \in E \mid w +u \notin  \rho^\vee\}$.
\end{corollary}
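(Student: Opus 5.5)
The plan is to deduce Corollary~\ref{cor:normal-case} from Proposition~\ref{prop:S2-formula}. A normal monoid satisfies $(S_2)$, since $\sigma^\vee\cap M=\bigcap_\rho (\rho^\vee\cap M)$. For normal $S$ we also have $S_\rho=\rho^\vee\cap M$: the localisation $S+\ZZ(\rho^\perp\cap S)$ of a normal monoid is the normal monoid of the face cone $\rho^\vee$. Hence $E^u_S=E^u$ and $E^u_{S_\rho}=E^u_\rho$. It therefore remains to show that every weight $u$ is non-critical, i.e.\ that $(T^1_{U_\rho})_u=0$ for all rays $\rho$.

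For this I use that $U_\rho=\spec k[\rho^\vee\cap M]\cong \A^1\times (k^*)^{n-1}$ is smooth, so $T^1_{U_\rho}=0$. To stay inside the paper's framework, I would instead verify directly that
\[
\Rel_k(E^u_\rho)=\langle q\in\Rel(E^u_\rho)\mid \bar q+u\notin\rho^\vee\rangle_k,
\]
which by Proposition~\ref{prop:quotients-under-localisation} is equivalent to the vanishing. The condition $w+u\notin\rho^\vee$ means $\langle w,v_\rho\rangle<-\langle u,v_\rho\rangle$. Put $c=-\langle u,v_\rho\rangle$. Every $q\in\Rel(E^u_\rho)$ with $\langle \bar q,v_\rho\rangle\geq c$ must be written as a sum of relations with small $\bar q$. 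The natural generators are the relations $w+(-w)=0$ from the extended generating set $F=E\cup(-B)$ with $\bar q=0$; these have $u+0\notin\rho^\vee$ exactly when $c>0$, and for $c\le 0$ the set $E^u_\rho$ is empty. For the remaining relations I would show that every relation of $F^u$ is a combination of relations $q$ with $\langle\bar q,v_\rho\rangle$ minimal, since $F$ generates $\ZZ_{\ge0}$ times a lattice. This is the step I expect to be the main obstacle, and citing smoothness of $U_\rho$ avoids it.

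Once non-criticality is established, Proposition~\ref{prop:S2-formula} gives the stated formula with $\Rel_k(E^u_{S_\rho})=\Rel_k(E^u_\rho)$. The corollary then follows.
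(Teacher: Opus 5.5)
Your proposal is correct and follows essentially the paper's route: reduce to Proposition~\ref{prop:S2-formula} by observing that a normal monoid satisfies $(S_2)$ and that every weight is non-critical because $U_\rho\cong\A^1\times(k^*)^{n-1}$ is smooth. The paper packages this as Serre's criterion ($(S_2)$ plus $(R_1)$). Both arguments then identify $E^u_S=E^u$ and $E^u_{S_\rho}=E^u_\rho$. The direct combinatorial verification you sketch is unnecessary and incomplete as written, but your final argument does not rely on it.
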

\begin{proof}
  Since normality is equivalent to the $(S_2)$-property and regularity in codimension $1$, the assumptions of Proposition~\ref{prop:S2-formula} are automatically fulfilled for every degree $u \in M$. Hence, formula (\ref{eq:normal-formula}) holds. On the other hand, we have $\bar S=S$ and therefore
  $E^u=E^u_S$ and $E^u_\rho=E^u_{S_\rho}$.
\end{proof}
\section{The seminormal case}
\label{sec:seminormal-case}
Our the calculation of $T^1$ according to  formula (\ref{eq:t1-general-formula}) commutes with localisation. In particular, for a seminormal toric variety $X=\spec k[S]$ we have the localisation map
 \begin{equation}
  T^1_X \to T^1_{U_\tau}=(T^1_X)_{\chi^w}\label{eq:localisation}
\end{equation}
for $w \in \sigma^\vee \cap S$, $\tau=w^\perp \cap \sigma$  and
\[U_\tau=\spec k[S+G_\tau] = \spec k[S_w]=\spec k[\underbrace{S-\ZZ_{\geq 0} w}_{=:S_w}]=\spec k[S]_{\chi^w}.\]

  \begin{lemma}
    \label{lem:localisation}
    Assume $X=\spec k[S]$ is an affine  seminormal toric variety corresponding to the monoid $S$ and the cone $\sigma = (\QQ_{\geq 0}S)^\vee$.
    Then the  localisation map (\ref{eq:localisation}) is bijective in those degrees
    $u$, which have the property that $\langle u, v_\rho \rangle > 0$ for every $\rho \in \sigma(1)\setminus \tau(1)$.
  \end{lemma}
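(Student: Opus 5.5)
Throughout I use the explicit description of Theorem~\ref{thm:t1-general-formula} for $X$, and of Proposition~\ref{prop:quotients-under-localisation} for $U_\tau=\spec k[S_\tau]$ with $S_\tau=S_w$. By that proposition the same generating set $E$ may be used on both sides. The localisation map then becomes the map
\[
\Rel_k(E^u_{S_\tau})/\langle q \mid \bar q+u\notin S_\tau\rangle_k \;\longrightarrow\; \Rel_k(E^u_S)/\langle q\mid \bar q+u\notin S\rangle_k .
\]
This is the dual of the localisation map. It is induced by the inclusion $\Rel_k(E^u_{S_\tau})\subset \Rel_k(E^u_S)$, which holds because $S\subset S_\tau$ gives $E^u_{S_\tau}\subset E^u_S$. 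It also sends the subspace of relations with $\bar q+u\notin S_\tau$ into the subspace of relations with $\bar q+u\notin S$. So it suffices to prove two facts in the degrees $u$ under consideration:
\[
E^u_S=E^u_{S_\tau}
\quad\text{and}\quad
\{q\in\Rel(E^u_S)\mid \bar q+u\notin S\}=\{q\in\Rel(E^u_{S_\tau})\mid \bar q+u\notin S_\tau\}.
\]
Both would follow from one key claim.

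\textbf{Key claim.} If $\langle u,v_\rho\rangle>0$ for all $\rho\in\sigma(1)\setminus\tau(1)$, then for every $m\in S$ we have
\[
m+u\in S \iff m+u\in S_\tau .
\]
The direction $\Rightarrow$ is trivial. For $\Leftarrow$, suppose $x=m+u\in S_\tau$. The point $x$ lies in $\tau^\vee$. I would use seminormality (\ref{eq:seminormality}) for both $S$ and $S_\tau$. By the last remark of Section~\ref{sec:seminormal-toric}, $S_\tau$ is seminormal with the same sublattices $G_\gamma$ for faces $\gamma\prec\tau$. Hence $x\in G_\gamma\cap\relint(\tau^\vee\cap\gamma^\perp)$ for a unique face $\gamma\prec\tau$.

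Now I need to show that $x\in\sigma^\vee$ and that the face of $\sigma^\vee$ containing $x$ in its relative interior is again $\gamma^*$. For rays $\rho\notin\tau$ we have
\[
\langle x,v_\rho\rangle=\langle m,v_\rho\rangle+\langle u,v_\rho\rangle>0,
\]
since $m\in\sigma^\vee$ and by the hypothesis on $u$. For rays in $\tau$ the pairing is nonnegative because $x\in\tau^\vee$. Hence $x\in\sigma^\vee$, and the rays of $\sigma$ on which $x$ vanishes are exactly the rays of $\gamma$. So $x\in\relint(\sigma^\vee\cap\gamma^\perp)$, and the same lattice $G_\gamma$ applies. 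By (\ref{eq:seminormality}) for $S$ this gives $x\in S$.

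One subtle point needs checking: $\gamma$ is a face of $\tau$, hence a face of $\sigma$, and the rays of $\sigma$ vanishing on $x$ generate exactly $\gamma$. This holds because the minimal face of $\sigma$ containing those rays is $\gamma$.

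\textbf{Applying the key claim.} With $m=w_i$ the claim gives $E^u_S=E^u_{S_\tau}$. With $m=\bar q$, which lies in $S$, it gives
\[
\bar q+u\notin S \iff \bar q+u\notin S_\tau .
\]
The generating sets of the two subspaces therefore coincide, so the two quotients agree. The displayed map is then the identity, and dualising shows that the localisation map is bijective in degree $u$.

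\textbf{Main obstacle.} I expect the hardest part to be verifying that the isomorphisms of Theorem~\ref{thm:t1-general-formula} and Proposition~\ref{prop:quotients-under-localisation} are compatible with the localisation map (\ref{eq:localisation}). Concretely, I must check that, through Lemma~\ref{lem:quotients-under-localisation}, localisation corresponds to the natural map induced by inclusion and not to some twisted map. I would do this by tracing a homomorphism $\Phi\colon I/I^2\to k[S]$ of degree $u$ through the localisation at $\chi^w$. This amounts to adjoining the generator $-w$ with the relation $q_w$. The extension of $\varphi$ by $q_w\mapsto 0$ is exactly the one produced by the construction in the proof of Lemma~\ref{lem:quotients-under-localisation}.
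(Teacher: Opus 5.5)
Your proof is correct, but it is organised differently from the paper's.

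\textbf{The paper's route.} The paper never compares $X$ with $U_\tau$ through Proposition~\ref{prop:quotients-under-localisation}. It proves that $w'+u\in S \iff w+w'+u\in S$ for all $w'\in S$. For this it uses only the seminormality decomposition (\ref{eq:seminormality}) of $S$, applied to the face $\delta=(w+w'+u)^\perp\cap\sigma\prec\tau$; membership in $G_\delta$ comes from $w\in G_\tau\subset G_\delta$. It then invokes the ``moreover'' part of Theorem~\ref{thm:t1-general-formula}, which already records how $\chi^w$ acts, to conclude that $\chi^w\colon (T^1_X)_u\to(T^1_X)_{u+w}$ is bijective. Implicitly it also uses that every $u+nw$ again satisfies the hypothesis. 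Hence the direct limit defining the localisation in degree $u$ has bijective transition maps.

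\textbf{Your route.} You prove the key claim $m+u\in S\iff m+u\in S_\tau$ via seminormality of $S_\tau$ (the remark on localisations) followed by seminormality of $S$. This is equally valid. Comparing with $U_\tau$ directly spares you the direct-limit step. The price is exactly the compatibility issue you flag, which the paper's route never meets.

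\textbf{The compatibility step.} It does hold. The canonical comparison of $T^1$ computed from the presentations via $E$ and via $F=E\cup(-B)$ is restriction of linear forms from $\Rel(F)$ to $\Rel(E)$. This is inverse to the extension by $q_b\mapsto 0$ in Lemma~\ref{lem:quotients-under-localisation}. Note that you adjoin $-b$ for $b\in E\cap\tau^\perp$, which need not include $-w$ itself.

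\textbf{A shortcut.} You can avoid the compatibility check entirely. Apply your key claim to $u$ and to $u+w$, which is again admissible since $\langle w,v_\rho\rangle\geq 0$. Since $w$ is a unit in $S_\tau$, this gives $m+u\in S\iff m+u+w\in S$, which is precisely the paper's equivalence. The ``moreover'' clause of Theorem~\ref{thm:t1-general-formula} then finishes the proof.
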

  \begin{proof}
    Take any $w' \in S$. We claim that we have $w'+u \in S$ if and only if  $w+w'+u \in S$ holds. One direction is trivial. For the other one assume $w+w'+u \in S$ and set $(w+w'+u)^\perp \cap \sigma=:\delta \prec \tau$. Now, we make two observations
    \begin{enumerate}
    \item We see that $w'+u \in \delta^\perp$, since we have $w+w'+u \in \delta^\perp$ and $-w \in \tau^\perp \subset \delta^\perp$.
    
    \item For every $\rho \in \sigma(1)\setminus \delta(1)$ we have $w'+u \in \relint \rho^\vee$. Indeed, for $\rho \in \sigma(1)\setminus \tau(1)$ we have $w' \in \rho^\vee$ (since $w' \in \sigma^\vee$) and $u \in \relint \rho^\vee$ (by our condition on $u$). For $\rho \in \tau(1) \setminus \delta(1)$ we have $w+w'+u \in \relint \rho^\vee$ and $-w \in \rho^\vee$ (since $w \in \tau^\perp$). Hence, we conclude that $w'+u \in \relint \rho^\vee$. 
    \end{enumerate}
    From both observations together it follows that $w'+u \in \relint (\sigma^\vee \cap \delta^\perp)$. 
    On the other hand, we also have $w \in G_\tau \subset G_\delta$ and $w+w'+u \in \delta^\perp \cap S \subset G_\delta$. Hence, we obtain $w'+u \in G_\delta \cap \relint (\sigma^\vee \cap \delta^\perp) \subset S$ by our condition (\ref{eq:seminormality}) for seminormality.

    The equivalence $w'+u \in S \Leftrightarrow w+w'+u \in S$ implies that for any subset $E \subset S$ we have  $E^u_S=E^{u+w}_S$ and \[\langle q  \in \Rel_k(E^u_S)\mid \bar q + u \notin S \rangle = \langle q  \in \Rel_k(E^{u+w}_S)\mid \bar q + u+w \notin S \rangle.\] 
    Hence, by Theorem~\ref{thm:t1-general-formula} the linear map $\chi^w  \colon (T^1_X)_u  \to (T^1_X)_{u+w}$ is an isomorphism. It follows, that also the localisation map is an isomorphism in these degrees.
  \end{proof}

  The following proposition determines $T^1_X$ in those degrees which are non-positive along at most one ray of $\sigma=(\QQ_{\geq 0}S)^\vee$.
  \begin{proposition}
  \label{prop:t1}
  Assume $X=\spec k[S]$ is an affine  seminormal toric variety corresponding to the monoid $S$ with $\QQ_{\geq 0}S=\sigma^\vee$. For $\rho \in \sigma(1)$ we denote by $G_\rho$ the sublattice of $M \cap \rho^\perp$ generated by $S \cap \rho^\perp$ and by $\ell$ its index.
  
  Given an element $u \in M$ with $\langle u, v_\tau \rangle > 0$ for every $\tau \in \sigma(1)\setminus \{\rho\}$, where $v_\tau$ is the primitive generator of the ray $\tau$.  If $\ell$ is not divisible by $\operatorname{char}(k)$, then for the homogeneous component $(T^1_X)_u$ we have
  \[
    \dim_k (T^1_X)_u =
    \begin{cases}
      \ell - 2 & \ell  > 2,\, \langle u,\, v_\rho \rangle =-1 \\
      1 & \ell=2,\, \langle u,\, v_\rho \rangle =-2,\, u \in G_\rho-2e\\
      0 & \text{otherwise.}
    \end{cases}
  \]
  Here, $e\in M$ is any element with $\langle e, v_\rho \rangle =1$.
\end{proposition}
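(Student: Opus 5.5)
By Lemma~\ref{lem:localisation} applied with $\tau=\rho$ (the hypothesis $\langle u,v_\tau\rangle>0$ for all $\tau\in\sigma(1)\setminus\{\rho\}$ is exactly what it needs), the localisation map $(T^1_X)_u\to(T^1_{U_\rho})_u$ is bijective. So it suffices to compute $(T^1_{U_\rho})_u$. By Proposition~\ref{prop:quotients-under-localisation} we may do this with any generating set of $S_\rho$, using the formula of Theorem~\ref{thm:t1-general-formula}.

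By seminormality,
\[
S_\rho=G_\rho\cup\{m\in M\mid \langle m,v_\rho\rangle>0\}.
\]
Fix $e$ with $\langle e,v_\rho\rangle=1$, coset representatives $c_0,\dots,c_{\ell-1}$ of $(\rho^\perp\cap M)/G_\rho$, and a basis $b_1,\dots,b_{n-1}$ of $G_\rho$. I take
\[
E=\{\pm b_i\}\cup\{e+c_j \mid 0\le j<\ell\}.
\]
This generates $S_\rho$: height $1$ is $e+c_j+G_\rho$, and sums of height-one elements reach every coset at every higher height. The key bookkeeping device is the height $h(m)=\langle m,v_\rho\rangle$, which is additive. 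Hence every relation $q$ satisfies $h(q^+)=h(q^-)$, and $h(\bar q)=\sum_j q^+_j$, where $q^+_j$ are the coefficients of the height-one generators.

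Cases $h(u)\ge 0$. If $u\in S_\rho$, then $E^u_{S_\rho}=\emptyset$ and the component is $0$. If $h(u)=0$ and $u\notin G_\rho$, then $E^u_{S_\rho}=\{\pm b_i\}$. All relations among these are generated by $q_{b_i}\colon b_i+(-b_i)=0$. These have $\bar q+u=u\notin S_\rho$, so they are killed and the component vanishes.

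Case $h(u)=-1$. A height-one generator $e+c_j$ lies in $E^u_{S_\rho}$ unless $u+e+c_j\in G_\rho$, which happens for exactly one $j$. So $E^u_{S_\rho}$ consists of $\{\pm b_i\}$ and $\ell-1$ height-one elements. Write a relation as $(a,g)$, where $a\in\ZZ^{\ell-1}$ records the height-one coefficients; then $\sum a_j=0$. Relations with $a=0$ have height $0$, so $\bar q+u$ has height $-1$ and they are killed. Together with the relations $q_{b_i}$, the killed span therefore contains everything with $a=0$.

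A relation with $h(\bar q)=1$ and $a\neq 0$ would need $e+c_i\equiv e+c_j$ modulo $G_\rho$ with $i\ne j$, which is impossible. Relations with $h(\bar q)\ge 2$ have $\bar q+u$ of height $\ge 1$, which lies in $S_\rho$, so they are not killed. Hence the quotient is the image of $\Rel_k(E^u_{S_\rho})$ under $a$, namely $\{a\in k^{\ell-1}\mid\sum a_j=0\}$, of dimension $\ell-2$. The image is all of this hyperplane because $\{a\in\ZZ^{\ell-1}\mid \sum a_j=0,\ \sum a_jc_j\in G_\rho\}$ has finite index dividing a power of $\ell$ in the lattice $\{a\mid\sum a_j=0\}$, and $\operatorname{char}k\nmid\ell$. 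The same hypothesis is used to identify $\Rel_k$ with $\Rel\otimes k$ here.

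Case $h(u)\le -2$. Now all $\ell$ height-one generators lie in $E^u_{S_\rho}$, and the quotient is again a subquotient of $\{a\in k^\ell\mid\sum a_j=0\}$. This time the relations with $h(\bar q)\le -h(u)-1$ are killed, and those with $h(\bar q)=-h(u)$ are killed exactly when $\bar q+u\notin G_\rho$.

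This is the step I expect to be the main obstacle. I would show that the lattice of admissible $a$ is spanned by relations of height $2$ of the form $(e+c_i)+(e+c_j)=(e+c_k)+(e+c_l)+g$, plus multiples of these when $h(u)<-2$. I would then check which of them survive.

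If $h(u)\le -3$, all height-$2$ relations are killed, so the component is $0$.

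If $h(u)=-2$ and $\ell>2$, I expect that for each $a$ one can choose a height-two representative avoiding the single forbidden coset of $\bar q$, so everything is killed. I would verify this by using the freedom to reorder the coset sum, which is where $\ell>2$ is needed.

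If $h(u)=-2$ and $\ell=2$, the only nonzero admissible direction is $a=(2,-2)$, i.e. $2(e+c_1)=2(e+c_0)+g$. Its $\bar q$ lies in $2e+G_\rho$, so it is killed unless $u+2e\in G_\rho$. This gives the extra one-dimensional component exactly for $u\in G_\rho-2e$.
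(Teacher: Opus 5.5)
Your reduction and model computation are the paper's: Lemma~\ref{lem:localisation} with $\tau=\rho$, then the monoid $G_\rho\cup\{h>0\}$ with generators $\pm b_i$ and height-one coset representatives. That is Lemma~\ref{lem:t1}, where the paper first puts $G$ in the diagonal form $\prod n_i\ZZ$. Your cases $h(u)\ge -1$ and $\ell=2$, $h(u)=-2$ are correct, and projecting relations to their height-one coefficients $a$ is a tidy way to organise the count. Two small points. For $h(u)=-1$ the answer is $\max\{\ell-2,0\}$. The hypothesis $\operatorname{char}k\nmid\ell$ is already needed at $h(u)=0$, $u\notin G_\rho$, to know that $\Rel_k(\{\pm b_i\})$ is spanned by the $q_{b_i}$; this is exactly where the paper's positive-characteristic remark finds extra dimensions.

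The genuine gap is the case $h(u)\le -2$. For $\ell>2$ you only state what you expect, and the spanning claim behind $h(u)\le-3$ is also unproved. This is the longest part of the paper's proof. Write $H=(\rho^\perp\cap M)/G_\rho$, index the height-one generators by $x\in H$, and set $V=\{a\in k^H\mid \sum_x a_x=0\}$. Two facts are needed.

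(i) The $k$-span of the height-two vectors $e_x+e_y-e_z-e_w$ with $x+y=z+w$ is all of $V$. Only the $k$-span matters, not the integral lattice. This follows from $\sum_{y\in H}(e_x+e_y-e_{x+y}-e_0)=\ell(e_x-e_0)$ and $\ell\in k^\times$. For $h(u)\le -3$ every such relation is killed, so the component vanishes.

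(ii) For $h(u)=-2$, let $\eta\in H$ be the class of $-u-2e$. A height-two relation is killed iff the class $x+y$ of its positive part differs from $\eta$. Suppose $x+y=z+w=\eta$ with $x\neq z$. Choose $t\in H\setminus\{y,z\}$, which is possible exactly because $\ell\ge 3$, and put $t'=t+x-z$. Then
\[
e_x+e_y-e_z-e_w=(e_x+e_t-e_z-e_{t'})+(e_y+e_{t'}-e_w-e_t).
\]
After adding suitable $\pm b_i$, both summands come from height-two relations without cancellation. Their positive parts have classes $x+t\neq\eta$ and $w+t\neq\eta$, so both are killed. The remaining difference has $a=0$, so it lies in the span of the killed $q_{b_i}$. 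Hence every height-two relation lies in the killed span, and (i) gives $(T^1_X)_u=0$.

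With these two steps, or the paper's explicit leading-term construction of enough independent killed relations, your proof is complete. Without them, the vanishing for $h(u)\le -2$, $\ell>2$ is only asserted.
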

\begin{proof}
  By Lemma~\ref{lem:localisation} we obtain an isomorphism $(T^1_X)_u \cong (T^1_{U_\rho})_u$, where $U_\rho=\spec k[S + G_\rho]=\spec k[S + (-S \cap \rho^\perp)]$. Now, the claim follows from Lemma~\ref{lem:t1} below.
\end{proof}

\begin{lemma}
  \label{lem:t1}
  Let $G \subset \ZZ^{n}$ be a sublattice whose index $\ell$ is not divisible by $\operatorname{char}(k)$. Consider the seminormal toric variety $X=\spec k[S]$ with
  \[
    S= \{ (u,a) \in \ZZ^{n}\times \ZZ_{\geq 0} \mid u \in G \text{ or } a > 0\} \subset \ZZ^{n+1}=:M.
  \]
  Then
    \[
    \dim_k (T^1_X)_{(u,a)} =
    \begin{cases}
      \ell - 2 & \ell > 2,\, a =-1 \\
      1 & \ell=2,\, a =-2,\, u \in G\\
      0 & \text{otherwise.}
    \end{cases}
  \]
\end{lemma}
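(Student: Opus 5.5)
The plan is to apply Theorem~\ref{thm:t1-general-formula} directly, using a convenient generating set and sorting the degree $(u,a)$ by its last coordinate. Fix a basis $g_1,\dots,g_n$ of $G$ and representatives $c_1,\dots,c_\ell$ of the cosets of $\ZZ^n/G$. Then
\[E=\{(\pm g_j,0)\}\cup\{(c_k,1)\mid 1\le k\le \ell\}\]
generates $S$, because every $(x,a)$ with $a>0$ equals $(c_k,1)+(a-1)(c_1,1)+(g,0)$ for a suitable $k$ and some $g\in G$, after adjusting $g$. I will use the following structure of $\Rel(E)$ throughout. It is generated by two kinds of relations: the relations $(g,0)+(-g,0)=0$ together with the relations among the $(\pm g_j,0)$, all of which have $\bar q$ at level $0$; and relations of the form $(c_k,1)+(c_l,1)=(c_m,1)+(c_{m'},1)+\sum(\pm g_j,0)$, whose $\bar q$ lies at level $2$. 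This follows by grouping generators by level, since a relation only involves $(c_k,1)$'s in pairs of equal total level.

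I will then do a case analysis on $a$.

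If $a\ge1$, every $u+w$ has level $\ge1$, so it lies in $S$. Hence $E^u_S=\emptyset$ and the component vanishes.

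If $a=0$, all $(c_k,1)$ drop out of $E^u_S$. If $u\in G$, the $(\pm g_j,0)$ drop out too. If $u\notin G$, then $E^u_S=\{(\pm g_j,0)\}$, and every relation $q$ among these has $\bar q+u\in u+G$ at level $0$, so $\bar q+u\notin S$. All relations are therefore killed and the component is $0$.

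If $a\le-3$, all generators lie in $E^u_S$. Every generating relation above has $\bar q+u$ at level $\le -1$, so all of $\Rel(E)$ is killed and the component is $0$.

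If $a=-1$, the point $u+(c_k,1)$ lies at level $0$, so it is in $S$ exactly for the unique $k_0$ with $u+c_{k_0}\in G$. Thus $E^u_S$ consists of the $(\pm g_j,0)$ and the $\ell-1$ remaining $(c_k,1)$. The relations to be killed are exactly the level-$0$ ones, since level-$2$ relations give $\bar q+u$ at level $1$, which lies in $S$. The quotient is therefore the relation module of the images of $\ell-1$ elements $(c_k,1)$ in $(\ZZ^n/G)\times\ZZ$, tensored with $k$. This is the kernel of $k^{\ell-1}\to k$ (the sum map). The finite part $\ZZ^n/G$ contributes nothing after tensoring, because $\operatorname{char}k\nmid\ell$. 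This is exactly where the characteristic hypothesis enters: it lets me identify $\Rel_k$ with $\Rel\otimes k$ modulo the level-$0$ part. The kernel has dimension $\ell-2$ when $\ell\ge2$, and the component is $0$ when $\ell\le2$. This gives the first case.

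The case $a=-2$ is the main obstacle. Here $E^u_S=E$. The level-$0$ relations are killed, and a level-$2$ relation $q$ is killed precisely when $u+\bar q\notin G$. The quotient $\Rel_k(E)/\langle\text{level }0\rangle$ is again the relation space $R$ of the $\ell$ points $(c_k,1)$, which has dimension $\ell-1$. For each pair of cosets, the relation $(c_k,1)+(c_l,1)=(c_m,1)+(c_{m'},1)+(g,0)$ is killed unless $c_k+c_l\equiv -u \bmod G$, that is, unless its fixed total class is $-u$.

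My first attempt will be to show that relations whose total class is some $h\ne -u$ already span $R$ once there is a second available class. I would argue via the differences $e_k+e_l-e_m-e_{m'}$, using the group structure of $\ZZ^n/G$ to connect any relation of class $-u$ to relations of other classes. The degenerate situation is $\ell=2$ with $u\in G$. There $R$ is spanned by the single relation $2(c_2,1)=2(c_1,1)+(g,0)$, with $c_1=0$, of class $0=-u$, so it survives and the dimension is $1$. For $\ell=2$ with $u\notin G$, and for $\ell=1$, everything is killed. Verifying the spanning claim for $\ell\ge3$ with care is the step I expect to require the most bookkeeping.
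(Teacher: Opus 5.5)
Your setup and most of the case analysis are correct. The generating set works, and so does the observation that a relation involving some $(c_k,1)$ has level $0$ or at least $2$ (level $1$ would force $c_k\equiv c_m \bmod G$). The computations for $a\geq 0$, for $a=-1$, and for $a=-2$ with $\ell\leq 2$ are fine.

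\textbf{Main gap.} The case $\ell\geq 3$, $a=-2$ is not proved, and it is the heart of the lemma. The paper spends most of its proof there: it normalises $G=\prod n_i\ZZ$ and, for all but $n+1$ generators $w$, exhibits an explicitly killed relation $q(w)$ with leading coordinate $w$. You correctly reduce this case to a spanning statement. Put $A=\ZZ^n/G$ and $h_0=-[u]$. You must show that the vectors $e_a+e_b-e_c-e_d$ with $a+b=c+d\neq h_0$ span $H=\{x\in k^A\mid \sum_a x_a=0\}$. At that point you only announce a strategy. The ``second available class'' heuristic is also not the mechanism: in $A=(\ZZ/2)^2$ the classes $(1,0)$ and $(0,1)$ alone span only a plane in the $3$-dimensional $H$. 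What has to be used is that \emph{every} class other than $h_0$ is admissible.

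\textbf{Second, smaller gap.} For $a\leq -3$, relations of level $\geq 3$ need not be killed. So you genuinely need that level-$0$ and level-$2$ relations generate $\Rel(E)$. This is true: the $(c_k,1)$-parts of relations form the square $I^2$ of the augmentation ideal of $\ZZ[A]$, which is additively generated by $[a+b]-[a]-[b]+[0]$. But ``grouping by level'' does not prove it.

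\textbf{A fix for both.} Call a class admissible if it differs from $h_0$ (for $a=-2$), or call every class admissible (for $a\leq -3$). The relevant span equals $H$ iff every $f\colon A\to k$ with $f(a)+f(b)=f(c)+f(d)$ whenever $a+b=c+d$ is admissible is constant. Put $g=f-f(0)$; then $g(x+y)=g(x)+g(y)$ whenever $x+y$ is admissible. Summing $g(a)+g(h-a)=g(h)$ over all $a\in A$ gives $\ell\, g(h)=2\sum_{a\in A}g(a)$ for every admissible $h$. Since $\operatorname{char}k\nmid\ell$, $g$ equals a constant $c$ on the admissible classes.
\begin{itemize}
\item If all classes are admissible (the case $a\leq -3$), then $c=g(0)=0$.
\item For $a=-2$, $\ell\geq 3$ and $h_0\neq 0$: again $c=g(0)=0$. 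Any $y\notin\{0,h_0\}$ then gives $g(h_0)=g(h_0+y)-g(y)=0$.
\item For $a=-2$, $\ell\geq 3$ and $h_0=0$: choose $x,y\neq 0$ with $x+y\neq 0$. Take $y=x$ if some $x$ has $2x\neq 0$; otherwise $A$ is elementary abelian of order at least $4$, so take two distinct nonzero elements. Then $c=2c$, so $c=0$.
\end{itemize}
Hence $f$ is constant and the component vanishes.

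With this added, your route becomes a complete proof. It is somewhat cleaner than the paper's, since it needs neither the Smith normal form nor an explicit triangular family of relations.
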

\begin{proof}
   We may assume that $G =\prod_{i=1}^n n_i \ZZ$. As a generating set of $S$ we may pick
  \[E=\underbrace{\{(\pm n_1e_1,0), \ldots, (\pm n_n e_n,0)\}}_{E'} \cup \underbrace{\{(u_1,\ldots,u_n,1) \mid \forall_i \colon 0\leq u_i \leq n_i-1\}}_{E''}.\]
  Then $E^{(u,a)}_S=\emptyset$ for $a \geq 1$ and $(T^1_X)_{(u,a)}=0$ according to Theorem~\ref{thm:t1-general-formula}.
  Similarly for $a=0$ and $u \in G$ we have $E^{(u,a)}_S = \emptyset$ and $(T^1_X)_{(u,a)}=0$.
  For $a=0$ and $u \notin G$ we have $E^{(u,a)}_S=\{n_1 e_1,\ldots,n_ne_n \}$ and
  $\Rel(E^{(u,a)}_S)$  is generated by the relations $q_i \colon 1\cdot (n_ie_i,0) + 1\cdot (-n_ie_i,0)=0$, but
  $\bar q_i =0$ and clearly $0 \notin G-u$. That means
  $\langle q \in \Rel(E^u_S) \mid \bar q + u \notin S \rangle=\Rel(E^{(u,a)}_S)$ and we obtain again $(T^1_X)_{(u,a)}=0$.

  We now distinguish several cases according to the values of $\ell$ and $a$.

  \textbf{Case $\ell=1$:}We have $E^{(u,-1)}_S=E'$ and $E^{(u,a)}_S=E$  for $a < -1$. In either case
   $\Rel(E^{u,a}_S)$  is generated by the relations $q_i \colon 1\cdot (n_ie_i,0) + 1\cdot (-n_ie_i,0)=0$ for $i=1,\ldots,n$
  with $\bar q_i + (u,a) =0 +(u,a) \notin S$, since $a <0$. Hence, as before we have $(T^1_X)_{(u,a)}=0$.

  \textbf{Case $\ell=2$:} We may assume that $n_1=2$ and $n_2=n_3=\ldots=n_n=1$.
  \textbf{Subcase $\ell=2$, $a=-1$:} Here 
  either
  \[E^{(u,a)}_S=\{(\pm 2e_1,0), (\pm e_2,0),\ldots, (\pm e_n,0), (e_1,1)\}\]
  holds or
  \[E^{(u,a)}_S=\{(\pm 2e_1,0), (\pm e_2,0), \ldots, (\pm e_n,0), (0,1)\},\]
  depending on whether $u \in G$ or not. In either case $\Rel(E^{(u,a)}_S)$
  is generated by the relations $q_i \colon 1\cdot(n_ie_i,0) + 1\cdot(-n_ie_i,0)=0$ for $i=1,\ldots,n$
  with $\bar q_i + (u,a) =0 +(u,a) \notin S$, since $a <0$. Hence, as before we have $(T^1_X)_{(u,a)}=0$.

  \textbf{Subcase $\ell=2$, $a=-2$:} Now, $E^{(u,a)}_S=E$ holds and  $\Rel(E^{(u,a)}_S)$ is generated by the relations $q_i \colon 1\cdot (n_ie_i,0) + 1\cdot (-n_ie_i,0)=0$ for $i=1,\ldots,n$ and additionally \[q \colon 1\cdot(2e_1,0) + 2\cdot (0,1) - 2 \cdot (e_1,1)=0\]
  Once again, we have $\bar q_i + (u,a) \notin S$, but we have $\bar q+(u,a)=(2e_1+u,-2) \in S$ if and only if $u \in 2\ZZ\times \ZZ^{n-1}$.
Hence, we have $\dim (T^1_x)_{(u,-2)} = 1$ if $u\in 2\ZZ\times \ZZ^{n-1}$ and $\dim (T^1_x)_{(u,-2)} = 0$ otherwise. 

\textbf{Subcase $\ell >2$, $a=-1$:}
Since $\{(u_1,\ldots,u_n) \mid \forall_i \colon 0\leq u_i \leq n_i-1\}$
contains exactly one representative for every class in $\ZZ^n/G$ there is exactly
one element $w \in \{(u_1,\ldots,u_n) \mid \forall_i \colon 0\leq u_i \leq n_i-1\}$
with $(w,1)+u \in S$. Hence, we have 
\[E^{(u,-1)}_S=E \setminus \{(w,1)\},\]
We have $\# E^{(u,-1)}_S = \ell + 2n - 1$ and $E^{(u,-1)}$ spans $M \otimes k$. Since $\dim M \otimes k = \operatorname{rk} M =n+1$ we get
$\dim \Rel_k(E^{(u,-1)}_S)=n+\ell-2$.
Now, the integral relation in $\Rel(E^{(u,-1)}_S)$ involve either none or at least $2$
elements from $E''$. The integral relation not involving any element from $E''$ are
again the ones generated freely by \[q_i \colon (n_ie_i,0) + (-n_ie_i,0)=0.\]
Let now $q$ be an integral relation from $\Rel(E^{(u,-1)}_S)$ involving at least two elements from $E''$
then $\bar q=(u',b)$ with $b \geq 2$. In particular, $\bar q+u \in S$.
Hence $\langle q \in \Rel(E^{(u,-1)}_S) \mid \bar q+u \notin S \rangle_k$ is of dimension $n$ with
basis consisting of the $q_i$ with $i=1,\ldots,n$ from above. We obtain
\[
\dim (T^1_X)_{(u,-1)}= \dim \Rel_k(E^{(u,-1)}_S)-\dim \langle q \mid \bar q+u \notin S \rangle_k =\ell-2.
\]

\textbf{Subcase $\ell>2$, $a=-2$:}
We have $E^{(u,a)}_S=E$ and $\Rel_k(E) \subset k^{E}$. We order the coordinates of $k^E$, such
that \[-n_1e_1 > \ldots > -n_ne_n > n_1e_1 > \ldots n_ne_n\] and the remaining ones are smaller and ordered weight-lexicographically.

We claim that for every
\[
  w \in E\setminus \left(\{(e_i,1) \mid i=1,\ldots,n\} \cup \{(0,1)\} \right)
\]
there is a relation $q(w) \in \langle q \in \Rel(E) \mid \bar q+u \notin S \rangle_k$ with leading non-zero entry being at the coordinate corresponding to $w$, i.e. $q(w)_v = 0$ for $v > w$ and $q(w)_w \neq 0$. It then follows that
\[\dim \langle q \in \Rel(E) \mid \bar q+u \notin S \rangle_k \geq \ell+n-1 =\dim \Rel_k(E).\]
Hence, we obtain $\langle q \in \Rel(E) \mid \bar q+u \notin S \rangle_k  =\dim \Rel_k(E)$ and therefore $T_X^1(u)=0$.

To show our claim we need to find appropriate relations $q(w)$.
For $w=-n_ie_i$ we take \[q(w) \colon 1\cdot(-n_ie_i) + 1 \cdot n_ie_i=0.\]
For $w=n_ie_i$ we may choose $q(w)$ as
\begin{equation*}
  \label{eq:q(w)-ei}
  1 \cdot (n_ie_i,0)+ 2 \cdot (0,1) + (-1)\cdot ((n_i-1)e_i,1) + (-1)\cdot (e_i,1)=0
\end{equation*}
if $\overline{q(w)}=(n_ie_i,2) \notin G-(u,-2)$. Otherwise we have $(n_ie_i+e_j,2) \notin G-(u,-2)$ and we may choose $q(w)$ as
\begin{equation*}
  \label{eq:q(w)-ei-irregular}
  1 \cdot (n_ie_i,0)+ 1\cdot (e_j,1) + 1 \cdot (0,1) + (-1)\cdot ((n_i-1)e_i,1) + (-1)\cdot (e_i+e_j,1)=0
\end{equation*}
with $j\neq i$. This is possible whenever there is at least one $j \neq i$ with $n_j>1$. On the other hand, if $n_j=1$ for all $j\neq i$ then $n_i=\ell > 2$ by our assumption. Hence, we may take $j=i$ and choose $q(w)$ as
\begin{equation*}
  \label{eq:q(w)-ei-n=1}
  1 \cdot (n_ie_i,0)+ 1\cdot (e_i,1) + 1\cdot(0,1) + (-1)\cdot ((n_i-1)e_i,1) + (-1)\cdot (2e_i,1)=0.
\end{equation*}

For the remaining choices of $w$ we have $w-(e_i,0) \in E \setminus \{(0,1)\}$ for some $i\in \{1,\ldots,n\}$.
Let us write $w=(w',1)$ and consider
\begin{equation*}
\label{eq:q(w)-regular}
q(w) \colon 1 \cdot (w',1) + 1\cdot(0,1) + (-1)\cdot (w'-e_i,1) +(-1)\cdot (e_i,1)=0.
\end{equation*}
If $\overline{q(w)}= (w',2) \notin \left(G-(u,-2)\right)$ it is clear that $q(w)$ is an element of $\langle q \mid \bar q+u \notin S \rangle_k$. Otherwise we have $(w'-e_i,2) \notin \left(G-(u,-2)\right)$ and also $(0,2) \notin \left(G-(u,-2)\right)$. Hence, the relation
\[ q \colon 1 \cdot (n_ie_i,0)+ 2 \cdot (0,1) + (-1)\cdot ((n_i-1)e_i,1) + (-1)\cdot (e_i,1)=0\]
as well as $q-q(w)$ are in $\langle q \in \Rel(E) \mid \bar q+u \notin S \rangle_k$ and, therefore, also $q(w)$. Indeed, 
$\bar q= (n_ie_i,2)$ and $\overline{q-q(w)}=((n_i-1)e_i+w',2)$.

\textbf{Subcase $\ell>1$, $a<-2$:}
We have $E^{(u,a)}_S=E$ with $\# E = \ell+2n$. The vector space of relations has then dimension
$\ell+n-1$. We have seen in the discussion of the previous case that  we have
$\ell+n-1$ linearly independent integral relations involving at most $2$ elements from $E''$. But all those elements now
lie in $\langle q \mid \bar q+u \notin S \rangle_k$. Hence, we must have
$\langle q \mid \bar q+u \notin S \rangle_k = \Rel_k(E^{(u,a)}_S)$ and, therefore, $(T^1_X)_{(u,a)}=0$.
\end{proof}

\begin{remark}
  Lemma~\ref{lem:t1} provides an explicit characterisation of the critical weights for a seminormal toric variety.
  Indeed, $u \in M$ is critical if and only if either $\langle u, v_\rho \rangle =-1$ for one $\rho \in \sigma(1)$ with $\ell_\rho > 2$ or $u \in G_\rho-2e$ for some $\rho \in \sigma(1)$ with $\ell_\rho=2$ and some $e \in M$ with $\langle e, v_\rho \rangle=1$.
\end{remark}

\begin{remark}
  To cover also the case when $\ell$ is divisible by $\operatorname{char}(k)=p$, we need to adapt the formulas for the dimensions in Proposition~\ref{prop:t1} and Lemma~\ref{lem:t1} as follows.
  \[
    \dim_k (T^1_X)_u =
    \begin{cases}
      \ell - 2 & \ell > 2,\, \langle u,\, v_\rho \rangle =-1 \\
      1 & \ell=2,\, \langle u,\, v_\rho \rangle =-2,\, u \in G_\rho-2e\\      
      \dim_k(G_\rho \otimes k) & \ell=2,\, \langle u,\, v_\rho \rangle =-1,\\
      \dim_k(G_\rho \otimes k)  & \langle u,\, v_\rho \rangle =0,\, u \notin G_\rho\\
      0 & \text{otherwise.}
    \end{cases}\]
  Indeed, the additional dimensions in this formula occur whenever $\# E^u_S \leq n+1$ (i.e. there are no non-trivial integral relations), but $E \cap \rho^\perp \subset E^u_S$. Those elements in the intersection which vanish modulo $p$ then provide linearly independent relations in $\Rel_k(E^u_S)/\langle q \mid \bar q +u \notin S \rangle_k$.
\end{remark}
\section{The case of seminormal surfaces}
\label{sec:surfaces}
In this section we apply our general results to the case of seminormal toric surfaces. Here, the $(S_2)$ property is automatically fulfilled and the situation is simple enough to explicitly determine dimensions of the homogeneous components of $T^1_X$. We restrict ourselves to the case of characteristic $0$.

Our setup is the same as  before. We consider a seminormal monoid $S\subset M \cong \ZZ^2$ and $\sigma=(\QQ_{\geq 0}S)^\vee$.  The case when $\sigma$ is one-dimensional is covered by Lemma~\ref{lem:t1}. Hence, we may assume that $\sigma$ is two-dimensional, spanned by two extremal rays $\rho_1$ and $\rho_2$ with primitive generators $v_1$ and $v_2$, respectively. The orders of the corresponding Galois groups are denoted by $\ell_1$ and $\ell_2$. We set $X=\spec k[S]$ and $\tilde X=\spec k [\bar S]$ is the normalisation. To simplify notation we set $S_i=S_{\rho_i}$, and $U_i=\spec k[S_i]$  for $i=1,2$. 

Consider a minimal generating set $\bar E=\{w_0,\ldots, w_{s+1}\}$ for $\bar S=\sigma^\vee \cap M$. If the cardinality of the generating set is $2$ (i.e. $\tilde X \cong \A^2$) we also add the sum of the two elements to the generating set in order to have at least three generators and allow for a unified treatment of this case. We order the elements $w_0,\ldots,w_{s+1}$ in the usual way, such that
$w_0 \in \rho_1^\perp$ and $w_{s+1} \in \rho_2^\perp$ and $w_{i-1}+w_{i+1}=a_i w_i$ holds for $i=1,\ldots,s$. Moreover, we set $g_1=w_0$, $g_2=w_{s+1}$. 

Let us number the lattice points on the line $\{ u \mid \langle v_i, u \rangle=j\}$
consecutively in such a way that the ones with positive numbers are exactly the ones
which pair positively with the other ray generator of $\sigma$. Let us write $u_{i}^{j}(m)$ for the $m$th lattice point within this numbering, i.e. $u_{i}^{j}(1)$ is the first lattice point on the line which lies in the interior of $\sigma^\vee$ if $j\neq 0$ and the first nonzero lattice point in $\sigma^\vee$ if $j=0$. We then have $u_1^0(m)=mw_0$, $u_2^0(m)=mw_{s+1}$, $u_1^1(m)=w_1+(m-1)w_0$,
$u_2^1(m)=w_s+(m-1)w_{s+1}$. See Figure~\ref{fig: u_i^j Notation} for an example.

\begin{figure}[h]
  \centering
  \begin{tikzpicture}[scale=0.65, >=stealth]
  \begin{scope}
    \clip (-2.6,-2.6) rectangle (5.9,7.9);
    \fill[blue!7] (0,0) -- (60,0) -- (60,180) -- cycle;
    \fill[red!7]  (0,0) -- (-60,0) -- (-60,-180) -- cycle;
    \draw[blue!55!black, thick] (0,0) -- (60,0);
    \draw[blue!55!black, thick] (0,0) -- (60,180);
    \draw[red!55!black, thick, opacity=.6]  (0,0) -- (-60,0);
    \draw[red!55!black, thick, opacity=.6]  (0,0) -- (-60,-180);
  \end{scope}
  \draw[->, gray!70] (-2.7,0) -- (6.0,0);
  \draw[->, gray!70] (0,-2.7) -- (0,8.0);
  \fill[gray!40] (-2,-2) circle (1.2pt);
  \fill[gray!40] (-2,-1) circle (1.2pt);
  \fill[gray!40] (-2,0) circle (1.2pt);
  \fill[gray!40] (-2,1) circle (1.2pt);
  \fill[gray!40] (-2,2) circle (1.2pt);
  \fill[gray!40] (-2,3) circle (1.2pt);
  \fill[gray!40] (-2,4) circle (1.2pt);
  \fill[gray!40] (-2,5) circle (1.2pt);
  \fill[gray!40] (-2,6) circle (1.2pt);
  \fill[gray!40] (-2,7) circle (1.2pt);
  \fill[gray!40] (-1,-2) circle (1.2pt);
  \fill[gray!40] (-1,-1) circle (1.2pt);
  \fill[gray!40] (-1,0) circle (1.2pt);
  \fill[gray!40] (-1,1) circle (1.2pt);
  \fill[gray!40] (-1,2) circle (1.2pt);
  \fill[gray!40] (-1,3) circle (1.2pt);
  \fill[gray!40] (-1,4) circle (1.2pt);
  \fill[gray!40] (-1,5) circle (1.2pt);
  \fill[gray!40] (-1,6) circle (1.2pt);
  \fill[gray!40] (-1,7) circle (1.2pt);
  \fill[gray!40] (0,-2) circle (1.2pt);
  \fill[gray!40] (0,-1) circle (1.2pt);
  \fill[gray!40] (0,1) circle (1.2pt);
  \fill[gray!40] (0,2) circle (1.2pt);
  \fill[gray!40] (0,3) circle (1.2pt);
  \fill[gray!40] (0,4) circle (1.2pt);
  \fill[gray!40] (0,5) circle (1.2pt);
  \fill[gray!40] (0,6) circle (1.2pt);
  \fill[gray!40] (0,7) circle (1.2pt);
  \fill[gray!40] (1,-2) circle (1.2pt);
  \fill[gray!40] (1,-1) circle (1.2pt);
  \fill[blue!70] (1,0) circle (2.6pt);
  \fill[blue!70] (1,1) circle (2.6pt);
  \fill[blue!70] (1,2) circle (2.6pt);
  \fill[blue!70] (1,3) circle (2.6pt);
  \fill[gray!40] (1,4) circle (1.2pt);
  \fill[gray!40] (1,5) circle (1.2pt);
  \fill[gray!40] (1,6) circle (1.2pt);
  \fill[gray!40] (1,7) circle (1.2pt);
  \fill[gray!40] (2,-2) circle (1.2pt);
  \fill[gray!40] (2,-1) circle (1.2pt);
  \fill[blue!70] (2,0) circle (2.6pt);
  \fill[blue!70] (2,1) circle (2.6pt);
  \fill[blue!70] (2,2) circle (2.6pt);
  \fill[blue!70] (2,3) circle (2.6pt);
  \fill[blue!70] (2,4) circle (2.6pt);
  \fill[blue!70] (2,5) circle (2.6pt);
  \fill[blue!70] (2,6) circle (2.6pt);
  \fill[gray!40] (2,7) circle (1.2pt);
  \fill[gray!40] (3,-2) circle (1.2pt);
  \fill[gray!40] (3,-1) circle (1.2pt);
  \fill[blue!70] (3,0) circle (2.6pt);
  \fill[blue!70] (3,1) circle (2.6pt);
  \fill[blue!70] (3,2) circle (2.6pt);
  \fill[blue!70] (3,3) circle (2.6pt);
  \fill[blue!70] (3,4) circle (2.6pt);
  \fill[blue!70] (3,5) circle (2.6pt);
  \fill[blue!70] (3,6) circle (2.6pt);
  \fill[blue!70] (3,7) circle (2.6pt);
  \fill[gray!40] (4,-2) circle (1.2pt);
  \fill[gray!40] (4,-1) circle (1.2pt);
  \fill[blue!70] (4,0) circle (2.6pt);
  \fill[blue!70] (4,1) circle (2.6pt);
  \fill[blue!70] (4,2) circle (2.6pt);
  \fill[blue!70] (4,3) circle (2.6pt);
  \fill[blue!70] (4,4) circle (2.6pt);
  \fill[blue!70] (4,5) circle (2.6pt);
  \fill[blue!70] (4,6) circle (2.6pt);
  \fill[blue!70] (4,7) circle (2.6pt);
  \fill[gray!40] (5,-2) circle (1.2pt);
  \fill[gray!40] (5,-1) circle (1.2pt);
  \fill[blue!70] (5,0) circle (2.6pt);
  \fill[blue!70] (5,1) circle (2.6pt);
  \fill[blue!70] (5,2) circle (2.6pt);
  \fill[blue!70] (5,3) circle (2.6pt);
  \fill[blue!70] (5,4) circle (2.6pt);
  \fill[blue!70] (5,5) circle (2.6pt);
  \fill[blue!70] (5,6) circle (2.6pt);
  \fill[blue!70] (5,7) circle (2.6pt);
  \draw[blue!65, dashed, thick] (-2.4,1) -- (5.8,1);
  \node[blue!65!black, font=\scriptsize, anchor=west] at (5.8,1) {$u_1^1(m)$};
  \node[blue!65!black, font=\scriptsize, anchor=west] at (5.8,0) {$u_1^0(m)$};
  \fill[gray!55!black] (-1,1) circle (2.4pt);
  \node[gray!55!black, font=\scriptsize, anchor=south, yshift=2.5pt] at (-1,1) {$-1$};
  \fill[gray!55!black] (0,1) circle (2.4pt);
  \node[gray!55!black, font=\scriptsize, anchor=south, yshift=2.5pt] at (0,1) {$0$};
  \fill[blue!70!black] (1,1) circle (2.4pt);
  \node[blue!70!black, font=\scriptsize, anchor=south, yshift=2.5pt] at (1,1) {$1$};
  \fill[blue!70!black] (2,1) circle (2.4pt);
  \node[blue!70!black, font=\scriptsize, anchor=south, yshift=2.5pt] at (2,1) {$2$};
  \fill[blue!70!black] (3,1) circle (2.4pt);
  \node[blue!70!black, font=\scriptsize, anchor=south, yshift=2.5pt] at (3,1) {$3$};
  \fill[blue!70!black] (4,1) circle (2.4pt);
  \node[blue!70!black, font=\scriptsize, anchor=south, yshift=2.5pt] at (4,1) {$4$};
  \fill[blue!70!black] (1,0) circle (2.4pt);
  \node[blue!70!black, font=\scriptsize, anchor=north, yshift=-2.5pt] at (1,0) {$1$};
  \fill[blue!70!black] (2,0) circle (2.4pt);
  \node[blue!70!black, font=\scriptsize, anchor=north, yshift=-2.5pt] at (2,0) {$2$};
  \fill[blue!70!black] (3,0) circle (2.4pt);
  \node[blue!70!black, font=\scriptsize, anchor=north, yshift=-2.5pt] at (3,0) {$3$};
  \fill[blue!70!black] (4,0) circle (2.4pt);
  \node[blue!70!black, font=\scriptsize, anchor=north, yshift=-2.5pt] at (4,0) {$4$};
  \draw[red!65, dashed, thick] (-0.5,-2.5) -- (2.7,7.1);
  \fill[gray!55!black] (0,-1) circle (2.4pt);
  \node[gray!55!black, font=\scriptsize, anchor=west, xshift=4pt, yshift=-4pt] at (0,-1) {$0$};
  \fill[red!70!black] (1,2) circle (2.4pt);
  \node[red!70!black, font=\scriptsize, anchor=west, xshift=4pt, yshift=-4pt] at (1,2) {$1$};
  \fill[red!70!black] (2,5) circle (2.4pt);
  \node[red!70!black, font=\scriptsize, anchor=west, xshift=4pt, yshift=-4pt] at (2,5) {$2$};
  \fill[red!70!black] (1,3) circle (2.4pt);
  \node[red!70!black, font=\scriptsize, anchor=east, xshift=-3pt] at (1,3) {$1$};
  \fill[red!70!black] (2,6) circle (2.4pt);
  \node[red!70!black, font=\scriptsize, anchor=east, xshift=-3pt] at (2,6) {$2$};
  \node[red!65!black, font=\scriptsize, anchor=south west] at (2.6,7.1) {$u_2^1(m)$};
  \node[red!65!black, font=\scriptsize, anchor=south east, xshift=-2pt] at (2,6.15) {$u_2^0(m)$};

\end{tikzpicture}
\caption{The lattice points $u_i^j(m)$ for $i=1,2$ and $j=0,1$.}
\label{fig: u_i^j Notation}
\end{figure}

The aim of this section is to prove the following theorem.
\begin{theorem}
\label{thm:T1dim-surface}
In our setup the  dimensions of the homogeneous components $(T^1_{X})_u$  are given as follows.
      \begin{equation}
        \dim(T^1_{X})_u= \max \{0,d_1(u)+d_2(u)+\tilde{d}(u)\}\label{eq:dimT1-surfaces}
      \end{equation}
      with
      \[d_i(u)=
        \begin{cases}
          1 & u=-u_{i}^{0}(m),\, 1\leq m < \ell_i\\
          \ell_i-2 & u=-u_{i}^{1}(m),\, m \leq 0\\
          \ell_i-1 & u=-u_{i}^{1}(1)\\
          \ell_i-m + 1& u=-u_{i}^{1}(m),\, 2\leq m \leq \ell_i\\
          1 & u=-2u_{i}^{1}(m),\, m \leq 1,\, \ell_i=2\\
          0 & \text{otherwise}
        \end{cases}
      \]
      for $i=1,2$ and
      \[
        \tilde d(u)=
        \begin{cases}
          1 & u=-mw_i,\, 2\leq m < a_i,\, 1\leq i \leq s\\
          1 & u=-a_1w_1, s=1, a_1 > 1\\
          2 - \delta_{i1}-\delta_{is} & u=-w_i,\, 1\leq i \leq s\\
          0 & \text{otherwise.}
        \end{cases}
      \]
      For $\tilde X \not \cong \A^2$ we have the following exception
       \begin{enumerate}[a)]
       \item  If $s=1$, $u=-a_1w_1$ and $\ell_1=1,\ell_2 \neq 1$ or
         $\ell_1 \neq 1,\ell_2 = 1$, then
         $\dim (T^1_X)_u$ decreases by $1$ compared to (\ref{eq:dimT1-surfaces}).
         \label{item:singular-exception-multiple}
       \end{enumerate}
       For $\tilde X \cong \A^2$ the following additional rules apply
           \begin{enumerate}[a)]
             \setcounter{enumi}{1}
           \item If $\ell_1,\ell_2 \geq 2$ and $u=-u_{i}^{1}(\ell_i)$, then $\dim (T^1_X)_{u}=0$ holds for $i=1,2$.\label{item:smooth-exception-l2}
           \item If we have $\ell_1=1$ and $u=-u_{2}^{1}(m)$ with $1 \leq m \leq \ell_2-1$ or  $\ell_2=1$ and $u=-u_{1}^{1}(m)$ with $1 \leq m \leq \ell_1-1$, then $\dim (T_X^1)_{u}$ is decreased by $1$ compared to (\ref{eq:dimT1-surfaces}).    \label{item:smooth-exception-li=1}
            \item In degree $u=-2u_{1}^{1}(1)=-2u_{2}^{1}(1)$ we have \[\dim(T_X^1)_u=
                 \begin{cases}
                   1 & \ell_1,\ell_2\leq 2\text{ and } (\ell_1,\ell_2)\neq (1,1)\\
                   0 & \text{otherwise.}
                 \end{cases}\]\label{item:smooth-exception-l1=l2=2}
            \end{enumerate}            
\end{theorem}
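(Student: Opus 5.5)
We will compute $(T^1_X)_u$ via Lemma~\ref{lem:S2-T1}, which applies because surfaces are automatically $(S_2)$. As generating set of $S$ we take $E=\bar E'\cup E_1\cup E_2$. Here $\bar E'=\{w_1,\ldots,w_s\}$ (plus the auxiliary sum in the $\A^2$ case) consists of the interior Hilbert basis elements of $\bar S$, which lie in $S$ by seminormality, since interior points of $\sigma^\vee$ belong to $S$. The sets $E_i$ are chosen as in the proof of Lemma~\ref{lem:t1}: $E_1=\{\ell_1 g_1\}\cup\{w_1+(m-1)g_1 \mid 1\le m\le \ell_1\}$, i.e.\ the elements $u_1^0(\ell_1)$ and $u_1^1(m)$ for $1\le m\le \ell_1$, and analogously for $E_2$. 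We then organise the computation by the sign pattern of $u$ against $v_1,v_2$.

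\textbf{Case 1: $\langle u,v_1\rangle>0$ and $\langle u,v_2\rangle>0$.} Here $E^u_S=\emptyset$ and $(T^1_X)_u=0$.

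\textbf{Case 2: exactly one pairing is non-positive.} Proposition~\ref{prop:t1} applies and gives $\ell_i-2$ on the line $\langle u,v_i\rangle=-1$ and $1$ at $-2u_i^1(m)$ for $\ell_i=2$. This reproduces $d_i(u)$ for $u=-u_i^1(m)$ with $m\le 0$. The point is that these $u$ pair positively with the other ray precisely when $m\le 0$, after the sign flip $u=-u_i^1(m)$. In this region $\tilde d(u)=0$ and $d_{3-i}(u)=0$.

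\textbf{Case 3: both pairings non-positive.} These are the remaining degrees, all lying in $-\sigma^\vee$. For non-critical $u$ we use Proposition~\ref{prop:S2-formula}. The plan is to compare $E^u_S$ with $\bar E^u=\{w\in\bar E \mid w+u\notin\sigma^\vee\}$, the set used in Corollary~\ref{cor:normal-case}. Altmann's computation for $\tilde X$ gives $\tilde d(u)$, namely $\dim(T^1_{\tilde X})_u=\tilde d(u)$ (this is the classical cyclic quotient result). The extra elements of $E_i$ contribute extra relations, and we count how many survive modulo $\Rel_k(E^u_{S_1})+\Rel_k(E^u_{S_2})$. For critical $u$, namely those with $\langle u,v_i\rangle=-1$ and $\ell_i\ge3$, or in $G_i-2e$ with $\ell_i=2$, we instead use the full formula~(\ref{eq:S2-T1-formula}). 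There the subspaces $\langle q\in\Rel(E^u_{S_i}) \mid \bar q+u\notin S_i\rangle_k$ are determined explicitly in the proof of Lemma~\ref{lem:t1}. Those computations say these subspaces are spanned by the relations involving at most one element of $E_i$'s top row together with $\rho_i^\perp$ elements.

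Concretely, for $u=-u_i^0(m)$ with $1\le m<\ell_i$, the element $\ell_i g_i$ lies in $E^u_S$ while $\bar E^u$ does not see it, because $u+\ell_ig_i\in\sigma^\vee$ but $\notin S$. This gives exactly one new independent relation, the one expressing $\ell_ig_i$ through $g_i$-multiples of $w_1$-type elements, and hence $d_i=1$. For $u=-u_i^1(m)$ with $1\le m\le\ell_i$, the elements $u_i^1(m')$ with $m'<m$ drop out of $E^u_S$. The remaining $\ell_i-m+1$ elements of the top row, together with $\ell_ig_i$, give a relation space of that dimension modulo the $S_i$-contribution. The count is $\ell_i-1$ at $m=1$, because there the top-row element $u_i^1(1)=w_1$ coincides with a Hilbert basis element already counted in $\tilde d$. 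The total is $d_1+d_2+\tilde d$ because the two rays' contributions live on disjoint supports $E_1$, $E_2$ and interact only through the shared $\bar E'$. The $\max\{0,\cdot\}$ absorbs the cases where $\tilde d$ and $d_i$ double count a shared relation.

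\textbf{Main obstacle.} The hard part is the low-complexity exceptions a)--d), where $\bar E'$ is tiny. In the case $s=1$ and in the case $\tilde X\cong\A^2$, the generators $w_1$ (resp.\ the auxiliary sum) belong simultaneously to $E_1$ and $E_2$, so the supports are no longer independent. We plan to handle these by direct enumeration. We will write down $E^u_S$ explicitly for each listed $u$, compute $\dim\Rel_k(E^u_S)=\#E^u_S-2$, and exhibit the spanning relations with $\bar q+u\notin S$, in the spirit of the subcase $\ell>2$, $a=-2$ of Lemma~\ref{lem:t1}. For this we order coordinates and find, for each generator, a relation with that leading entry; the failures of this procedure identify the surviving dimensions. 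The cases $\ell_i\in\{1,2\}$ require separate bookkeeping, since the top row then degenerates, and we expect the degree $-2u_1^1(1)$ in case d) to need its own argument.
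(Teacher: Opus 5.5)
\textbf{Verdict: genuine gap.} Your Cases 1 and 2 are fine: Case 2 is Proposition~\ref{prop:t1} verbatim. The one slip is that for $\tilde X\cong\A^2$ the degree $-u_i^1(0)$ pairs to $0$, not positively, with the other ray, so it belongs to Case 3. The gap is Case 3, which carries the whole content of the theorem. There you give a narrative fitted to the formula rather than an argument, and three of its steps fail.

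(i) The comparison with Altmann's computation for $\tilde X$ is not set up. We have $\bar E\not\subset E$, since $g_i\notin E$ once $\ell_i\ge 2$. The membership conditions also differ ($w+u\notin\sigma^\vee$ versus $w+u\notin S$). So there is no map between the two relation spaces along which ``extra relations survive''.

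(ii) For $u=-u_i^1(m)$ with $m'<m\le\ell_i$, the elements $u_i^1(m')$ do \emph{not} drop out. Indeed $u+u_i^1(m')=(m'-m)g_i\notin\sigma^\vee$, so they lie in $E^u_S$, in fact in both $E^{u,1}$ and $E^{u,2}$. Only $u_i^1(m)$ itself leaves the top row. Your count ``$\ell_i-m+1$ remaining elements'' therefore rests on the wrong set.

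(iii) The additivity ``because the supports $E_1,E_2$ are disjoint'' is exactly what fails. Both local relation spaces sit in the same $\Rel_k(E^u_S)$, and their supports overlap in $E^{u,1}\cap E^{u,2}$, which is nonempty for most $u\in-\sigma^\vee$. Also, the $\max\{0,\cdot\}$ only compensates $d_i=-1$ when $\ell_i=1$; it does not absorb double counting. Finally, the exceptions a)--d) are postponed to an unspecified enumeration.

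\textbf{Missing idea.} What you need is an inclusion--exclusion count that makes this overlap the central quantity. Put $L_i=\langle q\in\Rel(E^{u,i})\mid \bar q+u\notin S_i\rangle_k$.
\begin{itemize}
\item Lemma~\ref{lem:S2-T1} gives $(T^1_X)_u^*\cong\Rel_k(E^u_S)/(L_1+L_2)$.
\item Proposition~\ref{prop:quotients-under-localisation} gives $\dim L_i=\dim\Rel_k(E^{u,i})-\dim(T^1_{U_i})_u$, with the local term supplied by Lemma~\ref{lem:t1}.
\item No two elements of $E$ are proportional, so $\dim\Rel_k(F)=\max\{0,\#F-2\}$ for every $F\subset E$.
\item The $(S_2)$ property gives $E^u_S=E^{u,1}\cup E^{u,2}$.
\end{itemize}
You must also show that $L_1\cap L_2\neq 0$ forces $(T^1_X)_u=0$. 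For non-critical $u$ this is immediate: there $L_i=\Rel_k(E^{u,i})$, so $L_1\cap L_2=\Rel_k(E^{u,1}\cap E^{u,2})$ is again a count. Critical $u$ need the short case analysis of Lemma~\ref{lem:non-trivial-intersection}. With this in hand you get
\[
\dim(T^1_X)_u=\max\Bigl\{0,\;2+\sum_{i=1,2}\bigl(\dim(T^1_{U_i})_u-c_{E^{u,i}}\bigr)-\#(E^{u,1}\cap E^{u,2})\Bigr\},
\]
and the theorem reduces to counting common elements.

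For example, at $u=-u_1^1(m)$ with $2\le m\le\ell_1$, generically the common elements are exactly $u_1^1(1),\dots,u_1^1(m-1)$. This gives $2+(\ell_1-2)+0-(m-1)=\ell_1-m+1$. The exceptions a)--c) are precisely the degrees where $s=1$ or $\tilde X\cong\A^2$ changes this count by one. For instance, when $\tilde X\cong\A^2$ and $\ell_2\neq 1$, the element $\ell_1g_1$ becomes common at $u=-u_1^1(\ell_1)$. Exception d) is a direct check at $u=-2w_1$.

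Without this device, or some equivalent control of the overlap of the two local relation spaces, Case 3 is not proved.
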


\begin{example}
  Consider the case, when $\sigma$ is spanned by elements of a lattice basis, i.e.  $\tilde X \cong \A^2$. We have $\tilde d \equiv 0$. For most values of
  $u$ at most one of the contributions $d_1$ and $d_2$ is non-trivial. The only exceptions are $u=-u_1^1(1)=-u_2^1(1)$
  with $\dim (T^1_X)_u=\ell_1+\ell_2-2$,   for $u=-u_1^0(1)=-u_2^1(0)$ with $\dim(T^1_X)_u=\ell_2-1$ if $\ell_1 \geq 2$ holds and for $u=-u_2^0(1)=-u_1^1(0)$ with $\dim(T^1_X)_u=\ell_1-1$ if $\ell_2 \geq 2$ holds, see Figure~\ref{fig:smooth-case}. For $\ell_1=1$ and $\ell_2 \geq 3$ Exception~\ref{item:smooth-exception-li=1} applies, see Figure~\ref{fig:smooth-exceptions} for a concrete example.
\begin{figure}[h]
  \centering
  \begin{subfigure}[t]{0.45\textwidth}
    \begin{tikzpicture}[scale=0.47, >=stealth]
      \begin{scope}
        \clip (-5.6,-5.6) rectangle (5.9,5.9);
        \fill[blue!7] (0,0) -- (60,0) -- (0,60) -- cycle;
        \fill[red!7]  (0,0) -- (-60,0) -- (0,-60) -- cycle;
        \draw[blue!55!black, thick] (0,0) -- (60,0);
        \draw[blue!55!black, thick] (0,0) -- (0,60);
        \draw[red!55!black, thick, opacity=.6]  (0,0) -- (-60,0);
        \draw[red!55!black, thick, opacity=.6]  (0,0) -- (0,-60);
      \end{scope}
      \draw[->, gray!70] (-5.7,0) -- (6.0,0);
      \draw[->, gray!70] (0,-5.7) -- (0,6.0);
      \fill[gray!40] (-5,-5) circle (1.2pt);
      \fill[gray!40] (-5,-4) circle (1.2pt);
      \fill[gray!40] (-5,-3) circle (1.2pt);
      \fill[gray!40] (-5,-2) circle (1.2pt);
      \fill[gray!40] (-5,-1) circle (1.2pt);
      \fill[gray!40] (-5,0) circle (1.2pt);
      \fill[gray!40] (-5,1) circle (1.2pt);
      \fill[gray!40] (-5,2) circle (1.2pt);
      \fill[gray!40] (-5,3) circle (1.2pt);
      \fill[gray!40] (-5,4) circle (1.2pt);
      \fill[gray!40] (-5,5) circle (1.2pt);
      \fill[gray!40] (-4,-5) circle (1.2pt);
      \fill[gray!40] (-4,-4) circle (1.2pt);
      \fill[gray!40] (-4,-3) circle (1.2pt);
      \fill[gray!40] (-4,-2) circle (1.2pt);
      \fill[gray!40] (-4,-1) circle (1.2pt);
      \fill[gray!40] (-4,0) circle (1.2pt);
      \fill[gray!40] (-4,1) circle (1.2pt);
      \fill[gray!40] (-4,2) circle (1.2pt);
      \fill[gray!40] (-4,3) circle (1.2pt);
      \fill[gray!40] (-4,4) circle (1.2pt);
      \fill[gray!40] (-4,5) circle (1.2pt);
      \fill[gray!40] (-3,-5) circle (1.2pt);
      \fill[gray!40] (-3,-4) circle (1.2pt);
      \fill[gray!40] (-3,-3) circle (1.2pt);
      \fill[gray!40] (-3,-2) circle (1.2pt);
      \fill[gray!40] (-3,-1) circle (1.2pt);
      \fill[gray!40] (-3,0) circle (1.2pt);
      \fill[gray!40] (-3,1) circle (1.2pt);
      \fill[gray!40] (-3,2) circle (1.2pt);
      \fill[gray!40] (-3,3) circle (1.2pt);
      \fill[gray!40] (-3,4) circle (1.2pt);
      \fill[gray!40] (-3,5) circle (1.2pt);
      \fill[gray!40] (-2,-5) circle (1.2pt);
      \fill[gray!40] (-2,-4) circle (1.2pt);
      \fill[gray!40] (-2,-3) circle (1.2pt);
      \fill[gray!40] (-2,-2) circle (1.2pt);
      \fill[gray!40] (-2,-1) circle (1.2pt);
      \fill[gray!40] (-2,0) circle (1.2pt);
      \fill[gray!40] (-2,1) circle (1.2pt);
      \fill[gray!40] (-2,2) circle (1.2pt);
      \fill[gray!40] (-2,3) circle (1.2pt);
      \fill[gray!40] (-2,4) circle (1.2pt);
      \fill[gray!40] (-2,5) circle (1.2pt);
      \fill[gray!40] (-1,-5) circle (1.2pt);
      \fill[gray!40] (-1,-4) circle (1.2pt);
      \fill[gray!40] (-1,-3) circle (1.2pt);
      \fill[gray!40] (-1,-2) circle (1.2pt);
      \fill[gray!40] (-1,-1) circle (1.2pt);
      \fill[gray!40] (-1,0) circle (1.2pt);
      \fill[gray!40] (-1,1) circle (1.2pt);
      \fill[gray!40] (-1,2) circle (1.2pt);
      \fill[gray!40] (-1,3) circle (1.2pt);
      \fill[gray!40] (-1,4) circle (1.2pt);
      \fill[gray!40] (-1,5) circle (1.2pt);
      \fill[gray!40] (0,-5) circle (1.2pt);
      \fill[gray!40] (0,-4) circle (1.2pt);
      \fill[gray!40] (0,-3) circle (1.2pt);
      \fill[gray!40] (0,-2) circle (1.2pt);
      \fill[gray!40] (0,-1) circle (1.2pt);
      \fill[gray!40] (0,1) circle (1.2pt);
      \fill[gray!40] (0,2) circle (1.2pt);
      \fill[gray!40] (0,3) circle (1.2pt);
      \fill[gray!40] (0,4) circle (1.2pt);
      \fill[blue!70] (0,5) circle (2.6pt);
      \fill[gray!40] (1,-5) circle (1.2pt);
      \fill[gray!40] (1,-4) circle (1.2pt);
      \fill[gray!40] (1,-3) circle (1.2pt);
      \fill[gray!40] (1,-2) circle (1.2pt);
      \fill[gray!40] (1,-1) circle (1.2pt);
      \fill[gray!40] (1,0) circle (1.2pt);
      \fill[blue!70] (1,1) circle (2.6pt);
      \fill[blue!70] (1,2) circle (2.6pt);
      \fill[blue!70] (1,3) circle (2.6pt);
      \fill[blue!70] (1,4) circle (2.6pt);
      \fill[blue!70] (1,5) circle (2.6pt);
      \fill[gray!40] (2,-5) circle (1.2pt);
      \fill[gray!40] (2,-4) circle (1.2pt);
      \fill[gray!40] (2,-3) circle (1.2pt);
      \fill[gray!40] (2,-2) circle (1.2pt);
      \fill[gray!40] (2,-1) circle (1.2pt);
      \fill[gray!40] (2,0) circle (1.2pt);
      \fill[blue!70] (2,1) circle (2.6pt);
      \fill[blue!70] (2,2) circle (2.6pt);
      \fill[blue!70] (2,3) circle (2.6pt);
      \fill[blue!70] (2,4) circle (2.6pt);
      \fill[blue!70] (2,5) circle (2.6pt);
      \fill[gray!40] (3,-5) circle (1.2pt);
      \fill[gray!40] (3,-4) circle (1.2pt);
      \fill[gray!40] (3,-3) circle (1.2pt);
      \fill[gray!40] (3,-2) circle (1.2pt);
      \fill[gray!40] (3,-1) circle (1.2pt);
      \fill[gray!40] (3,0) circle (1.2pt);
      \fill[blue!70] (3,1) circle (2.6pt);
      \fill[blue!70] (3,2) circle (2.6pt);
      \fill[blue!70] (3,3) circle (2.6pt);
      \fill[blue!70] (3,4) circle (2.6pt);
      \fill[blue!70] (3,5) circle (2.6pt);
      \fill[gray!40] (4,-5) circle (1.2pt);
      \fill[gray!40] (4,-4) circle (1.2pt);
      \fill[gray!40] (4,-3) circle (1.2pt);
      \fill[gray!40] (4,-2) circle (1.2pt);
      \fill[gray!40] (4,-1) circle (1.2pt);
      \fill[blue!70] (4,0) circle (2.6pt);
      \fill[blue!70] (4,1) circle (2.6pt);
      \fill[blue!70] (4,2) circle (2.6pt);
      \fill[blue!70] (4,3) circle (2.6pt);
      \fill[blue!70] (4,4) circle (2.6pt);
      \fill[blue!70] (4,5) circle (2.6pt);
      \fill[gray!40] (5,-5) circle (1.2pt);
      \fill[gray!40] (5,-4) circle (1.2pt);
      \fill[gray!40] (5,-3) circle (1.2pt);
      \fill[gray!40] (5,-2) circle (1.2pt);
      \fill[gray!40] (5,-1) circle (1.2pt);
      \fill[gray!40] (5,0) circle (1.2pt);
      \fill[blue!70] (5,1) circle (2.6pt);
      \fill[blue!70] (5,2) circle (2.6pt);
      \fill[blue!70] (5,3) circle (2.6pt);
      \fill[blue!70] (5,4) circle (2.6pt);
      \fill[blue!70] (5,5) circle (2.6pt);
      \draw[orange!85!black, line width=1.3pt, opacity=.30] (-5.5,-1) -- (5.6,-1);
      \draw[teal!85!black, line width=1.3pt, opacity=.30] (-1,-5.5) -- (-1,5.6);
      \draw[red!60!black, line width=1.3pt, opacity=.35] (-5.5,0) -- (0,0);
      \draw[red!60!black, line width=1.3pt, opacity=.35] (0,-5.5) -- (0,0);
      \node[fill=white, inner sep=1pt, font=\footnotesize] at (-3,-1) {$2$};
      \node[fill=white, inner sep=1pt, font=\footnotesize] at (-3,0) {$1$};
      \node[fill=white, inner sep=1pt, font=\footnotesize] at (-2,-1) {$3$};
      \node[fill=white, inner sep=1pt, font=\footnotesize] at (-2,0) {$1$};
      \node[fill=white, inner sep=1pt, font=\footnotesize] at (-1,-4) {$2$};
      \node[fill=white, inner sep=1pt, font=\footnotesize] at (-1,-3) {$3$};
      \node[fill=white, inner sep=1pt, font=\footnotesize] at (-1,-2) {$4$};
      \node[fill=yellow!45, draw=black, line width=.4pt, rounded corners=1pt, inner sep=1.6pt, font=\footnotesize\bfseries] at (-1,-1) {$7$};
      \node[fill=yellow!45, draw=black, line width=.4pt, rounded corners=1pt, inner sep=1.6pt, font=\footnotesize\bfseries] at (-1,0) {$4$};
      \node[fill=white, inner sep=1pt, font=\footnotesize] at (-1,1) {$3$};
      \node[fill=white, inner sep=1pt, font=\footnotesize] at (-1,2) {$3$};
      \node[fill=white, inner sep=1pt, font=\footnotesize] at (-1,3) {$3$};
      \node[fill=white, inner sep=1pt, font=\footnotesize] at (-1,4) {$3$};
      \node[fill=white, inner sep=1pt, font=\footnotesize] at (-1,5) {$3$};
      \node[fill=white, inner sep=1pt, font=\footnotesize] at (0,-4) {$1$};
      \node[fill=white, inner sep=1pt, font=\footnotesize] at (0,-3) {$1$};
      \node[fill=white, inner sep=1pt, font=\footnotesize] at (0,-2) {$1$};
      \node[fill=yellow!45, draw=black, line width=.4pt, rounded corners=1pt, inner sep=1.6pt, font=\footnotesize\bfseries] at (0,-1) {$3$};
      \node[fill=white, inner sep=1pt, font=\footnotesize] at (1,-1) {$2$};
      \node[fill=white, inner sep=1pt, font=\footnotesize] at (2,-1) {$2$};
      \node[fill=white, inner sep=1pt, font=\footnotesize] at (3,-1) {$2$};
      \node[fill=white, inner sep=1pt, font=\footnotesize] at (4,-1) {$2$};
      \node[fill=white, inner sep=1pt, font=\footnotesize] at (5,-1) {$2$};
      \node[font=\scriptsize] at (5.75,-1) {$\cdots$};
      \node[font=\scriptsize] at (-1,5.75) {$\vdots$};
    \end{tikzpicture}
    \caption{$\ell_1=4,\ell_2=5$}
  \end{subfigure}  \begin{subfigure}[t]{0.45\textwidth}
    \begin{tikzpicture}[scale=0.47, >=stealth]
      \begin{scope}
        \clip (-5.6,-5.6) rectangle (5.9,5.9);
        \fill[blue!7] (0,0) -- (60,0) -- (0,60) -- cycle;
        \fill[red!7]  (0,0) -- (-60,0) -- (0,-60) -- cycle;
        \draw[blue!55!black, thick] (0,0) -- (60,0);
        \draw[blue!55!black, thick] (0,0) -- (0,60);
        \draw[red!55!black, thick, opacity=.6]  (0,0) -- (-60,0);
        \draw[red!55!black, thick, opacity=.6]  (0,0) -- (0,-60);
      \end{scope}
      \draw[->, gray!70] (-5.7,0) -- (6.0,0);
      \draw[->, gray!70] (0,-5.7) -- (0,6.0);
      \fill[gray!40] (-5,-5) circle (1.2pt);
      \fill[gray!40] (-5,-4) circle (1.2pt);
      \fill[gray!40] (-5,-3) circle (1.2pt);
      \fill[gray!40] (-5,-2) circle (1.2pt);
      \fill[gray!40] (-5,-1) circle (1.2pt);
      \fill[gray!40] (-5,0) circle (1.2pt);
      \fill[gray!40] (-5,1) circle (1.2pt);
      \fill[gray!40] (-5,2) circle (1.2pt);
      \fill[gray!40] (-5,3) circle (1.2pt);
      \fill[gray!40] (-5,4) circle (1.2pt);
      \fill[gray!40] (-5,5) circle (1.2pt);
      \fill[gray!40] (-4,-5) circle (1.2pt);
      \fill[gray!40] (-4,-4) circle (1.2pt);
      \fill[gray!40] (-4,-3) circle (1.2pt);
      \fill[gray!40] (-4,-2) circle (1.2pt);
      \fill[gray!40] (-4,-1) circle (1.2pt);
      \fill[gray!40] (-4,0) circle (1.2pt);
      \fill[gray!40] (-4,1) circle (1.2pt);
      \fill[gray!40] (-4,2) circle (1.2pt);
      \fill[gray!40] (-4,3) circle (1.2pt);
      \fill[gray!40] (-4,4) circle (1.2pt);
      \fill[gray!40] (-4,5) circle (1.2pt);
      \fill[gray!40] (-3,-5) circle (1.2pt);
      \fill[gray!40] (-3,-4) circle (1.2pt);
      \fill[gray!40] (-3,-3) circle (1.2pt);
      \fill[gray!40] (-3,-2) circle (1.2pt);
      \fill[gray!40] (-3,-1) circle (1.2pt);
      \fill[gray!40] (-3,0) circle (1.2pt);
      \fill[gray!40] (-3,1) circle (1.2pt);
      \fill[gray!40] (-3,2) circle (1.2pt);
      \fill[gray!40] (-3,3) circle (1.2pt);
      \fill[gray!40] (-3,4) circle (1.2pt);
      \fill[gray!40] (-3,5) circle (1.2pt);
      \fill[gray!40] (-2,-5) circle (1.2pt);
      \fill[gray!40] (-2,-4) circle (1.2pt);
      \fill[gray!40] (-2,-3) circle (1.2pt);
      \fill[gray!40] (-2,-2) circle (1.2pt);
      \fill[gray!40] (-2,-1) circle (1.2pt);
      \fill[gray!40] (-2,0) circle (1.2pt);
      \fill[gray!40] (-2,1) circle (1.2pt);
      \fill[gray!40] (-2,2) circle (1.2pt);
      \fill[gray!40] (-2,3) circle (1.2pt);
      \fill[gray!40] (-2,4) circle (1.2pt);
      \fill[gray!40] (-2,5) circle (1.2pt);
      \fill[gray!40] (-1,-5) circle (1.2pt);
      \fill[gray!40] (-1,-4) circle (1.2pt);
      \fill[gray!40] (-1,-3) circle (1.2pt);
      \fill[gray!40] (-1,-2) circle (1.2pt);
      \fill[gray!40] (-1,-1) circle (1.2pt);
      \fill[gray!40] (-1,0) circle (1.2pt);
      \fill[gray!40] (-1,1) circle (1.2pt);
      \fill[gray!40] (-1,2) circle (1.2pt);
      \fill[gray!40] (-1,3) circle (1.2pt);
      \fill[gray!40] (-1,4) circle (1.2pt);
      \fill[gray!40] (-1,5) circle (1.2pt);
      \fill[gray!40] (0,-5) circle (1.2pt);
      \fill[gray!40] (0,-4) circle (1.2pt);
      \fill[gray!40] (0,-3) circle (1.2pt);
      \fill[gray!40] (0,-2) circle (1.2pt);
      \fill[gray!40] (0,-1) circle (1.2pt);
      \fill[gray!40] (0,1) circle (1.2pt);
      \fill[gray!40] (0,2) circle (1.2pt);
      \fill[gray!40] (0,3) circle (1.2pt);
      \fill[gray!40] (0,4) circle (1.2pt);
      \fill[blue!70] (0,5) circle (2.6pt);
      \fill[gray!40] (1,-5) circle (1.2pt);
      \fill[gray!40] (1,-4) circle (1.2pt);
      \fill[gray!40] (1,-3) circle (1.2pt);
      \fill[gray!40] (1,-2) circle (1.2pt);
      \fill[gray!40] (1,-1) circle (1.2pt);
      \fill[blue!70] (1,0) circle (2.6pt);
      \fill[blue!70] (1,1) circle (2.6pt);
      \fill[blue!70] (1,2) circle (2.6pt);
      \fill[blue!70] (1,3) circle (2.6pt);
      \fill[blue!70] (1,4) circle (2.6pt);
      \fill[blue!70] (1,5) circle (2.6pt);
      \fill[gray!40] (2,-5) circle (1.2pt);
      \fill[gray!40] (2,-4) circle (1.2pt);
      \fill[gray!40] (2,-3) circle (1.2pt);
      \fill[gray!40] (2,-2) circle (1.2pt);
      \fill[gray!40] (2,-1) circle (1.2pt);
      \fill[blue!70] (2,0) circle (2.6pt);
      \fill[blue!70] (2,1) circle (2.6pt);
      \fill[blue!70] (2,2) circle (2.6pt);
      \fill[blue!70] (2,3) circle (2.6pt);
      \fill[blue!70] (2,4) circle (2.6pt);
      \fill[blue!70] (2,5) circle (2.6pt);
      \fill[gray!40] (3,-5) circle (1.2pt);
      \fill[gray!40] (3,-4) circle (1.2pt);
      \fill[gray!40] (3,-3) circle (1.2pt);
      \fill[gray!40] (3,-2) circle (1.2pt);
      \fill[gray!40] (3,-1) circle (1.2pt);
      \fill[blue!70] (3,0) circle (2.6pt);
      \fill[blue!70] (3,1) circle (2.6pt);
      \fill[blue!70] (3,2) circle (2.6pt);
      \fill[blue!70] (3,3) circle (2.6pt);
      \fill[blue!70] (3,4) circle (2.6pt);
      \fill[blue!70] (3,5) circle (2.6pt);
      \fill[gray!40] (4,-5) circle (1.2pt);
      \fill[gray!40] (4,-4) circle (1.2pt);
      \fill[gray!40] (4,-3) circle (1.2pt);
      \fill[gray!40] (4,-2) circle (1.2pt);
      \fill[gray!40] (4,-1) circle (1.2pt);
      \fill[blue!70] (4,0) circle (2.6pt);
      \fill[blue!70] (4,1) circle (2.6pt);
      \fill[blue!70] (4,2) circle (2.6pt);
      \fill[blue!70] (4,3) circle (2.6pt);
      \fill[blue!70] (4,4) circle (2.6pt);
      \fill[blue!70] (4,5) circle (2.6pt);
      \fill[gray!40] (5,-5) circle (1.2pt);
      \fill[gray!40] (5,-4) circle (1.2pt);
      \fill[gray!40] (5,-3) circle (1.2pt);
      \fill[gray!40] (5,-2) circle (1.2pt);
      \fill[gray!40] (5,-1) circle (1.2pt);
      \fill[blue!70] (5,0) circle (2.6pt);
      \fill[blue!70] (5,1) circle (2.6pt);
      \fill[blue!70] (5,2) circle (2.6pt);
      \fill[blue!70] (5,3) circle (2.6pt);
      \fill[blue!70] (5,4) circle (2.6pt);
      \fill[blue!70] (5,5) circle (2.6pt);
      \draw[orange!85!black, line width=1.3pt, opacity=.30] (-5.5,-1) -- (5.6,-1);
      \draw[teal!85!black, line width=1.3pt, opacity=.30] (-1,-5.5) -- (-1,5.6);
      \draw[red!60!black, line width=1.3pt, opacity=.35] (-5.5,0) -- (0,0);
      \draw[red!60!black, line width=1.3pt, opacity=.35] (0,-5.5) -- (0,0);
      \node[fill=white, inner sep=1pt, font=\footnotesize] at (-1,-5) {$1$};
      \node[fill=white, inner sep=1pt, font=\footnotesize] at (-1,-4) {$1$};
      \node[fill=white, inner sep=1pt, font=\footnotesize] at (-1,-3) {$2$};
      \node[fill=white, inner sep=1pt, font=\footnotesize] at (-1,-2) {$3$};
      \node[fill=yellow!45, draw=black, line width=.4pt, rounded corners=1pt, inner sep=1.6pt, font=\footnotesize\bfseries] at (-1,-1) {$3$};
      \node[fill=white, inner sep=1pt, font=\footnotesize] at (-1,0) {$3$};
      \node[fill=white, inner sep=1pt, font=\footnotesize] at (-1,1) {$3$};
      \node[fill=white, inner sep=1pt, font=\footnotesize] at (-1,2) {$3$};
      \node[fill=white, inner sep=1pt, font=\footnotesize] at (-1,3) {$3$};
      \node[fill=white, inner sep=1pt, font=\footnotesize] at (-1,4) {$3$};
      \node[fill=white, inner sep=1pt, font=\footnotesize] at (-1,5) {$3$};
      \node[fill=white, inner sep=1pt, font=\footnotesize] at (0,-4) {$1$};
      \node[fill=white, inner sep=1pt, font=\footnotesize] at (0,-3) {$1$};
      \node[fill=white, inner sep=1pt, font=\footnotesize] at (0,-2) {$1$};
      \node[fill=yellow!45, draw=black, line width=.4pt, rounded corners=1pt, inner sep=1.6pt, font=\footnotesize\bfseries] at (0,-1) {$0$};
      \node[font=\scriptsize] at (-1,5.75) {$\vdots$};
    \end{tikzpicture}
    \caption{$\ell_1=1,\ell_2=5$}
    \label{fig:smooth-exceptions}
  \end{subfigure}
 \caption{Dimensions of $(T^1_X)_u$ for the case $\tilde X=\A^2$.}
\label{fig:smooth-case}
\end{figure}
\end{example}

The following direct consequence for normal affine toric surfaces is already known due to \cite{zbMATH00681806}.
\begin{corollary}
\label{cor:surface-t1-normal}
 For the normalisation $\tilde X$ we have  \( \dim (T_{\tilde X}^1)_u =\tilde d(u). \)
\end{corollary}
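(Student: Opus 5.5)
The plan is to specialise Theorem~\ref{thm:T1dim-surface} to the normal monoid $\bar S$ itself. The normalisation $\tilde X=\spec k[\bar S]$ is a seminormal affine toric surface, and its associated sublattices are $G_{\rho_i}=\rho_i^\perp\cap M$. Hence its Galois groups are trivial and $\ell_1=\ell_2=1$. The cone $\sigma$, the Hilbert basis $w_0,\ldots,w_{s+1}$, the numbers $a_i$ and the points $u_i^j(m)$ are the same for $X$ and $\tilde X$. So it suffices to evaluate formula (\ref{eq:dimT1-surfaces}) and the exceptions at $\ell_1=\ell_2=1$. As a consistency check, for a normal monoid Corollary~\ref{cor:normal-case} (Altmann's formula) applies directly, and Theorem~\ref{thm:T1dim-surface} in this case is just its evaluation.

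\textbf{Step 1: the functions $d_i$.} With $\ell_i=1$ the cases of $d_i$ reduce as follows.
\begin{itemize}
\item The ranges $1\le m<\ell_i$ and $2\le m\le\ell_i$ are empty.
\item The case with $\ell_i=2$ does not occur.
\item At $u=-u_i^1(1)$ the value is $\ell_i-1=0$.
\item The only surviving contribution is $d_i(u)=\ell_i-2=-1$ at $u=-u_i^1(m)$ with $m\le 0$.
\end{itemize}
Thus $d_i\le 0$ everywhere, and $d_i=0$ away from these points.

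\textbf{Step 2: the points where $d_i=-1$.} I will show that $\tilde d$ vanishes at every point $u=-u_i^1(m)$ with $m\le 0$. Take $i=1$; the case $i=2$ is symmetric. Here $u=-w_1+(1-m)w_0$, so $\langle u,v_1\rangle=-1$ and $\langle u,v_2\rangle=-\langle w_1,v_2\rangle+(1-m)\langle w_0,v_2\rangle$.

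Suppose $\tilde d(u)\neq 0$. Then $u=-m'w_j$ with $1\le j\le s$ and $m'\geq 1$. Pairing with $v_1$ forces $m'=1$ and $\langle w_j,v_1\rangle=1$. Comparing the $v_2$-pairings then gives $w_j=w_1-(1-m)w_0$, with $1-m\ge 1$. Since the Hilbert basis elements are ordered by slope, this is impossible. Consequently $d_1(u)+d_2(u)+\tilde d(u)\le -1+0+0<0$ at these points, and the maximum in (\ref{eq:dimT1-surfaces}) returns $0=\tilde d(u)$. At all other $u$ we have $d_1=d_2=0$, so the formula returns $\max\{0,\tilde d(u)\}=\tilde d(u)$.

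\textbf{Step 3: the exceptions.}
\begin{itemize}
\item Exception~\ref{item:singular-exception-multiple} requires $\ell_1\neq\ell_2$, so it never applies.
\item Exception~\ref{item:smooth-exception-l2} requires $\ell_i\geq 2$, so it never applies.
\item Exception~\ref{item:smooth-exception-li=1} concerns the range $1\le m\le \ell_j-1=0$, which is empty.
\item Exception~\ref{item:smooth-exception-l1=l2=2} applies only when $\tilde X\cong\A^2$. For $(\ell_1,\ell_2)=(1,1)$ it gives $\dim(T^1_{\tilde X})_{-2w_1}=0$.
\end{itemize}

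The main obstacle is the smooth case $\tilde X\cong\A^2$, where the artificial generator $w_1=w_0+w_2$ (so $s=1$, $a_1=2$) makes the second line of $\tilde d$ formally nonzero at $u=-2w_1$. I would handle it by using exception~\ref{item:smooth-exception-l1=l2=2}, which gives $\dim(T^1_{\tilde X})_{-2w_1}=0$. I would then point out that $\tilde d$ must be read as identically $0$ in this case, as in the Example preceding the statement. This agrees with the rigidity of $\A^2$, i.e.\ $T^1_{\A^2}=0$. The other values are consistent: $\tilde d(-w_1)=2-1-1=0$, and the range $2\le m<a_1$ is empty.
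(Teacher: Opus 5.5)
You follow the paper's route: the paper states the corollary as a direct consequence of Theorem~\ref{thm:T1dim-surface} with $\ell_1=\ell_2=1$, and your Steps 1--3 carry this out correctly. Step 2 can be shortened: by the numbering convention, $u_1^1(m)$ with $m\le 0$ lies outside $\relint\sigma^\vee$, while every $m'w_j$ with $1\le j\le s$ lies inside it.

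Your final paragraph is wrong, and it should be replaced. For $\tilde X\cong\A^2$ the added generator is $w_1=w_0+w_2$. The defining relation $w_0+w_2=a_1w_1$ therefore gives $a_1=1$, not $a_1=2$; the paper uses this case explicitly in the proof of Lemma~\ref{lem:non-trivial-intersection} (``if $a_1=1$ then also $w_2$''). Consequently:
\begin{itemize}
\item the second case of $\tilde d$, which requires $a_1>1$, never applies;
\item the range $2\le m<a_1$ is empty;
\item $\tilde d(-w_1)=2-1-1=0$.
\end{itemize}
So $\tilde d\equiv 0$ holds by definition, exactly as the Example records, and Rule~d) with $(\ell_1,\ell_2)=(1,1)$ returns $0$ in agreement. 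There is no obstacle.

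Your proposed remedy is also not a valid proof step. You declare that $\tilde d$ ``must be read as'' identically $0$. But if $a_1$ really were $2$, then $\tilde d(-2w_1)=1$ while $\dim(T^1_{\A^2})_{-2w_1}=0$, and the corollary as stated would simply be false. You cannot repair that by reinterpreting the definition. Once you compute $a_1=1$ correctly, the argument is complete.
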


To prove Theorem~\ref{thm:T1dim-surface} we first determine a generating set for $S$.   The proof of the following Lemma is straightforward and left to the reader.
\begin{lemma}
\label{lem:generating-set}
With the choice of $\bar E=\{w_0,\ldots,w_{s+1}\}$ as above, the set
\begin{align*}
    E= &\{\ell_1 w_0,\ell_2 w_{s+1}\} \cup \{w_2,\ldots,w_{s-1}\} \cup \\
       & \cup\{w_1+i w_0, w_s+j w_{s+1} \mid 0 \leq i \leq \ell_1-1, 0 \leq j \leq \ell_2-1 \}\\
  =&\{u_{1}^{0}(\ell_1),u_{2}^{0}(\ell_2)\} \cup \{w_2,\ldots,w_{s-1}\} \cup \\
        & \cup\{u_{1}^{1}(i),u_{2}^{1}(j) \mid 0 \leq i \leq \ell_1-1, 0 \leq j \leq \ell_2-1 \}
 \end{align*}
  is a generating set for $S$.
\end{lemma}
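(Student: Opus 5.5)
The plan is to describe $S$ explicitly via the seminormality condition (\ref{eq:seminormality}), and then to decompose an arbitrary element of $S$ using the unimodular subdivision of $\sigma^\vee$ given by the Hilbert basis $\bar E$.

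First I determine $S$. The faces of $\sigma$ are $\{0\}$, $\rho_1$, $\rho_2$ and $\sigma$, and the corresponding sublattices are as follows.
\begin{itemize}
\item $G_\sigma=0$.
\item $G_{\{0\}}=M$, since $S$ spans $M$.
\item $G_{\rho_i}$ is the index-$\ell_i$ sublattice of $\rho_i^\perp\cap M=\ZZ g_i$, that is, $G_{\rho_i}=\ell_i\ZZ g_i$.
\end{itemize}
Hence (\ref{eq:seminormality}) gives
\[
S=\{0\}\cup \ell_1\ZZ_{>0}\, w_0\cup \ell_2\ZZ_{>0}\, w_{s+1}\cup (\relint\sigma^\vee\cap M).
\]
From this, $E\subset S$ is immediate. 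The elements $\ell_1w_0$ and $\ell_2w_{s+1}$ lie on the boundary rays with the right multiplicity. Every element $w_1+iw_0$, $w_s+jw_{s+1}$ and $w_2,\ldots,w_{s-1}$ lies in the interior, because $w_1,\ldots,w_s$ are interior. In the case $\tilde X\cong\A^2$ we have $s=1$ and $w_1=w_0+w_2$ is interior as well.

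For the reverse inclusion, boundary elements of $S$ are multiples of $\ell_1w_0$ or $\ell_2w_{s+1}$, so they are clearly generated by $E$. For an interior lattice point $u$, I use that the cones $\operatorname{cone}(w_j,w_{j+1})$, $0\le j\le s$, form a unimodular subdivision of $\sigma^\vee$. This is the standard continued-fraction description of the Hilbert basis, and it also holds in the $\A^2$ case with the extra generator added. So $u=\alpha w_j+\beta w_{j+1}$ with $\alpha,\beta\in\ZZ_{\ge0}$ for some $j$. I then treat three cases.
\begin{itemize}
\item If $1\le j\le s-1$, both $w_j$ and $w_{j+1}$ lie in $E$, because $w_1=u_1^1(0)$ and $w_s=u_2^1(0)$ belong to $E$. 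Hence $u$ is generated by $E$.
\item If $j=0$, interiority forces $\beta\ge1$. Write $\alpha=q\ell_1+r$ with $0\le r<\ell_1$. Then
\[
u=q\cdot(\ell_1w_0)+(w_1+rw_0)+(\beta-1)w_1,
\]
and every summand is a nonnegative multiple of an element of $E$.
\item If $j=s$, the argument is symmetric, using $\ell_2w_{s+1}$ and $w_s+rw_{s+1}$.
\end{itemize}

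The only step requiring care is justifying the unimodular subdivision and interiority, in particular in the degenerate cases $s=1$ and $\tilde X\cong\A^2$. I would handle these by noting directly that consecutive Hilbert basis elements form a lattice basis. No real obstacle is expected.
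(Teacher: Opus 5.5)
Your proof is correct. The paper leaves this lemma to the reader, and your argument is exactly the routine verification intended: you read off $S=\{0\}\cup\ell_1\ZZ_{>0}w_0\cup\ell_2\ZZ_{>0}w_{s+1}\cup(\relint\sigma^\vee\cap M)$ from the seminormality condition, then decompose interior points along the unimodular cones $\operatorname{cone}(w_j,w_{j+1})$, reducing the $w_0$-coefficient modulo $\ell_1$ (respectively the $w_{s+1}$-coefficient modulo $\ell_2$) in the two boundary cones.

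One cosmetic slip: with the paper's convention $u_1^1(m)=w_1+(m-1)w_0$, you have $w_1=u_1^1(1)$ and $w_s=u_2^1(1)$, not $u_i^1(0)$. The second line of the statement has the same off-by-one; the ranges there should be $1\le i\le \ell_1$ and $1\le j\le \ell_2$. This does not affect your argument, which only needs $w_1,w_s\in E$, and that holds via $i=j=0$ in the first description.
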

The generating set from  Lemma~\ref{lem:generating-set} will be denoted by  $E$  for the rest of this section. As in the previous sections we consider the subsets
\(E^u_S=\{w \in E \mid u+w \notin S\}\) and 
\(E^{u,i}:=E^{u}_{S_i}=\{w \in E \mid u+w \notin S_i\}.\)

\begin{lemma}
  \label{lem:surface-T1-from-intersection}
 The dimension of $(T^1_X)_u$ is given by
 \begin{equation}
   \dim((T^1_X)_u)=\max \left\{0, \; 2 + \sum_{i=1,2} (\dim(T^1_{U_i})_u- c_{E^{u,i}}) -\# \bigcap_{i=1,2}E^{u,i} \right\}.\label{eq:dimT1-intersection}       
 \end{equation}
  Here $c_{E^{u,i}}$ is a correction term defined as $c_{E^{u,i}}=\max\{0, 2-\# E^{u,i}\}$.
\end{lemma}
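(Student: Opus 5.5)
The plan is to start from Lemma~\ref{lem:S2-T1}, which applies because a seminormal surface automatically satisfies $(S_2)$. It gives
\[
(T^1_X)_u \cong \Bigl(\Rel_k(E^u_S)\Big/\bigl(V_1+V_2\bigr)\Bigr)^*,\qquad V_i:=\langle q\in \Rel(E^{u,i}) \mid \bar q+u\notin S_i\rangle_k .
\]
By the $(S_2)$ property $S=S_1\cap S_2$, so $E^u_S=E^{u,1}\cup E^{u,2}$. Since $M\otimes k$ has dimension $2$, I expect $\dim\Rel_k(F)=\#F-2$ whenever $F$ spans $M\otimes k$.

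The dimension count then proceeds in three steps.

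\textbf{Step 1.} First compute $\dim\Rel_k(E^u_S)=\#E^{u,1}+\#E^{u,2}-\#(E^{u,1}\cap E^{u,2})-2$, assuming $E^u_S$ spans $M_k$. If it does not, the formula should degenerate to give $0$, which is absorbed by the $\max\{0,\cdot\}$.

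\textbf{Step 2.} By Proposition~\ref{prop:quotients-under-localisation}, $\dim V_i=\dim\Rel_k(E^{u,i})-\dim(T^1_{U_i})_u$. Here $\dim\Rel_k(E^{u,i})=\#E^{u,i}-2+c_{E^{u,i}}$, since for $\#E^{u,i}\le 2$ the set is linearly independent (or empty) and the relation space vanishes. One must check that the two elements of $E^{u,i}$ are never dependent in that case; because they lie in $E$, this should follow directly from the shape of the generating set in Lemma~\ref{lem:generating-set}.

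\textbf{Step 3.} Show that $V_1\cap V_2=0$, so that $\dim(V_1+V_2)=\dim V_1+\dim V_2$. Substituting the three counts,
\[
\dim (T^1_X)_u=\#E^{u,1}+\#E^{u,2}-\#\textstyle\bigcap-2-\sum_i\bigl(\#E^{u,i}-2+c_{E^{u,i}}-\dim(T^1_{U_i})_u\bigr),
\]
which is exactly (\ref{eq:dimT1-intersection}).

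Step~3 is the main obstacle. A relation in $V_1\cap V_2$ is supported on $E^{u,1}\cap E^{u,2}$. I would show that this intersection is small, at most two elements and linearly independent, so that $\Rel_k$ of it vanishes. The argument uses the geometry of the two localisations. $S_1$ consists of all lattice points with $\langle\cdot,v_1\rangle>0$ together with $G_1$ on the line $\langle\cdot,v_1\rangle=0$, and similarly for $S_2$. Hence $w\in E^{u,1}\cap E^{u,2}$ forces $\langle w+u,v_i\rangle\le 0$ for both $i$. Since all elements of $E$ lie in $\sigma^\vee$ and are essentially vertices of the boundary of $\operatorname{conv}(\bar S\setminus 0)$, only very few of them can satisfy both inequalities. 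I would check this against the ordering $w_0,\dots,w_{s+1}$ and the convexity relations $w_{i-1}+w_{i+1}=a_iw_i$.

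If this fails, or gives more than two elements, the fallback is to argue directly that each $V_i$ is contained in $\Rel_k(E^{u,i})$. Then $V_1\cap V_2\subset\Rel_k(E^{u,1}\cap E^{u,2})$, and one shows that an element of this space lying in both $V_i$ must vanish, or else one accounts for it via the $\max$. Should the count become negative, this signals that $E^u_S$ fails to span or that the degenerate case occurs; in that case the true dimension is $0$, which the $\max$ accommodates.
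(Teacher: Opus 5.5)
\textbf{There is a genuine gap, in Step~3.} Your bookkeeping is the same as the paper's: $\dim(T^1_X)_u=\dim\Rel_k(E^u_S)-\dim(V_1+V_2)$, $E^u_S=E^{u,1}\cup E^{u,2}$, inclusion--exclusion, and $\dim V_i=\dim\Rel_k(E^{u,i})-\dim(T^1_{U_i})_u$ from Proposition~\ref{prop:quotients-under-localisation}. For $\#E^u_S\ge 2$ this gives
\[
\dim(T^1_X)_u=A+\dim(V_1\cap V_2),\qquad A:=2+\sum_{i=1,2}\bigl(\dim(T^1_{U_i})_u-c_{E^{u,i}}\bigr)-\#(E^{u,1}\cap E^{u,2}).
\]
The problem is that $V_1\cap V_2=0$ is false in general. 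So is your expectation that $E^{u,1}\cap E^{u,2}$ has at most two elements. Take $u$ so deep in $-\relint\sigma^\vee$ that $\langle u+w,v_i\rangle<0$ for all $w\in E$ and $i=1,2$. Then $E^{u,1}=E^{u,2}=E$, and Lemma~\ref{lem:t1} gives $(T^1_{U_i})_u=0$. Hence $V_1=V_2=\Rel_k(E)\neq 0$, and your unmaximised expression equals $2-\#E<0$. Lemma~\ref{lem:intersection} likewise shows that the intersection is typically large. So the $\max\{0,\cdot\}$ is not mainly there to catch a degenerate $E^u_S$. Apart from the trivial case $\#E^u_S\le 1$, its job is exactly to absorb $\dim(V_1\cap V_2)$.

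What has to be proved, and what your fallback only gestures at, is this: if $V_1\cap V_2\neq 0$, then $(T^1_X)_u=0$. In that case $A=-\dim(V_1\cap V_2)<0$. If instead $V_1\cap V_2=0$, then $A=\dim(T^1_X)_u\ge 0$. Either way the formula holds.

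The paper proves this in two steps.
\begin{itemize}
\item If both $(T^1_{U_i})_u$ vanish, then $V_i=\Rel_k(E^{u,i})$. So $V_1\cap V_2=\Rel_k(E^{u,1}\cap E^{u,2})$, which has dimension $\#(E^{u,1}\cap E^{u,2})-2$ when nonzero. Then $\#E^{u,i}\ge 3$, so $c_{E^{u,i}}=0$, and substituting gives $\dim(T^1_X)_u=0$.
\item If some $(T^1_{U_i})_u\neq 0$, Lemma~\ref{lem:t1} leaves only $\langle v_i,u\rangle=-1$, or $\langle v_i,u\rangle=-2$ with $\ell_i=2$. A separate case analysis (Lemma~\ref{lem:non-trivial-intersection}) then shows one of two things. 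Either $\dim V_i\le 1$ and its elements involve a generator outside $E^{u,j}$, so the intersection is trivial. Or the other local $T^1$ vanishes and $E^{u,j}=E^u_S$, so $V_j=\Rel_k(E^u_S)$ and $(T^1_X)_u=0$ directly.
\end{itemize}
Without this argument your derivation does not establish the formula.
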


\begin{proof}
  Let us abbreviate the subspace \(\langle q \in \Rel(E^{u,i}) \mid \bar q+u \notin S_i\rangle_k \subset \Rel_k(E^u_S)\) by $L_i$ for the moment. With this notation we have
  \begin{equation}
    \dim (L_1+L_2) =   \dim L_1+ \dim L_2- \dim (L_1 \cap L_2).\label{eq:surface-lemma-dim-formula}
  \end{equation}
  On the other hand, Proposition~\ref{prop:quotients-under-localisation} implies the identity
  \begin{equation}
    \dim L_i = \dim \Rel_k(E^{u,i}) - \dim (T_{U_i}^1)_u = \# E^{u,i}-2+c_{E^{u,i}} - \dim (T_{U_i}^1)_u\label{eq:surface-lemma-Li}.
  \end{equation}
  Plugging this into (\ref{eq:surface-lemma-dim-formula}) gives
   \begin{equation}
    \dim (L_1+L_2) =   \# E^{u,1}+ \# E^{u,2}-4 + c_{E^{u,1}}+ c_{E^{u,2}}-\dim (L_1 \cap L_2).\label{eq:surface-lemma-dimsum}
  \end{equation}  
  On the other hand, by Lemma~\ref{lem:S2-T1} we either have
  \begin{equation}
    \dim (T_{X}^1)_u= \dim \Rel_k(E^u_S)-  \dim(L_1+L_2)=\# E^u_S-2 - \dim(L_1+L_2) \label{eq:surface-lemma-T1}
  \end{equation}
  or $\dim (T_{X}^1)_u=0$ if $\# E^u_S< 2$ and therefore $\dim \Rel_k(E^u_S)=0$ holds.
  Notice, that we have $E^u_S=E^{u,1} \cup E^{u,2}$, leading to the identity
  \begin{equation}
    \# E^u_S-\# E^{u,1}- \# E^{u,2}=-\#(E^{u,1}\cap E^{u,2}).\label{eq:surface-lemma-inclusion-exclusion}
  \end{equation}
Combining (\ref{eq:surface-lemma-dimsum}), \ref{eq:surface-lemma-T1}) and (\ref{eq:surface-lemma-inclusion-exclusion})
gives 
\begin{equation}
  \label{eq:surface-lemma-T1-formula}
  2-\#\bigcap_{i=1,2}(E^{u,i}) + \sum_{i=1,2}\dim (T_{U_i}^1)_u  -\sum_{i=1,2}c_{E^{u,i}} + \dim(L_1\cap L_2).
\end{equation}
for $\dim (T_{X}^1)_u$. 
Now, the claim  would follow if we could show that $\dim (T_{X}^1)_u=0$ holds whenever we have $\dim(L_1\cap L_2) > 0$. To see this, assume first that $(T_{U_1}^1)_u$ and $(T_{U_2}^1)_u$ vanish. Then we have
\[
L_1\cap L_2 = \Rel_k(E^{u,1})\cap \Rel_k(E^{u,2}) = \Rel_k(E^{u,1} \cap E^{u,2}).
\]
The first identity follows from Lemma~\ref{prop:quotients-under-localisation} and the second one directly from the definition. Now, under the assumption $\dim(L_1\cap L_2) > 0$ we have $\dim (L_1 \cap L_2)=\# (E^{u,1} \cap E^{u,2})-2$. Plugging this into (\ref{eq:surface-lemma-T1-formula}) gives indeed $\dim (T_{X}^1)_u=0$.
For the remaining cases see Lemma~\ref{lem:non-trivial-intersection} below.
\end{proof}

\begin{lemma}
  \label{lem:non-trivial-intersection}
  If $(T_{U_1}^1)_u \neq 0$ or $(T_{U_2}^1)_u \neq 0$ and the intersection \[\langle q \in \Rel(E^{u,1}) \mid \bar q+u \notin S_1\rangle_k \;\cap\; \langle q \in \Rel(E^{u,2}) \mid \bar q+u \notin S_2\rangle_k\]
  is non-trivial, then $\dim (T_{X}^1)_u$ vanishes.
\end{lemma}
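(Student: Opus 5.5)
The plan is to turn the lemma into a case analysis over the critical weights, which Lemma~\ref{lem:t1} and Proposition~\ref{prop:t1} describe completely. By symmetry assume $(T^1_{U_1})_u\neq 0$, and write $L_i=\langle q\in\Rel(E^{u,i})\mid \bar q+u\notin S_i\rangle_k$ as in the proof of Lemma~\ref{lem:surface-T1-from-intersection}. Then either $\ell_1>2$ and $\langle u,v_1\rangle=-1$, or $\ell_1=2$, $\langle u,v_1\rangle=-2$ and $u\in G_1-2e$. Since
\[
S_1=\{w \mid \langle w,v_1\rangle>0\}\cup G_1,
\]
the set $E^{u,1}$ can be read off directly from the generating set $E$ of Lemma~\ref{lem:generating-set}. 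In the first case it consists of $\ell_1w_0$ and all elements $u_1^1(i)$, $0\le i\le \ell_1-1$, except the unique one with $u+w\in G_1$; when $s=1$ it may also contain the points $u_2^1(j)$, which have $v_1$-pairing $1$. In the second case it consists of all elements of $E$ with $\langle w,v_1\rangle\le 2$.

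\textbf{Step 1: the case $\ell_1>2$, $\langle u,v_1\rangle=-1$.} I claim $L_1=0$ here, so the hypothesis of the lemma cannot hold. A relation $q$ with $\bar q+u\notin S_1$ must satisfy $\langle\bar q,v_1\rangle\le 1$. Hence each side of $q$ contains at most one element on the line $\langle\cdot,v_1\rangle=1$, together with multiples of the only element of $E\cap\rho_1^\perp$, namely $\ell_1w_0$.
\begin{itemize}
\item If there is no line-$1$ element, the relation is a multiple of $\ell_1w_0$ set equal to $0$, which is impossible in $E$ since there is no $-\ell_1w_0$.
\item Otherwise the relation reads $x+c\,\ell_1 w_0=y$ with $x,y$ distinct line-$1$ elements of $E^{u,1}$ and $c\ge 0$. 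Distinct lattice points on this line differ by a nonzero multiple of $w_0$, so $c\ge 1$. But the $u_1^1(i)$ in $E$ have indices $0\le i\le\ell_1-1$, whose differences are less than $\ell_1$ in absolute value. The exceptional case $s=1$ must be checked by hand in the same way.
\end{itemize}
So only the case $\ell_1=2$ remains. By symmetry the same holds for $U_2$, so whenever $(T^1_{U_2})_u\neq 0$ we may assume $\ell_2=2$ as well.

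\textbf{Step 2: the case $\ell_1=2$, $\langle u,v_1\rangle=-2$.} Here the computation in the proof of Lemma~\ref{lem:t1}, transported via Lemma~\ref{lem:quotients-under-localisation}, shows that $L_1$ is a hyperplane in $\Rel_k(E^{u,1})$. It is the kernel of the linear form $\varphi_1$ picking out the coefficient of the relation
\[
2w_0+2u_1^1(0)=2u_1^1(1).
\]
For $L_2$ there are two possibilities. Either $(T^1_{U_2})_u=0$, and then $L_2=\Rel_k(E^{u,2})$; or $\ell_2=2$, and then $L_2=\ker\varphi_2$ is likewise a hyperplane. In either case $L_1+L_2\subset \Rel_k(E^u_S)$, and by Lemma~\ref{lem:S2-T1} it suffices to show that this inclusion is an equality.

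\textbf{Step 3: the dimension count.} By \eqref{eq:surface-lemma-dim-formula} we need
\[
\dim L_1+\dim L_2-\dim(L_1\cap L_2)\ \ge\ \# E^u_S-2 .
\]
I will instead exhibit, for each element of $E^u_S$ outside $E^{u,1}$ (resp.\ $E^{u,2}$), an explicit relation in $L_2$ (resp.\ $L_1$) having that element as leading coordinate, in the style of the ``Subcase $\ell>2$, $a=-2$'' argument. The nontrivial intersection will provide one extra relation, a nonzero $q\in L_1\cap L_2$, which has no leading coordinate outside $E^{u,1}\cap E^{u,2}$. I will show that such a $q$ forces $\langle u,v_2\rangle$ to be sufficiently negative that $E^{u,1}\cap E^{u,2}$ is all of $E^{u,1}$. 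Combined with the hyperplane description, this gives the needed rank.

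I expect Step 3 to be the main obstacle. The bookkeeping depends on $s$, on whether $\tilde X\cong\A^2$ (where $E$ contains the artificial generator $w_0+w_1$), and on the degenerate configurations with $\ell_2\in\{1,2\}$. My first attempt will be to handle $\tilde X\cong \A^2$ and the case $s\ge 2$ separately, in the latter of which the two rays share few generators.
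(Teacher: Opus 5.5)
You have two genuine gaps.

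\textbf{Step 1 fails in the configuration you set aside.} Away from that configuration, your argument is a correct and more explicit version of the paper's dimension count.

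With the paper's convention $u_1^1(m)=w_1+(m-1)w_0$, the elements of $E$ on the line $\langle\cdot,v_1\rangle=1$ are $u_1^1(1),\dots,u_1^1(\ell_1)$. The range $0\le i\le\ell_1-1$ you quote is an index shift inherited from Lemma~\ref{lem:generating-set}.

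For $s=1$, the points $w_1+jw_2$ with $j\ge1$ have $v_1$-pairing at least $2$, so they are irrelevant. The only additional element of pairing $\le 1$ is $\ell_2g_2=g_2$ when $\tilde X\cong\A^2$ and $\ell_2=1$, and then $g_2=u_1^1(0)$. So the line-$1$ elements are $\ell_1+1$ consecutive points, and your ``differences less than $\ell_1$'' argument breaks.

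Concretely, take $u=-u_1^1(j)$ with $\ell_1\nmid j$. The relation $q\colon g_2+\ell_1g_1=u_1^1(\ell_1)$ lies in $\Rel(E^{u,1})$ and has $\bar q+u=(\ell_1-j)g_1\notin S_1$, so $L_1=k\cdot q\neq 0$. If moreover $j>\ell_1$, every element of $E$ lies in $E^{u,2}$, and $(T^1_{U_2})_u=0$ because $\ell_2=1$. Hence $q\in L_2=\Rel_k(E)$. For example, with $\ell_1=3$ and $j=4$ the hypotheses of the lemma genuinely hold with $\ell_1>2$.

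So ``only the case $\ell_1=2$ remains'' is wrong. This case must be closed via $L_2=\Rel_k(E^u_S)$ and Lemma~\ref{lem:S2-T1}, not by showing that the intersection is zero. (The paper's own remark on this configuration, that $L_1$ is supported on $\ell_1g_1\notin E^{u,2}$, is likewise only valid for $j\le\ell_1$.)

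\textbf{Step 3 is not yet an argument.} Its key claim, that a nonzero $q\in L_1\cap L_2$ forces $E^{u,1}\subseteq E^{u,2}$, is essentially the content of the lemma. Even granting it, you would still need $L_1\not\subseteq L_2$ whenever $L_2$ is a hyperplane.

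The missing idea is the paper's reduction. First, $L_1\cap L_2\subseteq\Rel_k(E^{u,1}\cap E^{u,2})$, and this vanishes unless $-u\in\relint\sigma^\vee$. So for $\ell_1=2$ one has $u=-2u_1^1(m)$ with $m\ge 1$.

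If $m\ge2$, then $\langle u,v_2\rangle<-2$, so $(T^1_{U_2})_u=0$ and $E^{u,2}=E^u_S$. Thus $L_2$ alone is already $\Rel_k(E^u_S)$.

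If $m=1$, then $\#E^{u,1}\le 4$, so $\dim L_1\le 1$, and any nonzero element of $L_1$ involves $\ell_1g_1$. This element lies outside $E^{u,2}$, making the intersection zero, except when $\tilde X\cong\A^2$ and $\ell_2\ge 3$. In that exceptional case one again has $(T^1_{U_2})_u=0$ and $E^{u,2}=E^u_S$; the paper also passes over this exception.

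In particular, the two-hyperplane situation ($s=1$, $u=-2w_1$, $\ell_1=\ell_2=2$) never has a nonzero intersection, and no leading-coordinate bookkeeping is needed.
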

\begin{proof}
  The intersection  is contained in $\Rel_k(E^{u,1} \cap E^{u,2})$, but $E^{u,1} \cap E^{u,2}$ is empty if $-u \notin \relint\sigma^\vee$. Hence, we may assume
  $-u \in \relint\sigma^\vee$.
  
  We distinguish two cases.
  
  \textbf{Case  $\langle v_1,u \rangle = -1$:}
  In general for $u=-u_{1}^{1}(j)$ we have 
  \[E^{u,1} = \{u_{1}^{0}(1)\} \cup \{ u_{1}^{1}(i) \mid 1 \leq i < \ell_1, i \not\equiv j \;(\text{mod } \ell_1)\}.\]
  In particular, we have $\# E^{u,1}=\ell_1$ and $\dim\Rel_k(E^{u,1})=\ell_1-2$. Since, $\dim (T^1_{U_1})_u=\ell_1-2$
  by Lemma~\ref{lem:t1} it follows via Proposition~\ref{prop:quotients-under-localisation} that \[\langle q \in \Rel(E^{u,1}) \mid \bar q+u \notin S_1\rangle_k\] vanishes. If $\langle v_1, \ell_2 g_2 \rangle =1$ then additionally $g_2$ can be contained in $E^{u,1}$. In this case the same argument shows that $\dim \langle q \in \Rel(E^{u,1}) \mid \bar q+u \notin S_1\rangle_k =1$ and every element of $\langle q \in \Rel(E^{u,1}) \mid \bar q+u \notin S_1\rangle_k$ is necessarily supported in $g_1 \notin E^{u,2}$. Hence, once again the claim follows.
  
  \textbf{Case  $\langle v_1, u \rangle = -2$, $\ell_1=2$ and $u \in 2M$:} We distinguish two subcases. The first one is $u=-2u_{1}^{1}(1)$. Here, $E^{u,1}$ contains $u_{1}^{0}(1)$, $u_{1}^{1}(1)=w_1$, $u_{1}^{1}(2)=w_1+w_0$ and if $a_1=1$ then also $w_2$. In any case $\# E^{u,1} \leq 4$ holds and since $\dim (T_{U_1}^1)_u=1$, we have
  \(
    \dim \langle q \in \Rel(E^{u,1}) \mid \bar q+u \notin S_1\rangle_k = \# E^{u,1}-2-1 \leq 1
  .\)
  As in the previous case it follows that every element of $\langle q \in \Rel(E^{u,1}) \mid \bar q+u \notin S_1\rangle_k$ is supported in $g_1 \notin E^{u,2}$ and, therefore, the claim follows.

  The second subcase is $u=-2u_{1}^{1}(m)$ with $m \geq 2$. Then $\langle v_2, u\rangle < -2$ and therefore
  $(T^1_{U_2})_u$ vanishes. Hence, $\langle q \in \Rel(E^{u,2}) \mid \bar q+u \notin S_2\rangle_k=\Rel_k(E^{u,2})$.
  On the other hand, we have $E^{u,2}=E^u_S$ in this situation. Hence, we conclude $(T^1_X)_u=0$ by Lemma~\ref{lem:S2-T1}.
\end{proof}

\begin{remark}
  \label{rem:correction-terms}
  We may use Lemma~\ref{lem:t1} to determine $\dim(T^1_{U_i})_u$ and count the elements in $E^{u,i}$ to determine 
  $c_{E^{u,i}}$.  We obtain in this way
  \begin{equation}
    \label{eq:correction-terms}
    \dim(T^1_{U_i})_u- c_{E^{u,i}}=
    \begin{cases}
      -1  &\langle u, v_i\rangle = 0, u \notin \ell_iM\\
      -2  &\langle u, v_i\rangle = 0, u \in \ell_iM\\
%      \midrule
      \ell_i-2  &\langle u, v_i\rangle = -1\\
      1 & \langle u, v_i \rangle = -2, u \in 2M, \ell_i=2\\
  %    \midrule
      -2  &\langle u, v_i\rangle > 0\\
      0 & \text{otherwise.}
    \end{cases}
  \end{equation}
\end{remark}

\begin{lemma}
  \label{lem:intersection}
  For $u \in M$ we have
  \begin{equation}
    \label{eq:intersection}
    \# \bigcap_{i=1,2}E^{u,i}  =
    \begin{cases}
      0 & u = -u_{i}^{0}(m), m \leq \ell_i \text{ or } m \in \ell_i\ZZ\\
      1 & u = -u_{i}^{0}(m), m > \ell_i, m \notin \ell_i\ZZ\\ 
      %\midrule
      0 & u = -u_{i}^{1}(m),   m < 1\\
      m-1 & u = -u_{i}^{1}(m), 1\leq  m \leq \ell_i\\
      \ell_i& u = -u_{i}^{1}(m),  m \geq \ell_i+1 \\
      %\midrule
      0 & u=-w_i, 1 \leq i \leq s\\
      1 & u=-mw_i, 1 \leq i \leq s, 2 \leq m \leq a_i-1\\
      0 & u \notin -\sigma^\vee.
    \end{cases}
  \end{equation}
  For all remaining values of $u$ we have $\# \bigcap_{i=1,2} E^{u,i} \geq 2$, with the following exceptions.
  \begin{enumerate}[(a)]
  \item If $s=1$, $u=-a_1w_1, \ell_1,\ell_2 \geq 2$ or  $u=-a_1w_1, \ell_1=\ell_2 = 1$ then 
    $\# \bigcap_{i=1,2}E^{u,i}=1$ holds.
    \label{item:exception-multiple}
  \item If $\tilde X \cong \A^2$, $u=-u_{2}^{1}(\ell_2), \ell_1 \neq  1$ or  $u=-u_{1}^{1}(\ell_1), \ell_2 \neq  1$, then  the number increases by $1$ compared to (\ref{eq:intersection}).
    \label{item:exception-smooth-perp}
  \item If $\tilde X\cong \A^2$, $u=-u_{2}^{1}(m), \ell_1 = 1,m \notin \ell_2\ZZ$ or  $u=-u_{1}^{1}(m), \ell_2 = 1, m \notin \ell_1\ZZ$, then the number increases by $1$ compared to (\ref{eq:intersection})
    \label{item:exception-smooth-parallel}
  \end{enumerate}
\end{lemma}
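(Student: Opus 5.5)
The intersection $E^{u,1}\cap E^{u,2}$ consists of those $w\in E$ with $u+w\notin S_1$ and $u+w\notin S_2$. The first step is therefore to describe $S_i$ concretely. By seminormality and the description of $U_i$ (cf. Lemma~\ref{lem:t1}), we have
\[
S_1=\{u'\in M \mid \langle u',v_1\rangle>0\}\cup (G_{\rho_1}),
\]
where $G_{\rho_1}=\ZZ\ell_1 w_0$ is the sublattice of $\rho_1^\perp\cap M=\ZZ w_0$ of index $\ell_1$; the description of $S_2$ is analogous. Hence $w\in E^{u,1}\cap E^{u,2}$ if and only if, for each $i$, either $\langle u+w,v_i\rangle<0$, or $\langle u+w,v_i\rangle=0$ and $u+w\notin \ZZ\ell_i g_i$. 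In particular $E^{u,1}\cap E^{u,2}=\emptyset$ unless $\langle u+w,v_i\rangle\le 0$ for both $i$ and some $w\in E$. This forces $-u$ to lie in $\sigma^\vee$ up to a generator, and in fact the intersection can only be nonempty when $-u\in\sigma^\vee$, since every $w\in E$ lies in $\sigma^\vee$. This gives the line $u\notin-\sigma^\vee$.

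For $-u\in\sigma^\vee$ I write $-u=x$ and count the generators $w\in E$ with $w\le x$ in the sense that $x-w\in\sigma^\vee$, and I check the boundary conditions. The key observation is that $w$ lies in the intersection exactly when $x-w\in\sigma^\vee$ and $x-w$ does not lie in $S_1\cap S_2\cup\ldots$; more precisely, when $x-w\notin \relint\sigma^\vee$ and the lattice conditions on the boundary rays fail. So $w$ counts if and only if $x-w$ lies on a ray $\rho_i^\perp\cap\sigma^\vee$ and is not in $\ZZ\ell_i g_i$ (for $x-w\ne0$), or $x-w=0$. Indeed, $x-w=0$ gives $u+w=0\notin\ldots$. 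Here one has to be careful: $0\in G_{\rho_i}$, so $0\in S_i$, and $w=x$ does \emph{not} count.

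I would then go through the cases according to where $x$ sits, using the explicit list of $E$ from Lemma~\ref{lem:generating-set}. If $x=u_i^0(m)=mg_i$, then $x-w$ is on a ray only for $w$ on the same ray, i.e.\ $w=\ell_ig_i$; this contributes when $m>\ell_i$ and $(m-\ell_i)\notin\ell_i\ZZ$. The other elements $u_i^1(j)$ are not below $x$. If $x=u_i^1(m)=w_1+(m-1)g_1$, the candidates are $u_1^1(j)$ with $j<m$ and $j\not\equiv m$ modulo $\ell_1$, together with $\ell_1g_1$ when $x-\ell_1g_1$ lies on $\rho_2^\perp$. The latter happens only in the smooth case (exception~\ref{item:exception-smooth-perp}), or when the residue condition is altered because $\ell_2=1$ (exception~\ref{item:exception-smooth-parallel}). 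Counting $0\le j\le \ell_1-1$ with $j\le m-1$ and $j\not\equiv m$ gives $m-1$ for $m\le\ell_i$ and $\ell_i$ beyond. For $x=mw_i$ with $1\le i\le s$, the differences $x-w$ lying on the boundary rays come from the continued fraction relations $w_{i-1}+w_{i+1}=a_iw_i$. For $x=w_i$ there are none. For $2\le m\le a_i-1$, exactly one generator $w$ with $x-w$ on a ray occurs, namely $w_{i\pm1}$ in the case $s$ small, or more generally a single one. When $m=a_i$ and $s=1$, both $w_0$ and $w_2$ direction generators appear, which yields exception~\ref{item:exception-multiple} depending on the $\ell_i$.

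Finally, for all remaining $x$ in $\sigma^\vee$, I would exhibit two distinct generators $w$ with $x-w$ on a boundary ray outside the sublattices, e.g.\ $u_1^1(j)$ and $u_2^1(j')$ for suitable residues. The main obstacle is this lower bound together with the bookkeeping of the degenerate small cases ($s\le 1$, $\tilde X\cong\A^2$, $\ell_i\in\{1,2\}$), where generators coincide (e.g.\ $w_1=u_2^1(1)$ when $s=1$). I would handle these by direct inspection of the few lattice points near the apex.
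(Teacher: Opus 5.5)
Your opening reduction is right: with $x=-u$, an element $w\in E$ lies in $E^{u,1}\cap E^{u,2}$ if and only if, for each $i$, either $\langle u+w,v_i\rangle<0$, or $\langle u+w,v_i\rangle=0$ and $u+w\notin\ZZ\ell_ig_i$. Your ``key observation'' then contradicts this. Since $u+w=-(x-w)$, whenever $x-w\in\relint\sigma^\vee$ both pairings $\langle u+w,v_i\rangle$ are negative, so $w$ \emph{does} lie in the intersection. You tested whether $x-w$, instead of $w-x$, lies in $S_i$.

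The correct rule is: $w$ counts if and only if $0\neq x-w\in\sigma^\vee$ and $x-w\notin\ZZ\ell_1g_1\cup\ZZ\ell_2g_2$. Interior differences always count, and they are the main source of elements. Your rule fails concretely. Take $\sigma^\vee$ spanned by $(1,0)$ and $(1,5)$ (so $w_i=(1,i)$, $s=4$), $\ell_1=\ell_2=2$ and $x=w_2+w_3=(2,5)$. No difference $x-w$ with $w\in E$ is a nonzero boundary point, so you would get $0$, while the intersection is $\{w_1,w_2,w_3,w_4\}$. In particular, your plan to get the lower bound $\geq 2$ from two boundary differences cannot work. What is needed, as in the paper, is to write $x=\lambda w_j+\mu w_{j+1}$ with $\lambda,\mu>0$ and exhibit two generators with admissible differences, mostly interior ones.

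The same error invalidates several case computations; you reach the stated values only through unjustified claims.

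For $x=mw_i$ with $2\le m\le a_i-1$, the single element is $w_i$ itself, via the interior difference $(m-1)w_i$. Your candidates $w_{i\pm1}$ are not even below $x$. For example, $mw_i-w_{i+1}=w_{i-1}-(a_i-m)w_i$ pairs negatively with $v_1$, because $\langle w_j,v_1\rangle$ is strictly increasing in $j$. Your rule would therefore give $0$ here.

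For $x=u_1^1(m)$ with $m\ge\ell_1+1$, the element $\ell_1g_1$ counts because $x-\ell_1g_1=u_1^1(m-\ell_1)$ is interior. Your rule admits it only when $x-\ell_1g_1\in\rho_2^\perp$, and your own enumeration then yields $\ell_1-1$, not $\ell_1$. Also, the generators on this line are $u_1^1(n)=w_1+(n-1)w_0$ with $1\le n\le\ell_1$. With your range $0\le j\le \ell_1-1$, the count for $m<\ell_1$ would be $m$ rather than $m-1$.

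In exception (a) (with $a_1\ge 2$), the element that is always present is $w_1$, via the interior difference $(a_1-1)w_1$. A second one appears exactly when one of $\ell_1,\ell_2$ equals $1$ and the other does not, since then $x-\ell_1g_1=g_2$ or $x-\ell_2g_2=g_1$ is an admissible boundary difference.

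Once you replace your criterion by the correct one, your case-by-case strategy is the same as the paper's and goes through.
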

\begin{proof}
  The statements in the case $u \notin \sigma^\vee$ is clear. For  $u=u_{i}^{0}(m)$ and $m > \ell_i, m\notin \ell_i\ZZ$ the intersection consists exactly of the element $\ell_i g_i$. For $m < \ell_i$ or $m \in \ell_i \ZZ$ the  intersection is empty.
  
  An element $-u \in \relint\sigma^\vee \cap M$ can be expressed as positive integral linear combination $\lambda_i w_j + \lambda_{j+1}w_{j+1}$ of two consecutive elements of the generating set $\bar E$. If both coefficients are non-zero we have $\# \bigcap_{i=1,2}\bar E^{u,i} \geq 2$. Indeed, either $w_j,w_{j+1}$, $w_1,w_0+w_1$ or $w_s,w_s+w_{s+1}$ are contained in the intersection. We now consider multiples of the elements $w_1,\ldots,w_{s}$. For $-u=mw_j$ we have $\bigcap_{i=1,2} E^{u,i}=\emptyset$ for $m=1$ and  $\bigcap_{i=1,2}E^{u,i}=\{w_j\}$ for $2\leq m \leq a_j-1$. If we are in condition in (\ref{item:exception-multiple}), then we also have  $\bigcap_{i=1,2}E^{u,i}=\{w_j\}$. Otherwise for $-u=a_jw_j$ at least one of the elements $w_{j-1}$, $w_{j+1}$, $w_0+w_1$, $w_s+w_{s+1}$ will be also contained in the intersection. For $m > a_j$ we have $\#\bigcap_{i=1,2} E^{u,i} \geq 3$, since $w_{j-1},w_j,w_{j+1}$  (or $w_0+w_1,w_1,w_2$ or $w_{s-1},w_s,w_s+w_{s+1}$, respectively) are contained in the intersection.

  We now consider the case $u=u_{1}^{1}(m)$.  The intersection
  \[E^{u,1} \cap E^{u,2}
    = \begin{cases}
        \{u_{1}^{1}(n) \mid 1 \leq n  \leq  m\} & m \leq \ell_1\\
        \{u_{1}^{1}(n) \mid n  < \ell_1, n \not\equiv m \; (\text{mod } \ell_1)\} \cup \{u_{1}^{0}(\ell_1)\} & m \geq \ell_1
    \end{cases}
  \]
  has the claimed number of elements.  In the exceptional cases (\ref{item:exception-smooth-perp}) and (\ref{item:exception-smooth-parallel}) the element  $u_{1}^{0}(\ell_1)$ or $u_{2}^{0}(\ell_2)$, respectively,  is additionally contained in the intersection, since we have $u_{1}^{1}(m)=u_{2}^{0}(1) +m\cdot u_{1}^{0}(1)$ in case that $\tilde X\cong \A^2$. Hence, the cardinality of the intersection increases by $1$.  The argument for $u=-u_{2}^{1}(m)$ is  the same.
\end{proof}

  \begin{proof}[Proof of Theorem~\ref{thm:T1dim-surface}]
    According to Lemma~\ref{lem:surface-T1-from-intersection} the dimension of $(T^1_X)_u$ is given by (\ref{eq:dimT1-intersection}).   The values for the correction terms $\dim(T^1_{U_i})_u- c_{E^{u,i}}$ for $i=1,2$ are listed in (\ref{eq:correction-terms}) in Remark~\ref{rem:correction-terms}  and apart from the four exceptional cases the cardinalities $\bigcap_{i=1,2}E^{u,i}$ can be found in (\ref{eq:intersection}) in Lemma~\ref{lem:intersection}. It is tedious, but straightforward to check that combining both results in (\ref{eq:dimT1-surfaces}). Let us just comment on the cases $u=-u_{1}^{1}(m)$ with $2 \leq m  \leq \ell_1$. Here, the expression in (\ref{eq:dimT1-intersection}) becomes
         \[2+ (\ell_1 -2) +0 - m+1=(\ell_1-m+1)+0+0=d_1(u)+d_2(u)+\tilde d(u).\]
For $m=1$, but $s > 1$ (in this case we have $\langle u_{1}^{1}(1), v_2 \rangle < -1$) we get 
         \[2+ (\ell_1 -2) +0 -0=(\ell_1-1)+0+1=d_1(u)+d_2(u)+\tilde d(u).\]
For $m=1$ and $s=1$ we have $u_{1}^{1}(1)=u_{2}^{1}(1)$ and  the expression in (\ref{eq:dimT1-intersection}) gives
\[2+ (\ell_1 -2) +(\ell-2) -0=(\ell_1-1)+(\ell_2-1)+0=d_1(u)+d_2(u)+\tilde d(u).\]
We leave the verification of the remaining cases to the reader.  Note, that the exceptional cases \ref{item:exception-multiple})-(\ref{item:exception-smooth-parallel}) in Lemma~\ref{lem:intersection} directly translate to the Rules~\ref{item:singular-exception-multiple})-(\ref{item:smooth-exception-li=1}) in Theorem~\ref{thm:T1dim-surface}. Let us only comment on Rule~(\ref{item:smooth-exception-l1=l2=2}). For $u=-2w_1=-2w_s$
combining (\ref{eq:dimT1-intersection}) and (\ref{eq:correction-terms}) gives
\begin{equation}
     \dim((T^1_X)_u)=\max\left\{0,\; 2+ \#\{i \mid \ell_i=2\}-\# \bigcap_{i=1,2}E^{u,i}\right\}.\label{eq:dimT1-2w1}
   \end{equation}
   For $\ell_2, \ell_2\geq 2$ we have  \( \bigcap_{i=1,2}E^{u,i}=\{w_1+w_0,w_1,w_1+w_2\}\). Hence, according
   to (\ref{eq:dimT1-2w1}) we get $\dim((T^1_X)_u)=0$ except for $\ell_2=\ell_2=2$. In this case we get $\dim((T^1_X)_u)=2+2-3=1$.
   We have   \(\bigcap_{i=1,2}E^{u,i}=\{w_1,w_2\}\)  for $\ell_1=1, \ell_2=2$ and  \(\bigcap_{i=1,2}E^{u,i}=\{w_0,w_1\}\) for $\ell_1=2, \ell_2=1$.
   In both cases we obtain $\dim((T^1_X)_u)=2+1-2=1$  according to (\ref{eq:dimT1-2w1}).
   It remains to consider the case $\ell_1,\ell_2 \neq 2$, but then (\ref{eq:dimT1-2w1}) gives $\dim((T^1_X)_u)=0$, since we have \(\#\bigcap_{i=1,2}E^{u,i} \geq 2\) by Lemma~\ref{lem:intersection}.
 \end{proof}

\printbibliography
\end{document}